\theoremstyle{plain}
\newtheorem{thm}{Theorem}[section]
\newtheorem{prop}[thm]{Proposition}
\newtheorem*{propnonumber}{Proposition}
\newtheorem*{main}{Main Theorem}
\newtheorem{lemma}[thm]{Lemma}
\newtheorem{cor}[thm]{Corollary}
\newtheoremstyle{underline}
{}        
{}              
{}              
{}    
{\large}              
{:}             
{1mm}         
{{\underline{\thmname{#1}\thmnumber{ #2}}}}  
\theoremstyle{underline}
\newtheorem*{claim*}{Claim}
\theoremstyle{definition}
\newtheorem{defi}[thm]{Definition}
\theoremstyle{remark}
\newtheorem{remark}[thm]{Remark}
\newtheorem{ex}[thm]{Example}
\newtheorem*{ack}{Acknowledgements}
\newcommand*{\dt}[1]{%
	\accentset{\mbox{\large\bfseries .}}{#1}}
\definecolor{forest}{rgb}{0,0.5,0}
\begin{document}
	
	\title[On deformations of coisotropic submanifolds with fixed foliation]{On deformations of coisotropic submanifolds with fixed characteristic foliation}
	\author{Stephane Geudens}
	\address{University College London, Department of Mathematics, 25 Gordon Street, London WC1H 0AY, United Kingdom}
	\email{s.geudens@ucl.ac.uk}
	
	\begin{abstract}
		It is well-known that the deformation problem of a compact coisotropic submanifold $C$ in a symplectic manifold is obstructed in general. We show that it becomes unobstructed if one only allows coisotropic deformations whose characteristic foliation is diffeomorphic to that of $C$. This extends an unobstructedness result in the setting of integral coisotropic submanifolds due to Ruan.
	\end{abstract}
	
	\maketitle
	
	\setcounter{tocdepth}{1} 
	\tableofcontents

	\section*{Introduction}
	
	Lagrangian submanifolds are fundamental objects in symplectic geometry with a simple deformation theory. Given a Lagrangian submanifold $L\subset(M,\omega)$, Weinstein's Lagrangian neighborhood theorem \cite{lagr} states that one can identify $(M,\omega)$ with the cotangent bundle $(T^{*}L,\omega_{can})$ around $L$. This implies that deformations of $L$ correspond with small closed one-forms on $L$. Hence, the Lagrangian deformation problem is linear and unobstructed.
	
	Coisotropic submanifolds encompass the Lagrangian ones, and these submanifolds show up naturally in various contexts (e.g. zero level sets of moment maps, first class constraints in mechanics). A coisotropic submanifold $C\subset(M,\omega)$ carries a characteristic foliation $\mathcal{F}$, which plays a key role in the deformation problem of $C$. Gotay's theorem \cite{gotay} provides an extension of Weinstein's Lagrangian neighborhood theorem, showing that one can identify $(M,\omega)$ with $(U,\Omega_G)$ around $C$. Here $U$ is a neighborhood of the zero section of $T^{*}\mathcal{F}$, and $\Omega_G$ is a symplectic form constructed out of a complement $G$ to $T\mathcal{F}$. In contrast to the Lagrangian case, the coisotropic sections of $(U,\Omega_G)$ are cut out by a highly non-linear equation, which is actually the Maurer-Cartan equation of a suitable $L_{\infty}$-algebra \cite{oh-park},\cite{fiberwise}. Moreover, the coisotropic deformation problem is obstructed in general, meaning that there may exist first order deformations which are not tangent to any path of deformations \cite{marco}.
	
	\vspace{0.2cm}
	
	In this note, we revisit the obstructedness problem for compact coisotropic submanifolds. It appears that obstructedness of the coisotropic deformation problem of $C$ is intimately related with instability of its characteristic foliation. This is suggested by the following two results, going in opposite directions:
	\begin{itemize}
		\item Zambon constructed an obstructed example in \cite{marco}, featuring a compact coisotropic submanifold $C$ with arbitrarily $\mathcal{C}^{1}$-small deformations whose characteristic foliation is not diffeomorphic to that of $C$.
		\item A compact coisotropic submanifold is called integral if its characteristic foliation $\mathcal{F}$ is given by the fibers of a fiber bundle $C\rightarrow C/\mathcal{F}$. Ruan showed that the deformation problem of an integral coisotropic submanifold --within the class of such submanifolds-- is unobstructed \cite{ruan}.
	\end{itemize}
	These results are our motivation to look at a restricted version of the coisotropic deformation problem of $C$, which only allows deformations whose characteristic foliation is diffeomorphic to that of $C$. In other words, we deform $C$ within the class
	\[
	\text{Def}_{\mathcal{F}}(C):=\big\{C'\subset(M,\omega)\ \text{coisotropic}:\ \exists\ \text{foliated diffeomorphism}\ (C,\mathcal{F})\overset{\sim}{\rightarrow}(C',\mathcal{F}')\big\}.
	\]
	Our main result states that this deformation problem is unobstructed (see Thm.~\ref{thm:main}).
	
	\begin{main}
		Let $C\subset(M,\omega)$ be a compact coisotropic submanifold with characteristic foliation $\mathcal{F}$. The deformation problem of $C$ within $\text{Def}_{\mathcal{F}}(C)$ is unobstructed.
	\end{main}
	
	We would like to interpret this unobstructedness result as follows. Zambon's example \cite{marco} shows that the space of compact coisotropic submanifolds of a symplectic manifold $(M,\omega)$ is not smooth in general. More precisely, it may fail to be a Fr\'{e}chet submanifold of the Fr\'{e}chet manifold consisting of all compact submanifolds of $M$. However, if we define an equivalence relation on the space of compact coisotropic submanifolds of $(M,\omega)$, declaring $(C,\mathcal{F})$ and $(C',\mathcal{F}')$ to be equivalent if there exists a foliated diffeomorphism between them, then our Main Theorem states that the resulting equivalence classes are smooth, in a loose way. It would be interesting to know if this can be made sense of in a more precise way.

	\bigskip
	
	\textbf{Overview of the paper.}
	
	Section~\ref{sec:one} collects some background information about coisotropic submanifolds and the Gotay normal form. We recall that, as a consequence of this normal form, first order deformations of a coisotropic submanifold $C$ are leafwise closed one-forms on its characteristic foliation $\mathcal{F}$. With the aim of this note in mind, we also review the well-known fact that first order deformations of the foliation $\mathcal{F}$, modulo those induced by isotopies, are given by the first foliated cohomology group with values in the normal bundle $H^{1}(\mathcal{F};N\mathcal{F})$. 
	
	\vspace{0.2cm}
	
	In Section~\ref{sec:two}, we argue what are the appropriate first order deformations when deforming a coisotropic submanifold $C$ inside $\text{Def}_{\mathcal{F}}(C)$. By the above, these are closed foliated one-forms which in a way should give rise to a trivial class in $H^{1}(\mathcal{F};N\mathcal{F})$. This assignment is achieved by a tool which is central in this paper, namely a canonical transverse differentiation map
	\[
	d_{\nu}:H^{1}(\mathcal{F})\rightarrow H^{1}(\mathcal{F};N^{*}\mathcal{F}).
	\]
	We should note here that since $\mathcal{F}$ is transversely symplectic, there is a canonical isomorphism $H^{1}(\mathcal{F};N\mathcal{F})\cong H^{1}(\mathcal{F};N^{*}\mathcal{F})$. Hence, the map $d_{\nu}$ does indeed give a way to relate first order deformations of $C$ --as a coisotropic submanifold-- with first order deformations of its foliation $\mathcal{F}$. We obtain the following description for the first order deformations of $C$ within the class $\text{Def}_{\mathcal{F}}(C)$ (see Lemma~\ref{inf} and Prop.~\ref{prop:rephrase-canonical}).
	
	\begin{propnonumber}
		Let $C\subset(M,\omega)$ be a compact coisotropic submanifold. When deforming $C$ inside $\text{Def}_{\mathcal{F}}(C)$, a first order deformation is a closed foliated one-form $\beta\in\Omega^{1}(\mathcal{F})$ whose cohomology class lies in the kernel of $d_{\nu}:H^{1}(\mathcal{F})\rightarrow H^{1}(\mathcal{F};N^{*}\mathcal{F}).$
	\end{propnonumber}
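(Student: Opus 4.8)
The plan is to separate the two conditions that define membership of $\text{Def}_{\mathcal{F}}(C)$, namely being coisotropic and having characteristic foliation diffeomorphic to $\mathcal{F}$, and to linearize each of them at $C$. Working inside the Gotay normal form $(U,\Omega_G)\subset T^{*}\mathcal{F}$, I would represent a deformation of $C$ by a path of sections $s_t\in\Gamma(U)$ with $s_0=0$, whose graphs $C_t=\mathrm{graph}(s_t)$ are coisotropic, and set $\beta:=\tfrac{d}{dt}\big|_{t=0}s_t\in\Omega^{1}(\mathcal{F})$. The first step is to recall that coisotropic sections are cut out by the Maurer--Cartan equation of the governing $L_{\infty}$-algebra, whose linear part is the foliated differential; hence the linearized coisotropic condition reads $d_{\mathcal{F}}\beta=0$, so that $\beta$ is a closed foliated one-form defining a class $[\beta]\in H^{1}(\mathcal{F})$. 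This is exactly the first-order coisotropic information already extracted in Section~\ref{sec:one}.

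The second step is to convert the requirement that $\mathcal{F}_t$ be diffeomorphic to $\mathcal{F}$ into a linear condition on $\beta$. Each coisotropic graph carries its characteristic foliation $\mathcal{F}_t=\ker(\Omega_G|_{C_t})$; transporting these back to the fixed manifold $C$ along the graph projections yields a genuine deformation of the foliation $\mathcal{F}$ on $C$, whose infinitesimal part is a class in $H^{1}(\mathcal{F};N\mathcal{F})$ by the deformation theory of foliations recalled earlier. The existence of foliated diffeomorphisms $(C,\mathcal{F})\overset{\sim}{\to}(C_t,\mathcal{F}_t)$ forces this infinitesimal foliation deformation to be induced by an isotopy, hence to represent the trivial class in $H^{1}(\mathcal{F};N\mathcal{F})$. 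Thus staying in $\text{Def}_{\mathcal{F}}(C)$ at first order is equivalent to the vanishing of this foliation-variation class.

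The heart of the argument is then to identify the assignment sending $\beta$ to the infinitesimal variation of $\mathcal{F}_t$ with the transverse differentiation map $d_{\nu}$. I would differentiate $\Omega_G|_{\mathrm{graph}(s_t)}$ in $t$ and read off how the leafwise-degenerate directions tilt into the transverse (fiber) directions of $T^{*}\mathcal{F}$; this tilt is precisely the first-order motion of the characteristic kernel, a class in $H^{1}(\mathcal{F};N\mathcal{F})$, and the transverse symplectic form converts it, under the canonical isomorphism $H^{1}(\mathcal{F};N\mathcal{F})\cong H^{1}(\mathcal{F};N^{*}\mathcal{F})$, into $d_{\nu}[\beta]$. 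Granting this identification, the vanishing of the foliation deformation from the second step becomes exactly $d_{\nu}[\beta]=0$, which together with the first step gives the stated characterization. I expect this identification to be the main obstacle: it requires an explicit model for how the characteristic distribution of a nearby coisotropic graph deforms, and care that the transverse derivative --which on the level of forms depends on the auxiliary complement $G$ or a choice of connection-- descends to the canonical, choice-independent map $d_{\nu}$ on foliated cohomology.
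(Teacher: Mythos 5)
Your plan is correct and follows essentially the same route as the paper: linearize the coisotropic condition to get $d_{\mathcal{F}}\beta=0$ (the paper does this directly from Prop.~\ref{F} rather than via the Maurer--Cartan equation, but the conclusion is identical), write the deformed characteristic kernels as graphs over $T\mathcal{F}$, use the smooth family of foliated diffeomorphisms together with Heitsch's result to conclude the infinitesimal foliation deformation is $d_{\nabla}$-exact, and identify the resulting "tilt" map with $d_{\nu}$ under the isomorphism $\omega_C^{\flat}:N\mathcal{F}\cong N^{*}\mathcal{F}$. The obstacle you flag at the end is exactly what the paper resolves with the map $\Phi$ of Def.~\ref{phi} and Prop.~\ref{prop:rephrase-canonical}, whose proof (via the bigraded decomposition $d=d_{0,1}+d_{1,0}+d_{2,-1}$) shows that $\Phi$ is a chain map inducing the canonical, $G$-independent map $d_{\nu}$ in cohomology.
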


	First order deformations have the following geometric interpretation (see Lemma~\ref{lem:extension}): they are exactly the foliated one-forms $\beta\in\Omega^{1}(\mathcal{F})$ admitting an extension $\widetilde{\beta}\in\Omega^{1}(C)$ that satisfies $d\widetilde{\beta}\in\Omega^{2}_{bas}(C)$. Here $\Omega_{bas}(C)$ are the differential forms on $C$ that are basic with respect to $\mathcal{F}$; morally, they are the differential forms on the leaf space $C/\mathcal{F}$. 
	
	In case the coisotropic submanifold $C$ is integral, then our notion of first order deformation reduces to closed foliated one-forms $\beta\in\Omega^{1}(\mathcal{F})$ defining flat sections of the vector bundle $(\mathcal{H}^{1},\nabla)$ made up by the first cohomology groups of the fibers of $C\rightarrow C/\mathcal{F}$ (see Ex.~\ref{ex:integral}). These are exactly the first order deformations considered by Ruan in the deformation problem of integral coisotropic submanifolds \cite{ruan}.
	
	\vspace{0.2cm}

	Section~\ref{sec:three} is the core of this note, as it contains the proof of the Main Theorem. We show that every first order deformation $\beta\in\Omega^{1}(\mathcal{F})$ of $C$ --as defined in the above Proposition-- is tangent to a path of coisotropic sections $\alpha_t$ of the Gotay local model $(U,\Omega_G)$, whose characteristic foliation is diffeomorphic to $\mathcal{F}$. The proof is purely geometric, its main ingredient being a Moser argument (see Lemma~\ref{ruanProp5}).
	In case the coisotropic submanifold $(C,\mathcal{F})$ is integral, our notion of first order deformation recovers that of Ruan (see above), and the path $\alpha_t$ obtained via the Main Theorem again consists of integral coisotropic submanifolds. So our Main Theorem extends Ruan's result \cite{ruan} stating that the deformation problem of an integral coisotropic submanifold --within the class of such submanifolds-- is unobstructed.
	
	\vspace{0.2cm}
	
	At last, Section~\ref{sec:four} discusses two implications that our Main Theorem has for the classical coisotropic deformation problem, which allows \emph{all} coisotropic deformations of $C$.
	
	First, the Main Theorem yields a partial unobstructedness result in this context. It shows that first order deformations of $C$ --i.e. closed foliated one-forms on $\mathcal{F}$-- are unobstructed, if their cohomology class lies in $\ker(d_{\nu})$. We like to interpret this result in the following way.
	\newline
	It is clear geometrically that first order deformations $\beta\in\Omega^{1}(\mathcal{F})$ admitting a closed extension are unobstructed, since the latter yields a symplectic vector field on $(U,\Omega_G)$ whose flow generates a path of coisotropic sections prolonging $\beta$. In practice however, this procedure does not yield many unobstructed first order deformations: it only concerns those closed $\beta\in\Omega^{1}(\mathcal{F})$ whose cohomology class lies in the image of the restriction map $H^{1}(C)\rightarrow H^{1}(\mathcal{F})$, which is a finite dimensional subspace.
	Since the existence of a closed extension implies that $[\beta]$ lies in the kernel of $d_{\nu}$, the partial unobstructedness statement resulting from our Main Theorem extends the geometric fact just mentioned. By contrast, the kernel of $d_{\nu}$ is infinite dimensional in general (see Ex.~\ref{ex:inf}), hence our partial unobstructedness result has the potential to generate much more unobstructed first order deformations.

	Second, we already remarked that there is an $L_{\infty}$-algebra \cite{lada} governing the coisotropic deformation problem of $C$ \cite{oh-park},\cite{fiberwise}. It is given by $\big(\Gamma(\wedge T^{*}\mathcal{F}),\{\lambda_k\}\big)$, and it has the property that its Maurer-Cartan elements correspond with coisotropic sections of the Gotay local model $(U,\Omega_G)$. The multibrackets $\lambda_k$ are constructed out of $\Omega_G$, in particular they depend on the choice of complement $G$ to $T\mathcal{F}$. The $L_{\infty}$-algebra gives a way to detect obstructed first order deformations of $C$. Namely, for a first order deformation to be unobstructed, it needs to define a class in the kernel of the Kuranishi map
	\[
	Kr:H^{1}(\mathcal{F})\rightarrow H^{2}(\mathcal{F}):[\beta]\mapsto[\lambda_2(\beta,\beta)].
	\]
	The partial unobstructedness result following from the Main Theorem implies that $\ker(d_{\nu})$ is contained in $\ker(Kr)$. We double-check that this is indeed the case (see Cor.~\ref{cor:kur}). Actually, we prove a more interesting result. We show that, although the multibrackets $\lambda_k$ depend on the choice of complement $G$, the Kuranishi map has a canonical description in terms of the map $d_{\nu}$ and the induced presymplectic form on $C$. This highlights once more the role played by the transverse differentiation map $d_{\nu}$ in the coisotropic deformation problem, and it implies in particular that $\ker(d_{\nu})$ is contained in $\ker(Kr)$.

	\begin{ack}
		Part of this work was done while visiting the Max Planck Institute for Mathematics. I would like to thank the institute for its hospitality and the financial support. I also acknowledge support from the UCL Institute for Mathematics \& Statistical Science (IMSS). I would like to thank Marco Zambon and Ioan M\u{a}rcu\c{t} for useful conversations. I am particularly grateful to Marco Zambon for sharing his forthcoming work \cite{rel}. 
	\end{ack}
	
	\section{Background and setup}\label{sec:one}
	
	In this section, we first collect some background material about coisotropic submanifolds in symplectic geometry. We then introduce in detail the deformation problem that we will consider in this note. Namely, we want to study small deformations of a given coisotropic submanifold $C\subset(M,\omega)$, whose characteristic foliation is diffeomorphic with that of $C$. At last, we recall the necessary preliminaries about infinitesimal deformations of coisotropic submanifolds and foliations.
	
	\subsection{Coisotropic submanifolds and the Gotay normal form}
	\vspace{0.1cm}
	\noindent
	
	We recall what it means for a submanifold $C$ of a symplectic manifold $(M,\omega)$ to be coisotropic. The definition involves the symplectic orthogonal $TC^{\omega}$, which is defined by
	\[
	TC^{\omega}:=\{v\in TM|_{C}:\omega(v,w)=0\ \ \forall w\in TC\}.
	\]
	We also recall Gotay's theorem \cite{gotay}, which provides a normal form for $(M,\omega)$ around a coisotropic submanifold $C$. Such a normal form is useful to study small deformations of $C$.
	
	\begin{defi}
		A submanifold $C$ of $(M,\omega)$ is called \textbf{coisotropic} if $TC^{\omega}\subset TC$. 
	\end{defi}
	
	We list some alternative characterisations of coisotropic submanifolds $C\subset(M,\omega)$. They involve the pullback $\omega_C\in\Omega^{2}(C)$ of $\omega$ to $C$, and are proved by straightforward linear algebra.
	
	\begin{lemma}\label{equiv}
		For any submanifold $C^{k}\subset(M^{2n},\omega)$, the following are equivalent:
		\begin{enumerate}[i)]
			\item $C^{k}\subset(M^{2n},\omega)$ is coisotropic.
			\item $\omega_{C}$ has constant rank equal to $2(k-n)$.
			\item $\omega_{C}^{k-n+1}=0$.
		\end{enumerate}
	\end{lemma}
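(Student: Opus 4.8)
The plan is to reduce everything to pointwise linear algebra in the symplectic vector space $(T_pM,\omega_p)$ with the distinguished subspace $W:=T_pC$, and to establish the three equivalences at each $p\in C$ separately (smoothness/constancy will then come for free). The only inputs I would need are the standard facts that for a subspace $W$ of a symplectic vector space $(V^{2n},\omega)$ one has $\dim W^{\omega}=2n-\dim W$, and that the kernel of the restricted form is $\ker(\omega|_W)=W\cap W^{\omega}$. Taking $\dim W=k$ gives $\dim W^{\omega}=2n-k$ and identifies $\ker(\omega_C|_p)=W\cap W^{\omega}$, so that $\mathrm{rank}(\omega_C|_p)=k-\dim(W\cap W^{\omega})$.

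For the equivalence of i) and ii) I would argue directly: $C$ is coisotropic at $p$ exactly when $W^{\omega}\subseteq W$, which is the same as $W\cap W^{\omega}=W^{\omega}$, i.e.\ $\dim\ker(\omega_C|_p)=\dim W^{\omega}=2n-k$, i.e.\ $\mathrm{rank}(\omega_C|_p)=k-(2n-k)=2(k-n)$. Since this pins the rank down at every point, constancy of the rank is automatic. Separately I would record a pointwise lower bound valid for an \emph{arbitrary} submanifold: from $W\cap W^{\omega}\subseteq W^{\omega}$ we get $\dim\ker(\omega_C|_p)\le 2n-k$, hence $\mathrm{rank}(\omega_C|_p)\ge 2(k-n)$ always.

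For the equivalence of ii) and iii) I would invoke the normal form for an antisymmetric bilinear form of rank $2r$: there is a basis in which $\omega_C=\sum_{i=1}^{r}e_i^{*}\wedge f_i^{*}$, so that $\omega_C^{\,r}=r!\,e_1^{*}\wedge f_1^{*}\wedge\cdots\wedge e_r^{*}\wedge f_r^{*}\neq 0$ while $\omega_C^{\,r+1}=0$. Thus $\omega_C^{\,m}=0$ precisely when $m>r$, and in particular $\omega_C^{\,k-n+1}=0$ at $p$ is equivalent to $\mathrm{rank}(\omega_C|_p)\le 2(k-n)$. Combining this with the lower bound of the previous paragraph, the vanishing $\omega_C^{\,k-n+1}=0$ at every point forces $\mathrm{rank}(\omega_C|_p)=2(k-n)$ throughout, which is exactly ii); the converse is immediate since ii) gives $r=k-n$ and hence $\omega_C^{\,k-n+1}=0$.

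The one genuinely load-bearing point, rather than an obstacle, is that the implication from iii) does \emph{not} follow from the power vanishing alone: $\omega_C^{\,k-n+1}=0$ only yields the upper bound $\mathrm{rank}\le 2(k-n)$, and it is the uniform lower bound $\mathrm{rank}\ge 2(k-n)$ — a consequence purely of the codimension of $C$ via $\dim W^{\omega}=2n-k$ — that upgrades this to an equality and simultaneously delivers the constancy of the rank. Once these two bounds are in place, the rest is routine bookkeeping with the symplectic normal form.
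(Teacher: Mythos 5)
Your proof is correct, and it is exactly the ``straightforward linear algebra'' that the paper invokes without writing out (the paper gives no proof of this lemma, only the remark that it follows from pointwise linear algebra on $(T_pM,\omega_p)$). You also correctly isolate the one non-trivial point: the vanishing $\omega_C^{k-n+1}=0$ alone only bounds the rank above by $2(k-n)$, and it is the codimension identity $\dim T_pC^{\omega}=2n-k$, forcing $\mathrm{rank}(\omega_C|_p)\geq 2(k-n)$ at every point, that upgrades iii) to the constant-rank statement ii).
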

	
	The pullback $\omega_{C}$ is a closed two-form of constant rank, so it defines a \textbf{presymplectic structure} on $C$. Consequently, a coisotropic submanifold $C$ has an induced foliation $\mathcal{F}$, defined by $T\mathcal{F}=\ker(\omega_{C})$. We refer to $\mathcal{F}$ as the \textbf{characteristic foliation} of $C\subset(M,\omega)$.
	
	\bigskip
	
	Gotay's theorem \cite{gotay} provides a normal form for a symplectic manifold $(M,\omega)$ around a coisotropic submanifold $C$. The local model lives on a neighborhood of the zero section of the vector bundle $p:T^{*}\mathcal{F}\rightarrow C$, and it is defined as follows. Choose a complement $G$ to $T\mathcal{F}$ inside $TC$, and let $j:T^{*}\mathcal{F}\hookrightarrow T^{*}C$ be the induced inclusion. Denoting by $\omega_{can}$ the canonical symplectic form on $T^{*}C$, the closed two-form
	\[
	\Omega_{G}:=p^{*}\omega_{C}+j^{*}\omega_{can}
	\]
	is non-degenerate along $C$, hence it is a symplectic form on a neighborhood $U$ of $C\subset T^{*}\mathcal{F}$. 
	\begin{thm}[Gotay \cite{gotay}]
		If $C\subset(M,\omega)$ is a coisotropic submanifold, then a neighborhood of $C$ in $M$ is symplectomorphic with the local model $(U,\Omega_G)$. This symplectomorphism can be chosen such that it restricts to the identity map on $C$.			
	\end{thm}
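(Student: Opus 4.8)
The plan is to combine a tubular-neighborhood identification with a relative Moser argument, in the spirit of Weinstein's proof of the Lagrangian neighborhood theorem. First I would settle the underlying linear algebra: the symplectic form identifies the normal bundle $NC = TM|_{C}/TC$ with $T^{*}\mathcal{F}$. Indeed, the map sending $v\in TM|_{C}$ to $\omega(v,\cdot)|_{T\mathcal{F}}$ has kernel $(T\mathcal{F})^{\omega}$, and since $T\mathcal{F}=\ker(\omega_{C})=TC^{\omega}$ one gets $(T\mathcal{F})^{\omega}=(TC^{\omega})^{\omega}=TC$; a dimension count (the rank of $\omega_{C}$ being $2(k-n)$) then yields an isomorphism $NC\cong(T\mathcal{F})^{*}=T^{*}\mathcal{F}$. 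Using the tubular neighborhood theorem I would produce a diffeomorphism $\phi$ from a neighborhood of the zero section of $T^{*}\mathcal{F}$ onto a neighborhood of $C$ in $M$, restricting to the identity on $C$ and inducing the above isomorphism on normal bundles.

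Pulling back $\omega$ gives a symplectic form $\phi^{*}\omega$ defined near the zero section, and the theorem reduces to finding a symplectomorphism between $(U,\Omega_{G})$ and $\phi^{*}\omega$ that fixes $C$. The crucial, and most delicate, step, which I expect to be the main obstacle, is to verify that after possibly adjusting $\phi$ the two closed two-forms $\Omega_{G}$ and $\phi^{*}\omega$ agree as bilinear forms on the full restricted tangent bundle $T(T^{*}\mathcal{F})|_{C}$, not merely on $TC$. Both pull back to $\omega_{C}$ on $TC$; what must be checked is that the mixed pairing between $TC$ and the vertical fiber directions, together with the vanishing on pairs of vertical directions, is correctly reproduced. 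This is exactly where the construction $\Omega_{G}=p^{*}\omega_{C}+j^{*}\omega_{can}$ and the choice of complement $G$ to $T\mathcal{F}$ are tailored: realizing the normal identification $NC\cong T^{*}\mathcal{F}$ faithfully through the differential of $\phi$ along $C$ is what forces the two forms to match there.

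Once the two forms agree along $C$, I would finish by a relative Moser argument. The difference $\mu:=\phi^{*}\omega-\Omega_{G}$ is closed and vanishes identically on $T(T^{*}\mathcal{F})|_{C}$, so the relative Poincar\'{e} lemma, applied with the fiberwise scaling retraction onto the zero section, provides a primitive $\sigma$ with $\mu=d\sigma$ and $\sigma|_{C}=0$. Consider the interpolation $\omega_{t}:=\Omega_{G}+t\mu$ for $t\in[0,1]$; each $\omega_{t}$ equals $\Omega_{G}$ along $C$, hence is nondegenerate on a neighborhood of $C$ after shrinking, using compactness of $C$. Defining the time-dependent vector field $X_{t}$ by $\iota_{X_{t}}\omega_{t}=-\sigma$, the vanishing of $\sigma$ on $C$ forces $X_{t}=0$ on $C$, so its flow $\psi_{t}$ is defined near $C$ for all $t\in[0,1]$ and fixes $C$ pointwise. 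The Moser computation $\tfrac{d}{dt}\psi_{t}^{*}\omega_{t}=\psi_{t}^{*}(d\iota_{X_{t}}\omega_{t}+\mu)=0$ gives $\psi_{1}^{*}(\phi^{*}\omega)=\Omega_{G}$, whence $\phi\circ\psi_{1}$ is the desired symplectomorphism from $(U,\Omega_{G})$ onto a neighborhood of $C$ in $M$ restricting to the identity on $C$.
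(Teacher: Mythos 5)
The paper does not prove this theorem at all; it quotes it from Gotay's paper \cite{gotay}, so your proposal can only be judged on its own merits. Its skeleton (identify $NC\cong T^{*}\mathcal{F}$, choose a tubular neighborhood, then run a relative Moser argument) is indeed the standard route to this result. Your linear algebra is correct: since $T\mathcal{F}=TC^{\omega}$, the map $v\mapsto\omega(v,\cdot)|_{T\mathcal{F}}$ has kernel $(TC^{\omega})^{\omega}=TC$ and a dimension count gives $NC\cong T^{*}\mathcal{F}$. The closing Moser step is also sound; note only that compactness of $C$ is neither assumed in the statement nor needed: nondegeneracy of all $\omega_{t}$, $t\in[0,1]$, on some neighborhood, and existence of the flow up to time $1$, follow from openness, compactness of $[0,1]$, and the vanishing of $X_{t}$ along $C$.

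The genuine gap sits exactly at the step you yourself flag as crucial, and the resolution you offer there is false. Requiring that $d\phi$ along $C$ induce the isomorphism $NC\cong T^{*}\mathcal{F}$ only determines $\omega(d\phi(\xi),v)$ for $v\in T\mathcal{F}$; it constrains $d\phi(\xi)$ only modulo $TC$, hence says nothing about $\omega(d\phi(\xi),g)$ for $g\in G$, nor about $\omega(d\phi(\xi),d\phi(\eta))$ for two vertical vectors. Since $\Omega_{G}$ along the zero section satisfies $\Omega_{G}(\xi,g)=0$ (because $j(\xi)$ annihilates $G$) and $\Omega_{G}(\xi,\eta)=0$ on pairs of vertical vectors, matching the two forms on $T(T^{*}\mathcal{F})|_{C}$ forces $d\phi$ to send the vertical bundle onto a complement of $TC$ that is isotropic and $\omega$-orthogonal to $G$. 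A generic tubular neighborhood inducing the correct normal map does not satisfy this: replacing $d\phi|_{T^{*}\mathcal{F}}$ by $d\phi|_{T^{*}\mathcal{F}}+A$ with $A:T^{*}\mathcal{F}\rightarrow G$ an arbitrary bundle map leaves the normal identification untouched but destroys both conditions, since $\omega|_{G}$ is nondegenerate. The missing ingredient is the pointwise coisotropic embedding lemma: $G^{\omega}\subset TM|_{C}$ is a symplectic subbundle containing $T\mathcal{F}$ as a Lagrangian subbundle (indeed $\dim G^{\omega}=2(2n-k)$ and $\omega$ vanishes on $T\mathcal{F}$), so one may choose, smoothly in the base point (e.g. via a compatible fiberwise complex structure), a Lagrangian complement $L$ of $T\mathcal{F}$ in $G^{\omega}$; the pairing $\omega$ then identifies $L\cong T^{*}\mathcal{F}$, and the tubular neighborhood must be built so that its vertical differential along $C$ is precisely this identification $T^{*}\mathcal{F}\overset{\sim}{\rightarrow}L$. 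With that choice the two forms genuinely agree on all of $T(T^{*}\mathcal{F})|_{C}$, and your Moser argument then completes the proof.
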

	
	The notation just introduced will be used throughout the paper:
	
	\boxed{
		\parbox{400pt}{
			\begin{itemize}
				\item $G$ is a complement to the characteristic distribution $T\mathcal{F}$ of $C\subset(M,\omega)$.
				\item $j:T^{*}\mathcal{F}\hookrightarrow T^{*}C$ extends an element of $T^{*}\mathcal{F}$ by zero on $G$. We also denote the induced map on differential forms by $j:\Omega^{\bullet}(\mathcal{F})\hookrightarrow\Omega^{\bullet}(C)$.
				\item $(U,\Omega_G)$ is the Gotay local model for the choice of complement $G$.
			\end{itemize}
	}} 
	\vspace{0.2cm}
	
	By Gotay's theorem, studying $\mathcal{C}^{1}$-small deformations of $C\subset(M,\omega)$ amounts to studying small sections of the local model $(U,\Omega_G)$ whose graph is coisotropic. The following result gives a convenient description for the coisotropic sections of $(U,\Omega_G)$. 
	The statement and its proof use the following pieces of notation.
	A section $\alpha\in\Gamma(U)$ defines a diffeomorphism onto its image, which we denote by $\tau_\alpha:C\rightarrow \text{Graph}(\alpha)$. We set $i_{\alpha}:\text{Graph}(\alpha)\hookrightarrow U$ to be the inclusion map, so we have $\alpha=i_{\alpha}\circ\tau_\alpha$ as maps $C\rightarrow U$.
	
	\begin{prop}\label{F}
		Let $C^k\subset(M^{2n},\omega)$ be coisotropic with Gotay local model $(U,\Omega_{G})$.
		For any section $\alpha\in\Gamma(U)$, we have
		\begin{equation}\label{mapF}
			\alpha^{*}\Omega_G=\omega_{C}-d(j(\alpha)).
		\end{equation}
		Consequently, the following are equivalent:
		\begin{enumerate}[i)]
			\item $\text{Graph}(\alpha)\subset(U,\Omega_{G})$ is coisotropic.
			\item $\big(\alpha^{*}\Omega_G\big)^{k-n+1}=0$.
			\item $\omega_{C}-d(j(\alpha))\in\Omega^{2}(C)$ has constant rank, equal to the rank of $\omega_{C}$.
		\end{enumerate}
	\end{prop}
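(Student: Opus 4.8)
The plan is to first establish the pullback formula~\eqref{mapF} by a direct computation, and then to read off the three equivalences by transporting Lemma~\ref{equiv} from $\mathrm{Graph}(\alpha)$ back to $C$ along the diffeomorphism $\tau_\alpha$.

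To prove~\eqref{mapF}, I would split $\Omega_G = p^{*}\omega_C + j^{*}\omega_{can}$ and pull back each summand along $\alpha$. Since $\alpha$ is a section of $p\colon T^{*}\mathcal{F}\to C$, we have $p\circ\alpha = \mathrm{id}_C$, so the first summand contributes $(p\circ\alpha)^{*}\omega_C = \omega_C$. For the second summand, observe that the composite $j\circ\alpha\colon C\to T^{*}C$ is precisely the one-form $j(\alpha)\in\Omega^{1}(C)$, regarded as a section of the cotangent bundle. The only input needed is the tautological property of the canonical symplectic form: writing $\omega_{can} = -d\lambda_{can}$ with $\lambda_{can}$ the Liouville one-form, every section $\sigma\colon C\to T^{*}C$ satisfies $\sigma^{*}\lambda_{can} = \sigma$, hence $\sigma^{*}\omega_{can} = -d\sigma$. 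Applying this to $\sigma = j(\alpha)$ gives $\alpha^{*}j^{*}\omega_{can} = (j\circ\alpha)^{*}\omega_{can} = -d(j(\alpha))$, and summing the two contributions yields~\eqref{mapF}.

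For the equivalences, I would use the factorisation $\alpha = i_\alpha\circ\tau_\alpha$ to write $\alpha^{*}\Omega_G = \tau_\alpha^{*}(i_\alpha^{*}\Omega_G)$, so that $\alpha^{*}\Omega_G$ is the pullback along the diffeomorphism $\tau_\alpha$ of the presymplectic form $i_\alpha^{*}\Omega_G$ that $\Omega_G$ induces on $\mathrm{Graph}(\alpha)$. As $\mathrm{Graph}(\alpha)$ is a $k$-dimensional submanifold of the $2n$-dimensional symplectic manifold $(U,\Omega_G)$ --- here I use that $T^{*}\mathcal{F}$ has rank $2n-k$ over the $k$-dimensional base $C$, so $\dim U = 2n$ --- Lemma~\ref{equiv} applies to it with exponent $k-n+1$ and reference rank $2(k-n)$. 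Since a diffeomorphism preserves both the pointwise rank of a two-form and the vanishing of its powers, the three conditions of Lemma~\ref{equiv} for $\mathrm{Graph}(\alpha)$ transport through $\tau_\alpha$: coisotropy becomes (i); the identity $(i_\alpha^{*}\Omega_G)^{k-n+1}=0$ becomes (ii); and constant rank equal to $\mathrm{rank}(\omega_C)=2(k-n)$ becomes (iii), once~\eqref{mapF} is used to rewrite $\alpha^{*}\Omega_G$ as $\omega_C - d(j(\alpha))$. I do not expect a serious obstacle: the formula is a one-line pullback, and the equivalences merely transport Lemma~\ref{equiv} through a diffeomorphism; the only points deserving a moment's care are the sign convention in the tautological identity $\sigma^{*}\omega_{can}=-d\sigma$ and the dimension bookkeeping just noted, which guarantees that the exponent $k-n+1$ is the right one on both sides.
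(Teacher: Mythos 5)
Your proposal is correct and follows essentially the same route as the paper: the identity \eqref{mapF} is obtained by pulling back $p^{*}\omega_C$ via $p\circ\alpha=\mathrm{id}_C$ and $j^{*}\omega_{can}$ via the tautological property of the Liouville form applied to the section $j\circ\alpha=j(\alpha)$, and the three equivalences are obtained by transporting Lemma~\ref{equiv}, applied to $\mathrm{Graph}(\alpha)\subset(U,\Omega_G)$, through the diffeomorphism $\tau_\alpha$ using $\alpha^{*}\Omega_G=\tau_\alpha^{*}(i_\alpha^{*}\Omega_G)$. Your sign convention $\omega_{can}=-d\lambda_{can}$ and the dimension count $\dim U=2n$ both agree with the paper's conventions, so no adjustments are needed.
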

	\begin{proof}
		We first check that the equality \eqref{mapF} holds. Denoting by $\theta_{can}$ the tautological one-form on $T^{*}C$, we have
		\begin{equation*}
			\alpha^{*}\Omega_{G}=\alpha^{*}(p^{*}\omega_{C}-j^{*}d\theta_{can})=\omega_{C}-d(j\circ\alpha)^{*}\theta_{can}=\omega_{C}-d(j(\alpha)).
		\end{equation*}
		Here the second equality uses that $p\circ\alpha=\text{Id}_{C}$, and the third equality holds by the defining property of the tautological one-form (see \cite[Prop.~3.4]{cannas}), which states that the section $j\circ\alpha\in\Gamma(T^{*}C)$ pulls back the tautological one-form $\theta_{can}\in\Omega^{1}(T^{*}C)$ to $j(\alpha)\in\Omega^{1}(C)$.
		
		We now show that $i)$, $ii)$ and $iii)$ are equivalent. By Lemma \ref{equiv},  $\text{Graph}(\alpha)\subset(U,\Omega_G)$ is coisotropic exactly when the pullback $i_{\alpha}^{*}\Omega_G\in\Omega^{2}(\text{Graph}(\alpha))$ has constant rank equal to $2(k-n)$, or equivalently, when  $(i_{\alpha}^{*}\Omega_G)^{k-n+1}=0$. Since the map $\tau_\alpha:C\rightarrow\text{Graph}(\alpha)$ is a diffeomorphism, the former statement is equivalent with $\tau_\alpha^{*}(i_{\alpha}^{*}\Omega_G)=\alpha^{*}\Omega_{G}$ having constant rank $2(k-n)$, and the latter with $(\alpha^{*}\Omega_{G})^{k-n+1}=\tau_\alpha^{*}((i_{\alpha}^{*}\Omega_G)^{k-n+1})=0$. Along with the equality \eqref{mapF}, this proves that $i)$, $ii)$ and $iii)$ are indeed equivalent.
	\end{proof}
	
	Prop.~\ref{F} shows that a coisotropic deformation of $C$ defines an exact deformation of the presymplectic structure $\omega_{C}$, i.e. we have an assignment
	\begin{equation}\label{Fdefs}
		\text{Def}_{U}(C)\rightarrow\text{Def}(\omega_{C}):\alpha\mapsto \omega_{C}-d(j(\alpha)).
	\end{equation}
	Instead of working with coisotropic sections of $(U,\Omega_G)$, it is more convenient to work with the associated presymplectic forms, because these are all defined on the same manifold $C$.

	\subsection{Deformations with fixed characteristic foliation}
	
	\vspace{0.1cm}
	\noindent
	
	Given a coisotropic submanifold $C\subset(M,\omega)$ with characteristic foliation $\mathcal{F}$, our aim is to study deformations of $C$ whose characteristic foliation is diffeomorphic to $\mathcal{F}$. We will now introduce this deformation space, and a local model for it which conveniently describes small deformations.
	
	\begin{defi}\label{def:Defs}
		We define $\text{Def}_{\mathcal{F}}(C)$ to be the space of coisotropic submanifolds $C'$ of $(M,\omega)$ for which there exists a foliated diffeomorphism $(C',\mathcal{F}')\overset{\sim}{\rightarrow} (C,\mathcal{F})$.
	\end{defi}
	
	We are interested in elements of $\text{Def}_{\mathcal{F}}(C)$ $\mathcal{C}^{1}$-close to $C$. That is, passing to the Gotay local model $(U,\Omega_{G})$ of $C$, we look at coisotropic sections $\alpha\in\Gamma(U)$ for which $(\text{Graph}(\alpha),\ker(i_{\alpha}^{*}\Omega_{G}))$ and $(C,\ker\omega_{C})$ are diffeomorphic as foliated manifolds. In this respect, note the following.
	
	\begin{lemma}\label{lem:compose}
		For any coisotropic section $\alpha$ of $(U,\Omega_G)$, the following are equivalent:
		\begin{enumerate}[i)]
			\item There exists a foliated diffeomorphism $\psi:(C,\ker\omega_{C})\rightarrow(\text{Graph}(\alpha),\ker(i_{\alpha}^{*}\Omega_{G}))$.
			\item There exists a foliated diffeomorphism $\phi:(C,\ker\omega_{C})\rightarrow\big(C,\ker(\omega_{C}-d(j(\alpha)))\big)$.
		\end{enumerate}
	\end{lemma}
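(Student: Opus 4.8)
The plan is to show that the graphing diffeomorphism $\tau_\alpha\colon C\to\text{Graph}(\alpha)$ is itself a foliated diffeomorphism between the two relevant foliations, and then to obtain both implications simply by composing with it. This reduces the equivalence to a formal bookkeeping of compositions, with all the geometric content already packaged in the identity $\alpha^{*}\Omega_G=\omega_{C}-d(j(\alpha))$ established in Prop.~\ref{F}.

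First I would recall from the proof of Prop.~\ref{F} that, since $\alpha=i_{\alpha}\circ\tau_\alpha$, one has
\[
\tau_\alpha^{*}\big(i_{\alpha}^{*}\Omega_G\big)=\alpha^{*}\Omega_G=\omega_{C}-d(j(\alpha)).
\]
Thus $\tau_\alpha$ is a diffeomorphism pulling back the presymplectic form $i_{\alpha}^{*}\Omega_G$ on $\text{Graph}(\alpha)$ to the presymplectic form $\omega_{C}-d(j(\alpha))$ on $C$. Since the differential of a diffeomorphism carries the kernel of a pulled-back two-form isomorphically onto the kernel of the original form, I would deduce
\[
(\tau_\alpha)_{*}\ker\big(\omega_{C}-d(j(\alpha))\big)=\ker\big(i_{\alpha}^{*}\Omega_G\big).
\]
In other words, $\tau_\alpha\colon\big(C,\ker(\omega_{C}-d(j(\alpha)))\big)\to\big(\text{Graph}(\alpha),\ker(i_{\alpha}^{*}\Omega_G)\big)$ is a foliated diffeomorphism.

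With this in hand, the two implications are immediate. For $ii)\Rightarrow i)$, given a foliated diffeomorphism $\phi\colon(C,\ker\omega_{C})\to\big(C,\ker(\omega_{C}-d(j(\alpha)))\big)$, the composition $\psi:=\tau_\alpha\circ\phi$ matches the foliations at each stage, hence is a foliated diffeomorphism $(C,\ker\omega_{C})\to\big(\text{Graph}(\alpha),\ker(i_{\alpha}^{*}\Omega_G)\big)$. For $i)\Rightarrow ii)$, given $\psi$ one sets $\phi:=\tau_\alpha^{-1}\circ\psi$, which is a foliated diffeomorphism onto $\big(C,\ker(\omega_{C}-d(j(\alpha)))\big)$ since $\tau_\alpha^{-1}$ inverts the foliated diffeomorphism constructed above.

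There is no serious obstacle in this argument: the only point requiring care is the verification that $\tau_\alpha$ intertwines the two kernel distributions, which is exactly the content of the pullback identity from Prop.~\ref{F} combined with the elementary fact that a diffeomorphism sends the kernel of a pulled-back form to the kernel of the form. Once this is noted, the equivalence is a one-line composition in each direction.
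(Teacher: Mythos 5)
Your proposal is correct and follows essentially the same route as the paper: both rely on the identity $\tau_\alpha^{*}(i_{\alpha}^{*}\Omega_{G})=\omega_{C}-d(j(\alpha))$ from Prop.~\ref{F} to see that $\tau_\alpha$ (with inverse $p|_{\text{Graph}(\alpha)}$) intertwines the two kernel foliations, and then obtain each implication by composing with $\tau_\alpha$ or its inverse. The only difference is that you spell out explicitly the elementary fact that a diffeomorphism maps the kernel of a pulled-back two-form onto the kernel of the original form, which the paper leaves implicit.
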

	\begin{proof}
		Recall from Prop. \ref{F} that the section $\alpha$ defines a diffeomorphism $\tau_\alpha:C\rightarrow\text{Graph}(\alpha)$ with inverse $p|_{\text{Graph}(\alpha)}:\text{Graph}(\alpha)\rightarrow C$, satisfying $\tau_\alpha^{*}(i_{\alpha}^{*}\Omega_{G})=\omega_{C}-d(j(\alpha))$. Consequently, given $\psi$, we define $\phi:=p|_{\text{Graph}(\alpha)}\circ\psi$. Conversely, given $\phi$, we set $\psi:=\tau_\alpha\circ\phi$. 
	\end{proof}
	
	The lemma above motivates the following definition, which introduces the local model $\text{Def}^{U}_{\mathcal{F}}(C)$ for the space $\text{Def}_{\mathcal{F}}(C)$. This is the key object of study in this note.

	\begin{defi}\label{definition}
		Let $C\subset (M,\omega)$ be a coisotropic submanifold with characteristic foliation $\mathcal{F}$ and Gotay local model $(U,\Omega_G)$. We set $\text{Def}^{U}_{\mathcal{F}}(C)$ to be the space of $\alpha\in\Gamma(U)$ such that
		\begin{equation*}
			\begin{cases}
				\text{Graph}(\alpha)\ \text{is coisotropic in}\ (U,\Omega_{G})\\
				\text{There is a foliated diffeomorphism}\  (C,\ker\omega_{C})\overset{\sim}{\rightarrow}\big(C,\ker(\omega_{C}-d(j(\alpha)))\big)
			\end{cases}.
		\end{equation*}
	\end{defi}
	
	\begin{remark}
		The existence of a foliated diffeomorphism $(C,\ker\omega_{C})\overset{\sim}{\rightarrow}\big(C,\ker(\omega_{C}-d(j(\alpha)))\big)$ actually implies that $\text{Graph}(\alpha)$ is coisotropic in $(U,\Omega_G)$, because of Prop.~\ref{F}. Nevertheless, we prefer to include the first requirement in Def.~\ref{definition} for the sake of clarity.
	\end{remark}
	
	In conclusion, the map \eqref{Fdefs}, combined with the map which takes a constant rank two-form to its kernel, yields a 2-step process
	\begin{equation*}
		\text{Def}_{U}(C)\rightarrow\text{Def}(\omega_{C})\overset{\ker}{\rightarrow}\text{Def}(\mathcal{F}):\alpha\mapsto\ker(\omega_{C}-d(j(\alpha))),
	\end{equation*}
	which assigns to a coisotropic deformation $\alpha\in\Gamma(U)$ of $C$ a deformation of its characteristic foliation $\mathcal{F}$. We restrict attention to those elements $\alpha\in\text{Def}_{U}(C)$ for which the image under this map is equivalent with $\mathcal{F}$ by means of a diffeomorphism.
	
	\begin{ex}\label{ex:contact}
		We present a class of coisotropic submanifolds that have an abundance of deformations with diffeomorphic characteristic foliation. If $C\subset(M^{2n},\omega)$ is coisotropic of codimension $q$, then $C$ is called \textbf{of q-contact type} \cite{bolle} if there exist $\alpha_1,\ldots,\alpha_q\in\Omega^{1}(C)$ such that:
		\begin{enumerate}
			\item $d\alpha_i=\omega_{C}$ for $i=1,\ldots,q$.
			\item $\alpha_1\wedge\ldots\wedge\alpha_q\wedge\omega_{C}^{n-q}$ is nowhere zero on $C$.
		\end{enumerate}
		We give some concrete examples. First, a Lagrangian torus $C\subset(M^{2n},\omega)$ is of $n$-contact type with $\alpha_i=d\theta_i$, where $(\theta_1,\ldots,\theta_n)$ are the angular coordinates on the torus $C$. Second, the unit sphere $S^{3}\subset(\mathbb{R}^{4},\omega_{can})$ is of $1$-contact type with $\alpha$ equal to the usual connection one-form on $S^{3}$ for the Hopf fibration, i.e.
		\[
		\alpha=i^{*}\left(\sum_{j=1}^{2}x_jdy_j-y_jdx_j\right).
		\]
		There is a normal form around coisotropic submanifolds of $q$-contact type \cite[Lemma 1]{bolle} which refines Gotay's theorem, stating that a neighborhood of $C$ in $(M,\omega)$ is symplectomorphic with the model
		\begin{equation}\label{eq:contactmodel}
			\Big(C\times B_{\epsilon}^{q},p^{*}\omega_{C}+\sum_{i=1}^{q}d(y_i p^{*}\alpha_i)\Big).
		\end{equation}
		Here $B_{\epsilon}^{q}$ is the $\epsilon$-ball in $\mathbb{R}^{q}$ centered at the origin, $(y_1,\ldots,y_q)$ are the coordinates on $B_{\epsilon}^{q}$ and $p:C\times B_{\epsilon}^{q}\rightarrow C$ is the projection. As a consequence of this normal form, all slices $C\times\{h\}$ for small enough $h\in B_{\epsilon}^{q}$ are coisotropic deformations of $C$ whose characteristic foliation is diffeomorphic with that of $C$. Indeed, pulling back the symplectic form \eqref{eq:contactmodel} to $C\times\{h\}$, we obtain
		\[
		i_{h}^{*}\Big(p^{*}\omega_{C}+\sum_{i=1}^{q}d(y_i p^{*}\alpha_i)\Big)=\Big(1+\sum_{i=1}^{q}h_i\Big)p|_{C\times\{h\}}^{*}\omega_C.
		\]
		This shows that, as long as $\|h\|$ is small enough so that $1+\sum_{i=1}^{q}h_i$ is nonzero, then $C\times\{h\}$ is coisotropic, and the projection $p|_{C\times\{h\}}:C\times\{h\}\rightarrow C$ is a diffeomorphism which matches the characteristic foliations of $C\times\{h\}$ and $C$.
	\end{ex}

	\subsection{First order deformations of coisotropic submanifolds and foliations}
	
	\vspace{0.1cm}
	\noindent
	
	There are two classes of geometric objects whose deformation theory is important in this note, namely coisotropic submanifolds and foliations. At the infinitesimal level, the deformations of these objects are governed by certain cochain complexes and their associated cohomology groups, which we now recall.

	\subsubsection{Coisotropic submanifolds}\label{subsub:coiso}
	Let $C\subset(M,\omega)$ be a coisotropic submanifold with characteristic foliation $\mathcal{F}$. It is well-known that the complex governing infinitesimal deformations of $C$ is the leafwise de Rham complex $\big(\Omega^{\bullet}(\mathcal{F}),d_{\mathcal{F}}\big)$, see \cite{oh-park},\cite{equivalences}. Here $\Omega^{k}(\mathcal{F}):=\Gamma(\wedge^{k}T^{*}\mathcal{F})$, and the differential $d_{\mathcal{F}}$ is defined by the usual Koszul formula
	\begin{align*}
		d_{\mathcal{F}}\alpha(V_0,\ldots,V_k)&=\sum_{i=0}^{k}(-1)^{i}V_i\big(\alpha(V_0,\ldots,V_{i-1},\widehat{V_i},V_{i+1},\ldots,V_k)\big)\\
		&\hspace{0.5cm}+\sum_{i<j}(-1)^{i+j}\alpha\big([V_i,V_j],V_0,\ldots,\widehat{V_i},\ldots,\widehat{V_j},\ldots,V_k\big).
	\end{align*}
	We spell this out in the following lemma, which was already obtained in \cite[Cor.~2.5]{equivalences} using Poisson geometry. We give a simple alternative argument, which only relies on Prop.~\ref{F}.
	
	\begin{lemma}\label{lem:infcoiso}
		Let $C^{k}\subset(M^{2n},\omega)$ be coisotropic with Gotay local model $(U,\Omega_G)$. If $\alpha_t$ is a smooth one-parameter family of coisotropic sections of $U$ starting at the zero section $\alpha_0=0$, then $\dt{\alpha_0}$ is closed with respect to $d_{\mathcal{F}}$.
	\end{lemma}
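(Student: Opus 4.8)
```latex
The plan is to differentiate the formula \eqref{mapF} from Prop.~\ref{F} at $t=0$ and extract a closed foliated one-form from the result. First I would apply Prop.~\ref{F} to each member of the family, yielding
\begin{equation*}
	\alpha_t^{*}\Omega_G=\omega_{C}-d\big(j(\alpha_t)\big)\qquad\text{for all }t,
\end{equation*}
together with the coisotropy condition in the form $\big(\alpha_t^{*}\Omega_G\big)^{k-n+1}=0$. Since $\alpha_0=0$, we have $j(\alpha_0)=0$ and hence $\alpha_0^{*}\Omega_G=\omega_{C}$. Differentiating the formula above at $t=0$ gives
\begin{equation*}
	\dt{}\Big|_{t=0}\big(\alpha_t^{*}\Omega_G\big)=-d\big(j(\dt{\alpha_0})\big),
\end{equation*}
using that $j$ is linear so that $\dt{}|_{t=0}\,j(\alpha_t)=j(\dt{\alpha_0})$. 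Here I write $\beta:=\dt{\alpha_0}\in\Omega^{1}(C)$ for the leafwise initial velocity; more precisely $\dt{\alpha_0}\in\Gamma(T^{*}\mathcal{F})=\Omega^{1}(\mathcal{F})$, since each $\alpha_t$ is a section of $T^{*}\mathcal{F}\to C$ and $U$ sits inside $T^{*}\mathcal{F}$.

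The heart of the argument is to turn the constant-rank condition into a statement about the derivative. Writing $r:=k-n$, so that $\omega_{C}$ has rank $2r$, the coisotropy of each $\alpha_t$ means $\big(\alpha_t^{*}\Omega_G\big)^{r+1}=0$ identically in $t$. Differentiating this identity at $t=0$ and using $\alpha_0^{*}\Omega_G=\omega_{C}$ yields
\begin{equation*}
	(r+1)\,\omega_{C}^{r}\wedge\Big(\dt{}\big|_{t=0}\alpha_t^{*}\Omega_G\Big)=0,
\end{equation*}
that is $\omega_{C}^{r}\wedge\big(-d(j(\beta))\big)=0$, so $\omega_{C}^{r}\wedge d(j(\beta))=0$. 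The task is then purely linear-algebraic: I must show that this wedge condition forces $d_{\mathcal{F}}\beta=0$, i.e.\ that the leafwise part of $d(j(\beta))$ vanishes. At each point, choose a basis adapted to the splitting $TC=T\mathcal{F}\oplus G$ in which $\omega_{C}$ is the standard symplectic form on $G$ (it annihilates $T\mathcal{F}$). Wedging a two-form with $\omega_{C}^{r}$, where $2r=\dim G$, detects exactly the components of that two-form lying in $\wedge^{2}(T\mathcal{F})^{*}$; more precisely, the map $\eta\mapsto \omega_{C}^{r}\wedge\eta$ is injective on the subspace $\wedge^{2}(T\mathcal{F})^{*}\subset\wedge^{2}T^{*}C$, since $\omega_{C}^{r}$ is a volume form on $G$ and wedging with it pairs the $\mathcal{F}$-directions nondegenerately against the top $G$-form. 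Hence $\omega_{C}^{r}\wedge d(j(\beta))=0$ forces the $\wedge^{2}T^{*}\mathcal{F}$-component of $d(j(\beta))$ to vanish. Finally I would identify this leafwise component: restricting the ordinary de Rham differential of the extension $j(\beta)$ to leafwise vector fields recovers precisely $d_{\mathcal{F}}\beta$, because $j(\beta)$ restricts to $\beta$ on $T\mathcal{F}$ and the Koszul formula for $d_{\mathcal{F}}$ only ever evaluates on sections of $T\mathcal{F}$. Therefore $d_{\mathcal{F}}\beta=0$, as claimed.

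The step I expect to be the main obstacle is the linear-algebra claim that wedging with $\omega_{C}^{r}$ is injective on the leafwise two-forms $\wedge^{2}T^{*}\mathcal{F}$, and the verification that the leafwise component of $d(j(\beta))$ is genuinely $d_{\mathcal{F}}\beta$ rather than some other leafwise form. The first point is a fiberwise statement that should follow cleanly from the Lefschetz-type nondegeneracy of $\omega_{C}$ on its symplectic complement $G$; the subtlety is to phrase it in terms of the full constant-rank form $\omega_{C}$ rather than a genuine symplectic form, so I would work in a pointwise Darboux-type normal frame for the presymplectic form $\omega_{C}$. The second point is where the specific role of the extension map $j$ enters: since $j(\beta)$ is defined to extend $\beta$ by zero on $G$, one must check that no $G$-contributions contaminate the purely leafwise differential, which is exactly the content of the Koszul formula restricted to $T\mathcal{F}$ (a foliation, hence involutive, so brackets of leafwise fields stay leafwise).
```
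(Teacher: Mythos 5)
Your proposal is correct and follows essentially the same route as the paper: differentiate the identity $(\alpha_t^{*}\Omega_G)^{k-n+1}=0$ via Prop.~\ref{F} to obtain $d(j(\dt{\alpha_0}))\wedge\omega_C^{k-n}=0$, then deduce that the leafwise part of $d(j(\dt{\alpha_0}))$, which equals $d_{\mathcal{F}}\dt{\alpha_0}$, must vanish. The only cosmetic difference is in the last step: the paper contracts with two leafwise vector fields $\iota_W\iota_V$ (using $\iota_V\omega_C=0$) to isolate $(d_{\mathcal{F}}\dt{\alpha_0})(V,W)\,\omega_C^{k-n}$, whereas you phrase the same pointwise fact as injectivity of $\eta\mapsto\omega_C^{k-n}\wedge\eta$ on $\wedge^{2}T^{*}\mathcal{F}$; both are correct implementations of the identical linear-algebra observation.
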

	\begin{proof}
		By Prop. \ref{F}, we have
		\begin{align}\label{eq:differentiate}
			0&=\left.\frac{d}{dt}\right|_{t=0}(\alpha_{t}^{*}\Omega_G)^{k-n+1}\nonumber\\
			&=(k-n+1)\left(\left.\frac{d}{dt}\right|_{t=0}\alpha_{t}^{*}\Omega_G\right)\wedge(\alpha_{0}^{*}\Omega_G)^{k-n}\nonumber\\
			&=(k-n+1)\left(\left.\frac{d}{dt}\right|_{t=0}\big(\omega_{C}-d(j(\alpha_t))\big)\right)\wedge\omega_{C}^{k-n}\nonumber\\
			&=-(k-n+1)d(j(\dt{\alpha_0}))\wedge\omega_{C}^{k-n},
		\end{align}
		also using that the pullback of $\Omega_G$ to the zero section is $\omega_C$. Since $C$ is coisotropic, we know that $k\geq n$, hence \eqref{eq:differentiate} implies that $d(j(\dt{\alpha_0}))\wedge\omega_{C}^{k-n}=0$. So for all $V,W\in\Gamma(T\mathcal{F})$, we have
		\[
		0=\iota_{W}\iota_V\big(d(j(\dt{\alpha_0}))\wedge\omega_{C}^{k-n}\big)=(d_{\mathcal{F}}\dt{\alpha_0})(V,W)\omega_{C}^{k-n}.
		\]
		By Lemma \ref{equiv} $ii)$, the rank of $\omega_{C}$ is $2(k-n)$, so that $\omega_{C}^{k-n}$ is nowhere zero. So we necessarily have that $(d_{\mathcal{F}}\dt{\alpha_0})(V,W)$ vanishes, which shows that $d_{\mathcal{F}}\dt{\alpha_0}=0$.
	\end{proof}
	
	This result motivates the following definition.
	
	\begin{defi}
		Assume that $C\subset(M,\omega)$ is a coisotropic submanifold with characteristic foliation $\mathcal{F}$ and Gotay local model $(U,\Omega_G)$. 
		\begin{enumerate}[i)]
			\item A \textbf{first order deformation} of $C$ is a foliated one-form $\beta\in\Omega^{1}(\mathcal{F})$ which is $d_{\mathcal{F}}$-closed. 
			\item A first order deformation $\beta$ is said to be \textbf{unobstructed} if there exists a smooth path $\alpha_t$ of coisotropic sections of $U$ starting at the zero section $\alpha_0=0$, satisfying $\dt{\alpha_0}=\beta$. Otherwise, we say that $\beta$ is \textbf{obstructed}.
		\end{enumerate}
	\end{defi}
	
	It is well-known that a coisotropic submanifold $C$ has obstructed first order deformations in general, reflecting the fact that the space of coisotropic submanifolds may fail to be smooth around $C$. See \cite{marco} for an example where this occurs.
	
	
	\subsubsection{Foliations}\label{subsub:fol} Let $\mathcal{F}$ be a foliation on a manifold $C$. The normal bundle $N\mathcal{F}:=TC/T\mathcal{F}$ carries a flat $T\mathcal{F}$-connection, called the Bott connection, which is defined by
	\[
	\nabla_{X}\overline{Y}=\overline{[X,Y]},\hspace{1cm} X\in\Gamma(T\mathcal{F}), \overline{Y}\in\Gamma(N\mathcal{F}).
	\]
	We obtain a complex $\big(\Omega^{\bullet}(\mathcal{F};N\mathcal{F}),d_{\nabla}\big)$, where $\Omega^{k}(\mathcal{F};N\mathcal{F}):=\Gamma(\wedge^{k}T^{*}\mathcal{F}\otimes N\mathcal{F})$ are foliated forms with values in $N\mathcal{F}$, and the differential $d_{\nabla}$ is defined by
	\begin{align*}
		d_{\nabla}\eta(V_0,\ldots,V_k)&=\sum_{i=0}^{k}(-1)^{i}\nabla_{V_i}\big(\eta(V_0,\ldots,V_{i-1},\widehat{V_i},V_{i+1},\ldots,V_k)\big)\\
		&\hspace{0.5cm}+\sum_{i<j}(-1)^{i+j}\eta\big([V_i,V_j],V_0,\ldots,\widehat{V_i},\ldots,\widehat{V_j},\ldots,V_k\big).
	\end{align*}
	
	The work \cite{heitsch} of Heitsch shows that infinitesimal deformations of the foliation $\mathcal{F}$ are one-cocycles in  $\big(\Omega^{\bullet}(\mathcal{F};N\mathcal{F}),d_{\nabla}\big)$. Moreover, if a smooth deformation of $\mathcal{F}$ is obtained applying an isotopy to $\mathcal{F}$, then the corresponding infinitesimal deformation is a one-coboundary in $\big(\Omega^{\bullet}(\mathcal{F};N\mathcal{F}),d_{\nabla}\big)$. We now spell this out in a bit more detail.
	
	Assume we are given a smooth path of foliations $\mathcal{F}_{t}$ with $\mathcal{F}_{0}=\mathcal{F}$. We fix a complement $G$ to $T\mathcal{F}$ and identify $G\cong N\mathcal{F}$. The induced Bott connection on $G$ is given by
	\[
	\nabla_{X}Y=\text{pr}_{G}[X,Y],\hspace{1cm} X\in\Gamma(T\mathcal{F}), Y\in\Gamma(G),
	\]
	where $\text{pr}_{G}:TC\rightarrow G$ is the projection. Say that $C$ is compact, then there exists $\epsilon>0$ such that $T\mathcal{F}_t$ is still transverse to $G$ for $0\leq t\leq\epsilon$. We can therefore assume that 
	\[
	T\mathcal{F}_t=\text{Graph}(\eta_t)=\{X+\eta_t(X): X\in\Gamma(T\mathcal{F})\}
	\]
	for some $\eta_t\in\Gamma(T^{*}\mathcal{F}\otimes G)$. The next result is essentially \cite[Cor.~2.11]{heitsch} and \cite[Prop.~2.12]{heitsch}. 
	
	\begin{lemma}\label{lem:inffol}
		In the setup described above, we have:
		\begin{enumerate}
			\item The infinitesimal deformation $\dt{\eta_0}$ is closed with respect to $d_{\nabla}$.
			\item If the path $\mathcal{F}_t$ is generated by an isotopy $(\phi_t)$, i.e. $T\mathcal{F}_t=(\phi_t)_{*}T\mathcal{F}$, then the corresponding infinitesimal deformation is exact. Indeed,
			\[
			\dt{\eta_0}=d_{\nabla}(\text{pr}_{G}V_0),
			\]
			where $(V_t)$ is the time-dependent vector field corresponding with the isotopy $(\phi_t)$.
		\end{enumerate}
	\end{lemma}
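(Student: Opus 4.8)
The plan is to obtain both statements by differentiating the integrability (involutivity) condition satisfied by the distributions $T\mathcal{F}_t$ at $t=0$, exploiting throughout that $\eta_0=0$ since $\mathcal{F}_0=\mathcal{F}=\text{Graph}(0)$. The whole point is that the cocycle and coboundary conditions of $\big(\Omega^{\bullet}(\mathcal{F};N\mathcal{F}),d_{\nabla}\big)$ are exactly the linearizations of, respectively, the integrability of $T\mathcal{F}_t$ and the statement that $T\mathcal{F}_t$ is dragged along by a flow. This is the argument underlying Heitsch's results; I would phrase it in the present notation.

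For part (1), I would start from the fact that each $T\mathcal{F}_t$ is involutive. Fixing $V,W\in\Gamma(T\mathcal{F})$, the sections $V+\eta_t(V)$ and $W+\eta_t(W)$ lie in $\Gamma(T\mathcal{F}_t)=\Gamma(\text{Graph}(\eta_t))$, so involutivity is equivalent to the pointwise identity
\[
\text{pr}_{G}[V+\eta_t(V),\,W+\eta_t(W)]=\eta_t\big(\text{pr}_{T\mathcal{F}}[V+\eta_t(V),\,W+\eta_t(W)]\big).
\]
I would then differentiate both sides at $t=0$. On the left, the term $\text{pr}_G[V,W]$ is $t$-independent and the term quadratic in $\eta_t$ drops because $\eta_0=0$, leaving $\text{pr}_G[V,\dt{\eta_0}(W)]-\text{pr}_G[W,\dt{\eta_0}(V)]$, which by the definition of the Bott connection equals $\nabla_V(\dt{\eta_0}(W))-\nabla_W(\dt{\eta_0}(V))$. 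On the right, differentiating through $\eta_t$ produces $\dt{\eta_0}([V,W])$ (using $\text{pr}_{T\mathcal{F}}[V,W]=[V,W]$ by involutivity of $T\mathcal{F}$), while the term where the derivative hits the argument is killed by $\eta_0=0$. Comparing the two sides gives precisely $\nabla_V(\dt{\eta_0}(W))-\nabla_W(\dt{\eta_0}(V))-\dt{\eta_0}([V,W])=0$, which is $d_{\nabla}(\dt{\eta_0})(V,W)=0$.

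For part (2), I would push forward a section $X\in\Gamma(T\mathcal{F})$ by $\phi_t$: since $(\phi_t)_*X\in\Gamma(T\mathcal{F}_t)=\Gamma(\text{Graph}(\eta_t))$, its $G$- and $T\mathcal{F}$-components are related by $\text{pr}_{G}((\phi_t)_*X)=\eta_t\big(\text{pr}_{T\mathcal{F}}((\phi_t)_*X)\big)$. Differentiating at $t=0$ and using the standard identity $\left.\tfrac{d}{dt}\right|_{0}(\phi_t)_*X=[X,V_0]$, the left side becomes $\text{pr}_G[X,V_0]$; writing $V_0=\text{pr}_{T\mathcal{F}}V_0+\text{pr}_{G}V_0$, the $T\mathcal{F}$-part contributes nothing (integrability of $T\mathcal{F}$) and the $G$-part gives $\nabla_X(\text{pr}_{G}V_0)$ by definition of the Bott connection. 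On the right, the same $\eta_0=0$ bookkeeping as before leaves only $\dt{\eta_0}(X)$. Hence $\dt{\eta_0}(X)=\nabla_X(\text{pr}_{G}V_0)=\big(d_{\nabla}(\text{pr}_{G}V_0)\big)(X)$ for all $X$, i.e.\ $\dt{\eta_0}=d_{\nabla}(\text{pr}_{G}V_0)$.

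The computations themselves are routine once set up, so the main obstacle is really the careful bookkeeping of which terms survive differentiation at $t=0$: every term quadratic in $\eta_t$, and every term where the derivative lands on an argument of $\eta_t$, vanishes precisely because $\eta_0=0$, and this is what makes the nonlinear integrability condition linearize to the clean $d_{\nabla}$-formulas. I would also be careful with the sign convention in $\left.\tfrac{d}{dt}\right|_{0}(\phi_t)_*X=[X,V_0]$ (equivalently $-\mathcal{L}_{V_0}X$), as a sign slip here would flip the coboundary formula. Finally, I would note that all identities are checked on arbitrary sections $V,W,X\in\Gamma(T\mathcal{F})$ and are $C^{\infty}(C)$-linear in the appropriate slots, so they genuinely define the global foliated forms $\dt{\eta_0}\in\Omega^{1}(\mathcal{F};N\mathcal{F})$ and $\text{pr}_{G}V_0\in\Omega^{0}(\mathcal{F};N\mathcal{F})$ under the identification $G\cong N\mathcal{F}$.
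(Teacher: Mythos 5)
Your proof is correct, but it takes a genuinely different route from the paper, which in fact offers no proof at all: the text establishes Lemma \ref{lem:inffol} by citing Heitsch (Cor.~2.11 and Prop.~2.12 of \cite{heitsch}), together with the subsequent remark explaining that Heitsch's argument for the exactness statement, written for time-independent vector fields, goes through in the time-dependent case. What you supply is the underlying linearization argument itself, carried out in the paper's graph notation: you differentiate at $t=0$ the involutivity identity $\text{pr}_{G}[V+\eta_t(V),W+\eta_t(W)]=\eta_t\big(\text{pr}_{T\mathcal{F}}[V+\eta_t(V),W+\eta_t(W)]\big)$, resp. the membership identity $\text{pr}_{G}((\phi_t)_*X)=\eta_t\big(\text{pr}_{T\mathcal{F}}((\phi_t)_*X)\big)$, noting that every term quadratic in $\eta_t$, and every term where the derivative lands on the argument of $\eta_t$, vanishes because $\eta_0=0$. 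The bookkeeping checks out: in (1) you obtain $\nabla_V(\dt{\eta_0}(W))-\nabla_W(\dt{\eta_0}(V))-\dt{\eta_0}([V,W])=0$, which is $d_{\nabla}\dt{\eta_0}=0$ with the paper's Koszul formula; in (2) your sign convention $\left.\tfrac{d}{dt}\right|_{0}(\phi_t)_*X=-[V_0,X]=[X,V_0]$ is the correct one for $\tfrac{d}{dt}\phi_t=V_t\circ\phi_t$, and combined with $\text{pr}_{G}[X,\text{pr}_{T\mathcal{F}}V_0]=0$ (involutivity of $T\mathcal{F}$) and $\text{pr}_{G}[X,\text{pr}_{G}V_0]=\nabla_X(\text{pr}_{G}V_0)$ it gives exactly $\dt{\eta_0}=d_{\nabla}(\text{pr}_{G}V_0)$, matching the stated formula. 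Your approach buys two things: the lemma becomes self-contained rather than resting on an external source, and the time-dependent case is automatic---only $V_0$ enters the derivative at $t=0$---so the caveat in the paper's remark about Heitsch's time-independent hypothesis becomes unnecessary. What the paper's route buys is brevity and deference to the original reference.
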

	
	\begin{remark}
		Actually, \cite[Prop.~2.12]{heitsch} only concerns infinitesimal deformations arising from a path of foliations generated by the flow of a (time-independent) vector field. However, it is clear that the proof still works when the vector field is time-dependent, and the resulting statement is part $(2)$ of Lemma \ref{lem:inffol} above.
	\end{remark}
	
	Lemma \ref{lem:inffol} justifies the following definition.
	
	\begin{defi}\label{def:inffol}
		Let $\mathcal{F}$ be a foliation on a manifold $C$.
		\begin{enumerate}[i)]
			\item A \textbf{first order deformation} of $\mathcal{F}$ is an element $\eta\in\Omega^{1}(\mathcal{F};N\mathcal{F})$ which is $d_{\nabla}$-closed.
			\item We call $\mathcal{F}$ \textbf{infinitesimally rigid} if the cohomology group $H^{1}(\mathcal{F};N\mathcal{F})$ vanishes.
		\end{enumerate}
	\end{defi}
	
	Let us also mention here that the Bott connection $\nabla$ on $N\mathcal{F}$ induces a flat $T\mathcal{F}$-connection $\nabla^{*}$ on $N^{*}\mathcal{F}\cong T\mathcal{F}^{0}$. The two are related by the Leibniz rule
	\begin{equation}\label{eq:Leibnizconnections}
		X\langle \overline{Y},\beta\rangle=\langle\nabla_{X}\overline{Y},\beta\rangle+\langle \overline{Y},\nabla^{*}_{X}\beta\rangle,\hspace{0.5cm}X\in\Gamma(T\mathcal{F}),\overline{Y}\in\Gamma(N\mathcal{F}),\beta\in\Gamma(N^{*}\mathcal{F}).
	\end{equation}
	Of particular interest to us is the case in which the foliation $\mathcal{F}$ is transversely symplectic, i.e. $T\mathcal{F}=\ker\omega_C$ for a closed two-form $\omega_C\in\Omega^{2}(C)$. Then we have a vector bundle isomorphism $\omega_C^{\flat}:N\mathcal{F}\rightarrow N^{*}\mathcal{F}$, and it was noticed in \cite[Lemma 5.2]{koszul} that this isomorphism is compatible with the flat connections $\nabla$ and $\nabla^{*}$, i.e.
	\[
	\nabla^{*}_{X}\big(\omega_C^{\flat}(\overline{Y})\big)=\omega_C^{\flat}\big(\nabla_{X}\overline{Y}\big),\hspace{0.5cm} X\in\Gamma(T\mathcal{F}),\overline{Y}\in\Gamma(N\mathcal{F}).
	\]
	It follows that $\omega_C^{\flat}$ induces an isomorphism of complexes
	\[
	\text{Id}\otimes\omega_C^{\flat}:\big(\Gamma(\wedge^{\bullet}T^{*}\mathcal{F}\otimes N\mathcal{F}),d_{\nabla}\big)\overset{\sim}{\longrightarrow}\big(\Gamma(\wedge^{\bullet}T^{*}\mathcal{F}\otimes N^{*}\mathcal{F}),d_{\nabla^{*}}\big), 
	\]
	hence also an isomorphism in cohomology $ H^{\bullet}(\mathcal{F};N\mathcal{F})\overset{\sim}{\rightarrow} H^{\bullet}(\mathcal{F};N^{*}\mathcal{F})$.

	\section{First order deformations and the transverse differentiation map}\label{sec:two}
	
	In this section, we investigate what happens at the infinitesimal level when deforming a coisotropic submanifold $C$ inside the class $\text{Def}_{\mathcal{F}}(C)$. We argue that first order deformations of $C$ are leafwise closed one-forms $\alpha\in\Omega^{1}(\mathcal{F})$ whose cohomology class $[\alpha]$ lies in the kernel of a certain transverse differentiation map $d_{\nu}:H^{1}(\mathcal{F})\rightarrow H^{1}(\mathcal{F};N^{*}\mathcal{F})$.
	
	\subsection{Smooth paths in $\mathbf{\text{Def}^{U}_{\mathcal{F}}(C)}$}
	\vspace{0.1cm}
	\noindent
	
	In order to linearize the conditions in Def. \ref{definition}, we first need to specify what are smooth paths in the local deformation space $\mathbf{\text{Def}^{U}_{\mathcal{F}}(C)}$. We will start by defining a notion of smooth path in $\text{Def}_{\mathcal{F}}(C)$, which we use to induce a notion of smooth path in $\mathbf{\text{Def}^{U}_{\mathcal{F}}(C)}$.
	
	\begin{defi}\label{def:smoothness}
		A path $C_t$ in $\text{Def}_{\mathcal{F}}(C)$ is smooth if there is a smooth path of embeddings $\Phi_t:C\hookrightarrow M$ such that $C_{t}=\Phi_t(C)$ and $\Phi_t:(C,\mathcal{F})\rightarrow (C_t,\mathcal{F}_{t})$ is a foliated diffeomorphism. 
	\end{defi}
	
	In what follows, it is crucial that a smooth path in $\text{Def}_{\mathcal{F}}(C)$ comes with a smooth family of foliated diffeomorphisms. We comment some more on Def.~\ref{def:smoothness} in the following remark.

	\begin{remark}\label{smoothness}
		Note that there is a $1:1$ correspondence between the spaces
		\[
		\text{Emb}_{\mathcal{F}}(C):=\left\{\text{Embeddings}\ \Phi:C\hookrightarrow M:\ \begin{cases}
			\Phi(C)\ \text{is coisotropic}\\
			\iota_{v}\Phi^{*}\omega=0\ \ \forall v\in T\mathcal{F}
		\end{cases}\right\}
		\]
		and
		\[
		\text{Pairs}_{\mathcal{F}}(C):=\left\{(C',\phi)\in \text{Def}_{\mathcal{F}}(C)\times\text{Diff}(C,C')|\ \phi:(C,\mathcal{F})\overset{\sim}{\rightarrow}(C',\mathcal{F}')\right\}.
		\]
		There is an obvious notion of smooth path in $\text{Emb}_{\mathcal{F}}(C)$, while $\text{Pairs}_{\mathcal{F}}(C)$ clearly surjects onto $\text{Def}_{\mathcal{F}}(C)$. We obtain our notion of smooth path in $\text{Def}_{\mathcal{F}}(C)$, by declaring a path $(C_t)$ in $\text{Def}_{\mathcal{F}}(C)$ to be smooth if it lifts to a smooth path in $\text{Pairs}_{\mathcal{F}}(C)\cong\text{Emb}_{\mathcal{F}}(C)$.
	\end{remark}
	
	We now make a choice of tubular neighborhood of $C$ via Gotay's theorem, and we restrict to $\mathcal{C}^{1}$-small deformations of $C$ that stay inside this neighborhood $(U,\Omega_{G})$.
	
	\begin{defi}\label{def:smoothpath}
		A path $\alpha_t$ in $\text{Def}^{U}_{\mathcal{F}}(C)$ is smooth if there is a smooth path of embeddings $\Phi_t:C\hookrightarrow U$ such that $\text{Graph}(\alpha_t)=\Phi_t(C)$ and $p\circ\Phi_t:(C,\ker\omega_{C})\rightarrow\big(C,\ker(\omega_{C}-d(j(\alpha_t)))\big)$ is a foliated diffeomorphism.
	\end{defi}
	
	\begin{remark}\label{rem:pairs}
		As in Rem.~\ref{smoothness}, there is a $1:1$ correspondence between the spaces
		\[
		\text{Emb}_{\mathcal{F}}^{U}:=\left\{\text{Embeddings}\ \Phi:C\hookrightarrow U:\ \begin{cases}
			\Phi(C)=\text{Graph}(\alpha)\ \text{for some}\ \alpha\in\Gamma(U)\\
			\text{Graph}(\alpha)\ \text{is coisotropic}\\
			\iota_{v}\Phi^{*}\Omega_G=0\ \ \forall v\in T\mathcal{F}
		\end{cases}\right\}
		\]
		and
		\[
		\text{Pairs}_{\mathcal{F}}^{U}(C):=\left\{(\alpha,\phi)\in \text{Def}_{\mathcal{F}}^{U}(C)\times\text{Diff}(C)|\ \phi:(C,\ker\omega_{C})\overset{\sim}{\rightarrow}\big(C,\ker(\omega_{C}-d(j(\alpha)))\big)\right\}.
		\]
		Explicitly, the correspondence is given by
		\[
		\text{Emb}_{\mathcal{F}}^{U}\rightarrow \text{Pairs}_{\mathcal{F}}^{U}(C):
		\Phi\rightarrow\big(\Phi\circ(p\circ\Phi)^{-1},p\circ\Phi\big),
		\]
		with inverse
		\[
		\text{Pairs}_{\mathcal{F}}^{U}(C)\rightarrow\text{Emb}_{\mathcal{F}}^{U}:(\alpha,\phi)\rightarrow\alpha\circ\phi.
		\]
		Again, there is an obvious notion of smooth path in $\text{Emb}_{\mathcal{F}}^{U}$, while $\text{Pairs}_{\mathcal{F}}^{U}(C)$ clearly surjects onto $\text{Def}_{\mathcal{F}}^{U}(C)$. We then obtain our notion of smooth path in $\text{Def}_{\mathcal{F}}^{U}(C)$, by declaring a path $(\alpha_t)$ in $\text{Def}_{\mathcal{F}}^{U}(C)$ to be smooth if it lifts to a smooth path in $\text{Pairs}_{\mathcal{F}}^{U}(C)\cong\text{Emb}_{\mathcal{F}}^{U}(C)$.
	\end{remark}

	\subsection{First order deformations}
	\vspace{0.1cm}
	\noindent
	
	Given a smooth path $\alpha_t$ in $\text{Def}^{U}_{\mathcal{F}}(C)$ deforming $C$, we will now figure out what properties are satisfied by the corresponding infinitesimal deformation $\dt{\alpha_0}$. We first give a non-canonical description of infinitesimal deformations in terms of a certain chain map. We then rephrase this description in a canonical way by passing to cohomology.
	
	\subsubsection{A provisional definition} One would expect that in some way, the infinitesimal deformation $\dt{\alpha_0}$ gives rise to a trivial infinitesimal deformation of the foliation $\mathcal{F}$, namely a $1$-coboundary in the Bott complex $\big(\Omega^{\bullet}(\mathcal{F};G),d_{\nabla}\big)$.
	We claim that this happens by means of the following map, which we denote provisionally by $\Phi$.
	
	\begin{defi}\label{phi}
		Let $\Phi:\Gamma(\wedge^{k}T^{*}\mathcal{F})\rightarrow\Gamma(\wedge^{k}T^{*}\mathcal{F}\otimes G)$ denote the map defined by
		\[
		\left\langle \Phi(\alpha)(V_{1},\ldots,V_{k}),\beta\right\rangle=d(j(\alpha))\big(V_{1},\ldots,V_{k},(\omega_{C}^{\flat})^{-1}(\beta)\big)
		\]
		for $\alpha\in\Gamma(\wedge^{k}T^{*}\mathcal{F}), \beta\in\Gamma(G^{*})$ and $V_{1},\ldots,V_{k}\in\Gamma(T\mathcal{F})$.
	\end{defi}
	
	In other words, the map $\Phi$ is defined as follows. For $\alpha\in\Gamma(\wedge^{k}T^{*}\mathcal{F})$, we have that
	\[
	d(j(\alpha))\in\Gamma(\wedge^{k+1}T^{*}\mathcal{F})\oplus\Gamma(\wedge^{k}T^{*}\mathcal{F}\otimes G^{*})\oplus\Gamma(\wedge^{k-1}T^{*}\mathcal{F}\otimes\wedge^{2}G^{*}).
	\] 
	The map $\Phi$ picks the component in $\Gamma(\wedge^{k}T^{*}\mathcal{F}\otimes G^{*})$, and then applies the isomorphism 
	\[
	-\text{Id}\otimes(\omega_{C}^{\flat})^{-1}:\wedge^{k}T^{*}\mathcal{F}\otimes G^{*}\rightarrow \wedge^{k}T^{*}\mathcal{F}\otimes G.
	\]
	
	\begin{lemma}\label{inf}
		Let $C\subset(M,\omega)$ be a compact coisotropic submanifold with local model $(U,\Omega_G)$. Assume that $\alpha_t$ is a smooth curve in $\text{Def}_{\mathcal{F}}^{U}(C)$ passing through $C$ at time $t=0$. Then the infinitesimal deformation $\dt{\alpha_0}$ satisfies the following:
		\begin{enumerate}[i)]
			\item $d_{\mathcal{F}}\dt{\alpha_0}=0$, 
			\item $\Phi(\dt{\alpha_0})$ is exact in  $\big(\Omega^{\bullet}(\mathcal{F};G),d_{\nabla}\big)$.
		\end{enumerate}
	\end{lemma}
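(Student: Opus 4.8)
The plan is to treat the two assertions separately: $(i)$ follows from the coisotropic deformation theory already in place, while $(ii)$ comes from the foliation deformation theory, the two being linked by an explicit differentiation of the kernel condition. For $(i)$, I would first note that a smooth path $\alpha_t$ in $\text{Def}^{U}_{\mathcal{F}}(C)$ is in particular a smooth path of coisotropic sections of $U$ with $\alpha_0=0$: by Def.~\ref{def:smoothpath} we have $\text{Graph}(\alpha_t)=\Phi_t(C)$ for a smooth family of embeddings, so $\alpha_t=\Phi_t\circ(p\circ\Phi_t)^{-1}$ is smooth in $t$, and each $\text{Graph}(\alpha_t)$ is coisotropic because $\alpha_t\in\text{Def}^{U}_{\mathcal{F}}(C)$. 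Hence Lemma~\ref{lem:infcoiso} applies directly and gives $d_{\mathcal{F}}\dt{\alpha_0}=0$.

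For $(ii)$, the strategy is that deforming $C$ inside $\text{Def}^{U}_{\mathcal{F}}(C)$ moves the characteristic foliation by an isotopy, so the induced infinitesimal deformation of $\mathcal{F}$ is exact, and then to identify that infinitesimal deformation with $\Phi(\dt{\alpha_0})$ up to sign. Concretely, put $\sigma_t:=\omega_C-d(j(\alpha_t))$; by Prop.~\ref{F} this is a closed two-form of constant rank equal to that of $\omega_C$, so $\mathcal{F}_t:=\ker\sigma_t$ is a foliation, namely the characteristic foliation of $\text{Graph}(\alpha_t)$ transported to $C$ via $\tau_{\alpha_t}$. The maps $\phi_t:=p\circ\Phi_t$ from Def.~\ref{def:smoothpath} are foliated diffeomorphisms $(C,\mathcal{F})\to(C,\mathcal{F}_t)$, so $(\phi_t)_*T\mathcal{F}=T\mathcal{F}_t$; after replacing $\phi_t$ by $\phi_t\circ\phi_0^{-1}$, which leaves $\alpha_t$ unchanged and is allowed since $\phi_0$ is a foliated self-diffeomorphism of $(C,\mathcal{F})$, we may assume $\phi_0=\text{Id}$. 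Writing $T\mathcal{F}_t=\text{Graph}(\eta_t)$ with $\eta_t\in\Gamma(T^{*}\mathcal{F}\otimes G)$ as in the setup of Lemma~\ref{lem:inffol}, part $(2)$ of that lemma then tells us that $\dt{\eta_0}=d_\nabla(\text{pr}_G V_0)$ is exact in $\big(\Omega^{\bullet}(\mathcal{F};G),d_\nabla\big)$, with $V_t$ the time-dependent generator of $(\phi_t)$.

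It then remains to compute $\dt{\eta_0}$ in terms of $\dt{\alpha_0}$. For $X\in\Gamma(T\mathcal{F})$ the field $X+\eta_t(X)$ lies in $\ker\sigma_t$, i.e. $\iota_{X+\eta_t(X)}\sigma_t=0$; differentiating at $t=0$ and using $\eta_0=0$, $\sigma_0=\omega_C$ and $\dt{\sigma_0}=-d(j(\dt{\alpha_0}))$ gives
\[
\iota_{\dt{\eta_0}(X)}\omega_C=\iota_X\,d(j(\dt{\alpha_0})).
\]
Evaluating both sides on an arbitrary $g\in G$ and comparing with Def.~\ref{phi}, I expect $\omega_C(\dt{\eta_0}(X),g)=d(j(\dt{\alpha_0}))(X,g)=\omega_C(-\Phi(\dt{\alpha_0})(X),g)$; since $\omega_C$ restricts to a nondegenerate pairing on $G\cong N\mathcal{F}$ and both $\dt{\eta_0}(X)$ and $\Phi(\dt{\alpha_0})(X)$ take values in $G$, this forces $\Phi(\dt{\alpha_0})=-\dt{\eta_0}$. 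As exactness is linear, $\Phi(\dt{\alpha_0})$ is then exact in $\big(\Omega^{\bullet}(\mathcal{F};G),d_\nabla\big)$, which is $(ii)$.

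The step I expect to be most delicate is this final identification: one must keep careful track of the identifications $G\cong N\mathcal{F}$ and $G^{*}\cong N^{*}\mathcal{F}$ together with the sign conventions hidden in $\omega_C^{\flat}$ and in the definition of $\Phi$, and check that the $\eta_t$ read off from Def.~\ref{def:smoothpath} is measured against the very complement $G$ entering both $\Phi$ and the Bott complex. The rest is a matter of quoting Lemma~\ref{lem:infcoiso} and Lemma~\ref{lem:inffol}.
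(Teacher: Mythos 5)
Your proposal is correct and takes essentially the same approach as the paper's proof: item $i)$ by citing Lemma~\ref{lem:infcoiso}, and item $ii)$ by writing the deformed characteristic foliations as graphs of $\eta_t\in\Gamma(T^{*}\mathcal{F}\otimes G)$, differentiating the kernel condition to obtain $\Phi(\dt{\alpha_0})=-\dt{\eta_0}$, and then using the foliated diffeomorphisms supplied by Def.~\ref{def:smoothpath} (normalized so that $\phi_0=\text{Id}$) together with part $(2)$ of Lemma~\ref{lem:inffol} to conclude exactness. Your sign bookkeeping in the final identification agrees with the paper's computation, so there is nothing to correct.
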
 
	\begin{proof}
		We already know that item $i)$ holds by Lemma \ref{lem:infcoiso}, so we only have to prove $ii)$. Compactness of $C$ implies that there exists $\epsilon>0$ such that $\ker\big(\omega_{C}-d(j(\alpha_t))\big)\subset T\mathcal{F}\oplus G$ is still transverse to $G$ for all $0\leq t\leq\epsilon$.  This means that there exist $\eta_t\in\Gamma(T^{*}\mathcal{F}\otimes G)$ such that $\ker\big(\omega_{C}-d(j(\alpha_t))\big)=\text{Graph}(\eta_{t})$ for all $t\in[0,\epsilon]$. 
		So for all $v\in T\mathcal{F}$, we have that $v+\eta_t(v)\in\ker\big(\omega_{C}-d(j(\alpha_t))\big)$, it follows that
		\[
		\big(\omega_{C}-d(j(\alpha_t))\big)^{\flat}(\eta_t(v))=d(j(\alpha_t))^{\flat}(v).
		\]
		Differentiating at $t=0$, we get 
		\[
		\omega_{C}^{\flat}\big(\dt{\eta_0}(v)\big)=d(j(\dt{\alpha_0}))^{\flat}(v).
		\]
		Consequently, for any $\beta\in G^{*}$, we have
		\begin{align*}
			\big\langle\Phi(\dt{\alpha_0})(v),\beta\big\rangle&=d(j(\dt{\alpha_0}))\big(v,(\omega_{C}^{\flat})^{-1}(\beta)\big)\\
			&=\big\langle \omega_{C}^{\flat}\big(\dt{\eta_0}(v)\big),(\omega_{C}^{\flat})^{-1}(\beta)\big\rangle\\
			&=\big\langle -\dt{\eta_0}(v),\beta\big\rangle,
		\end{align*}
		showing that $\Phi(\dt{\alpha_0})=-\dt{\eta_0}$. It remains to argue that $\dt{\eta_0}$ is a coboundary in $\big(\Omega^{\bullet}(\mathcal{F};G),d_{\nabla}\big)$.
		Since $\alpha_t$ is a smooth path in $\text{Def}^{U}_{\mathcal{F}}(C)$, Def. \ref{def:smoothpath} guarantees that there exists a smooth family $(\phi_t)\in\text{Diff}(C)$ such that
		\[
		\phi_t:(C,T\mathcal{F})\overset{\sim}{\rightarrow}\big(C,\text{Graph}(\eta_{t})\big).
		\]
		Precomposing $\phi_t$ with $\phi_{0}^{-1}$, we can assume that $\phi_0=\text{Id}_{C}$, so that the family of foliations $\text{Graph}(\eta_t)$ for $t\in[0,\epsilon]$ is generated by applying an isotopy to $T\mathcal{F}$. Part $(2)$ of Lemma \ref{lem:inffol} now implies that $\dt{\eta_0}$ is indeed exact in $\big(\Omega^{\bullet}(\mathcal{F};G),d_{\nabla}\big)$. This finishes the proof. 
	\end{proof}
	
	The above lemma motivates the following provisional definition.
	
	\begin{defi}[Provisional]\label{def:provisional}
		When deforming $C$ inside $\text{Def}_{\mathcal{F}}(C)$, a first order deformation is a foliated one-form $\beta\in\Omega^{1}(\mathcal{F})$ such that $d_{\mathcal{F}}\beta=0$ and $\Phi(\beta)$ is exact in $\big(\Omega^{\bullet}(\mathcal{F};G),d_{\nabla}\big)$.
	\end{defi}

\subsubsection{A canonical definition}\label{subsubsec:canonical}
	Def.~\ref{def:provisional} is not entirely satisfactory, since it makes reference to the chosen complement $G$. In what follows, we derive an equivalent definition which is completely canonical. This is done by showing that $\Phi$ is a chain map (up to sign), and that the induced map in cohomology is canonical (i.e. independent of the complement $G$). The proofs of these statements use ingredients from the spectral sequence of the foliation $\mathcal{F}$. We defer the proofs to \S\ref{sec:appendix_transdiff} in the Appendix, to avoid disrupting the flow of the paper. 
	
	\bigskip
	
	The description of the map in cohomology induced by $\Phi$ involves the following well-known operation in foliation theory. Recall that
	any foliation $\mathcal{F}$ comes with a \textbf{transverse differentiation map}, denoted by
	\begin{equation}\label{eq:dnu}
		d_{\nu}:H^{\bullet}(\mathcal{F})\rightarrow H^{\bullet}(\mathcal{F};N^{*}\mathcal{F}).
	\end{equation}
	This map is constructed as follows \cite{osorno}. If $r:\Omega^{\bullet}(C)\rightarrow\Omega^{\bullet}(\mathcal{F})$ denotes the restriction to the leaves of $\mathcal{F}$, then there is a short exact sequence of complexes
	\[
	0\rightarrow \left(\Omega^{\bullet}_{\mathcal{F}}(C),d\right)\hookrightarrow\left(\Omega^{\bullet}(C),d\right)\overset{r}{\rightarrow}\left(\Omega^{\bullet}(\mathcal{F}),d_{\mathcal{F}}\right)\rightarrow 0.
	\]
	It induces a long exact sequence in cohomology
	\[
	\cdots\rightarrow H^{k}_{\mathcal{F}}(C)\rightarrow H^{k}(C)\rightarrow H^{k}(\mathcal{F})\overset{\mathfrak{d}}{\rightarrow} H^{k+1}_{\mathcal{F}}(C)\rightarrow H^{k+1}(C)\rightarrow H^{k+1}(\mathcal{F})\rightarrow\cdots.
	\]
	The connecting homomorphism $\mathfrak{d}$ is defined as
	\begin{equation}\label{connecting}
		\mathfrak{d}:H^{k}(\mathcal{F})\rightarrow H^{k+1}_{\mathcal{F}}(C):[\alpha]\mapsto[d\widetilde{\alpha}],
	\end{equation}
	where $\widetilde{\alpha}\in\Omega^{k}(C)$ is any extension of $\alpha\in\Omega^{k}(\mathcal{F})$. Next, let
	$p:\Omega^{k+1}_{\mathcal{F}}(C)\rightarrow\Omega^{k}(\mathcal{F},N^{*}\mathcal{F})$ denote the map characterised by
	\[
	\left\langle p(\beta)(V_1,\ldots,V_k),\overline{N}\right\rangle=\beta(V_1,\ldots,V_k,N)
	\]
	for $V_1,\ldots,V_k\in\Gamma(T\mathcal{F})$ and $\overline{N}\in\Gamma(N\mathcal{F})$. This map is well-defined because forms in $\Omega^{\bullet}_{\mathcal{F}}(C)$ vanish when evaluated on elements of $\Gamma(T\mathcal{F})$. Since $p$ commutes with the differentials $d$ and $d_{\nabla^{*}}$, there is an induced map in cohomology 
	\begin{equation}\label{tau}
		[p]:H^{k+1}_{\mathcal{F}}(C)\rightarrow H^{k}(\mathcal{F};N^{*}\mathcal{F}).
	\end{equation}
	The transverse differentiation map \eqref{eq:dnu} is obtained by composing the maps \eqref{connecting} and \eqref{tau}.
	
	\begin{defi}
	The \textbf{transverse differentiation map} $d_{\nu}$ is defined by
	\[
	d_{\nu}:H^{k}(\mathcal{F})\rightarrow H^{k}(\mathcal{F};N^{*}\mathcal{F}):[\alpha]\mapsto[p(d\tilde{\alpha})],
	\]
	where $\widetilde{\alpha}\in\Omega^{k}(C)$ is any extension of $\alpha\in\Omega^{k}(\mathcal{F})$.
	\end{defi}  

\begin{remark}
	The map $d_{\nu}$ appears for instance in the study of symplectic foliations $(\mathcal{F},\omega)$. There $d_{\nu}[\omega]\in H^{2}(\mathcal{F};N^{*}\mathcal{F})$ measures the transverse variation of the leafwise symplectic form $\omega\in\Omega^{2}(\mathcal{F})$, which plays a role in the integrability problem for the associated regular Poisson structure \cite[\S 5]{crainic-fernandes}.
\end{remark}
	
	The following result allows us to describe first order deformations of $C$ inside $\text{Def}_{\mathcal{F}}(C)$ in a canonical way. For the proof, we refer to \S\ref{sec:appendix_transdiff} in the Appendix.
	
	\begin{prop}\label{prop:rephrase-canonical}
	\begin{enumerate}[i)]
	\item The map
		$$
		\Phi:\big(\Omega^{\bullet}(\mathcal{F}),d_{\mathcal{F}}\big)\rightarrow\big(\Omega^{\bullet}(\mathcal{F};G),d_{\nabla}\big)
		$$
		is a chain map, up to sign. That is, $\Phi\circ d_{\mathcal{F}}=-d_{\nabla}\circ\Phi$.		
  \item The map $[\Phi]$ induced in cohomology is canonical, since it agrees with
  \[
  \big[-\text{Id}\otimes(\omega_{C}^{\flat})^{-1}\big]\circ d_{\nu}:H^{\bullet}(\mathcal{F})\rightarrow H^{\bullet}(\mathcal{F};N\mathcal{F}).
  \]
  \end{enumerate}
\end{prop}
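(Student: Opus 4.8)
The plan is to reduce both parts to the bigraded decomposition of the de Rham differential on $C$ induced by the splitting $TC=T\mathcal{F}\oplus G$. Calling a section of $\wedge^{a}T^{*}\mathcal{F}\otimes\wedge^{b}G^{*}$ a form of type $(a,b)$ (and recalling $G^{*}\cong N^{*}\mathcal{F}$, $G\cong N\mathcal{F}$), the exterior derivative splits as $d=d_{1,0}+d_{0,1}+d_{-1,2}$, where the subscripts record the bidegree shift and $d_{-1,2}$ is the curvature term measuring the non-integrability of $G$. First I would record the reformulation
\[
\Phi=-(\text{Id}\otimes(\omega_{C}^{\flat})^{-1})\circ d_{0,1}\circ j,
\]
which merely repackages Def.~\ref{phi}: since $j\alpha$ has pure type $(\bullet,0)$, its image under $d$ has $(k,1)$-component exactly $d_{0,1}(j\alpha)$, and a short pairing computation using the antisymmetry of $\omega_{C}$ produces the sign. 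I would also record the two identifications that power the argument: $d_{\mathcal{F}}=d_{1,0}$ under $j$, and, on type-$(\bullet,1)$ forms (that is, $N^{*}\mathcal{F}$-valued foliated forms), the leafwise component $d_{1,0}$ equals the Bott differential $d_{\nabla^{*}}$. The latter is the identification of the relevant $E_{1}$-differential of the foliation spectral sequence, and is the one genuinely technical input; I would verify it by comparing Koszul formulae and defer the bookkeeping to the appendix.

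For part (i), I would feed these identifications into a direct computation. Using $j(d_{\mathcal{F}}\alpha)=d_{1,0}(j\alpha)$ gives $\Phi(d_{\mathcal{F}}\alpha)=-(\text{Id}\otimes(\omega_{C}^{\flat})^{-1})\,d_{0,1}d_{1,0}(j\alpha)$. On the other side, since $\omega_{C}^{\flat}$ intertwines $\nabla$ and $\nabla^{*}$ (the fact from \cite{koszul} recalled above), one has $d_{\nabla}=(\text{Id}\otimes(\omega_{C}^{\flat})^{-1})\circ d_{\nabla^{*}}\circ(\text{Id}\otimes\omega_{C}^{\flat})$; combined with $d_{\nabla^{*}}=d_{1,0}$ this yields $d_{\nabla}(\Phi(\alpha))=-(\text{Id}\otimes(\omega_{C}^{\flat})^{-1})\,d_{1,0}d_{0,1}(j\alpha)$. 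Hence the claimed identity $\Phi\circ d_{\mathcal{F}}=-d_{\nabla}\circ\Phi$ is equivalent to $d_{0,1}d_{1,0}+d_{1,0}d_{0,1}=0$, which is precisely the bidegree-$(+1,+1)$ component of $d^{2}=0$ (and, notably, involves neither $d_{-1,2}$ nor the curvature of $G$).

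For part (ii), which now makes sense because (i) shows $\Phi$ descends to cohomology, I would compute $d_{\nu}$ using the particular extension $\widetilde{\alpha}=j\alpha$, legitimate since $d_{\nu}$ is independent of the chosen extension. When $\alpha$ is $d_{\mathcal{F}}$-closed the $(k+1,0)$-component $d_{1,0}(j\alpha)=j(d_{\mathcal{F}}\alpha)$ vanishes, so $d(j\alpha)\in\Omega^{k+1}_{\mathcal{F}}(C)$ and, straight from its defining formula, $p(d(j\alpha))$ is exactly the $(k,1)$-component $d_{0,1}(j\alpha)$ read as an $N^{*}\mathcal{F}$-valued form. Thus $d_{\nu}[\alpha]=[d_{0,1}(j\alpha)]$, and applying the chain map $[-\text{Id}\otimes(\omega_{C}^{\flat})^{-1}]$ returns $[\Phi(\alpha)]$. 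Since $d_{\nu}$ is built only from extensions to $\Omega^{\bullet}(C)$ and the canonical map $p$, while $\omega_{C}^{\flat}$ depends only on $\omega_{C}$, the composite is manifestly independent of $G$, which is the asserted canonicity.

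The main obstacle is the clean identification $d_{1,0}=d_{\nabla^{*}}$ on $N^{*}\mathcal{F}$-valued foliated forms, together with keeping the three pairings ($G\times G^{*}$, $N\mathcal{F}\times N^{*}\mathcal{F}$, and $\omega_{C}^{\flat}$) and their signs mutually consistent. Once this $E_{1}$-level dictionary is in place, both parts collapse to the single relation $d_{0,1}d_{1,0}+d_{1,0}d_{0,1}=0$ and a one-line cohomology computation; this is exactly why the underlying spectral-sequence facts are relegated to \S\ref{sec:appendix_transdiff}.
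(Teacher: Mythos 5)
Your proposal is correct and follows essentially the same route as the paper's proof in \S\ref{sec:appendix_transdiff}: the same bigraded decomposition of $d$ induced by $TC=T\mathcal{F}\oplus G$, the identification of $d_{\mathcal{F}}$ and the Bott differential $d_{\nabla^{*}}$ with bihomogeneous components (the paper's Lemma~\ref{lem:chainmap}), the anticommutation relation extracted from $d^{2}=0$, and the computation of $d_{\nu}$ using the extension-by-zero $j\alpha$ (Cor.~\ref{cor:transversediff}). The only difference is notational: you place the leafwise arguments first, which makes your signs uniform, whereas the paper puts the $G^{*}$-factor first and consequently carries degree-dependent signs $(-1)^{k}$ through $\tau$ and Lemma~\ref{lem:des-phi}; the two conventions are equivalent.
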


Because the map $\big[-\text{Id}\otimes(\omega_{C}^{\flat})^{-1}\big]:H^{\bullet}(\mathcal{F};N^{*}\mathcal{F})\rightarrow H^{\bullet}(\mathcal{F};N\mathcal{F})$ appearing in Prop.~\ref{prop:rephrase-canonical} is an isomorphism, we can now rephrase Def.~\ref{def:provisional} as follows.

\begin{defi}\label{def:firstorderdefs}
	When deforming $C$ inside $\text{Def}_{\mathcal{F}}(C)$, a first order deformation is a foliated one-form $\beta\in\Omega^{1}(\mathcal{F})$ such that $d_{\mathcal{F}}\beta=0$ and the cohomology class $[\beta]\in H^{1}(\mathcal{F})$ lies in the kernel of the transverse differentiation map $d_{\nu}:H^{1}(\mathcal{F})\rightarrow H^{1}(\mathcal{F};N^{*}\mathcal{F})$.
\end{defi}
	
We finish this section by giving a geometric description of first order deformations. For the proof, see again \S\ref{sec:appendix_transdiff} in the Appendix.

\begin{lemma}\label{lem:extension}
	Let $C$ be a manifold with a foliation $\mathcal{F}$. For any $\beta\in\Omega^{1}(\mathcal{F})$, the following are equivalent:
	\begin{enumerate}
		\item $d_{\mathcal{F}}\beta=0$ and $d_{\nu}[\beta]=0$,
		\item There exists an extension $\widetilde{\beta}\in\Omega^{1}(C)$ of $\beta$ such that $\iota_{V}d\widetilde{\beta}=0$ for all $V\in\Gamma(T\mathcal{F})$.
	\end{enumerate}
\end{lemma}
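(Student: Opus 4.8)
The plan is to show that condition (2) is a pointwise refinement of the two conditions in (1), and that the only gap between the two formulations is governed by the freedom in choosing an extension. The starting observation is that, for $V\in\Gamma(T\mathcal{F})$, the one-form $\iota_{V}d\widetilde{\beta}$ on $C$ decomposes into a tangential and a transverse part. Pairing with $W\in\Gamma(T\mathcal{F})$ gives $d\widetilde{\beta}(V,W)=r(d\widetilde{\beta})(V,W)=(d_{\mathcal{F}}\beta)(V,W)$, which depends only on $\beta$ and not on the extension; pairing with a vector $N$ and invoking the defining property of $p$ gives $d\widetilde{\beta}(V,N)=\langle p(d\widetilde{\beta})(V),\overline{N}\rangle$. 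Hence $\iota_{V}d\widetilde{\beta}=0$ for all $V\in\Gamma(T\mathcal{F})$ if and only if $d_{\mathcal{F}}\beta=0$ and $p(d\widetilde{\beta})=0$ \emph{on the nose}. This reduces the whole lemma to matching the exact vanishing of $p(d\widetilde{\beta})$ with the cohomological vanishing of $d_{\nu}[\beta]$.

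For the implication $(2)\Rightarrow(1)$, I would take an extension $\widetilde{\beta}$ with $\iota_{V}d\widetilde{\beta}=0$ for all $V\in\Gamma(T\mathcal{F})$. The computation above immediately yields $d_{\mathcal{F}}\beta=0$, so $d\widetilde{\beta}$ lies in $\Omega^{2}_{\mathcal{F}}(C)$ and $d_{\nu}[\beta]=[p(d\widetilde{\beta})]$ is defined; but the same computation shows $p(d\widetilde{\beta})=0$, whence $d_{\nu}[\beta]=0$.

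For the converse $(1)\Rightarrow(2)$, I would begin with an arbitrary extension $\widehat{\beta}\in\Omega^{1}(C)$ of $\beta$. Since $d_{\mathcal{F}}\beta=0$ forces $r(d\widehat{\beta})=d_{\mathcal{F}}\beta=0$, the form $d\widehat{\beta}$ lies in $\Omega^{2}_{\mathcal{F}}(C)$ and $p(d\widehat{\beta})\in\Omega^{1}(\mathcal{F};N^{*}\mathcal{F})$ represents $d_{\nu}[\beta]$. By hypothesis this class vanishes, so there is $\sigma\in\Gamma(N^{*}\mathcal{F})$ with $d_{\nabla^{*}}\sigma=p(d\widehat{\beta})$. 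Using the canonical identification $N^{*}\mathcal{F}\cong T\mathcal{F}^{0}$, I regard $\sigma$ as a genuine one-form on $C$ lying in $\Omega^{1}_{\mathcal{F}}(C)$; by the degree-zero case of the defining formula for $p$, one has $p(\sigma)=\sigma$. The corrected extension is $\widetilde{\beta}:=\widehat{\beta}-\sigma$, which still restricts to $\beta$ on the leaves because $\sigma\in\Omega^{1}_{\mathcal{F}}(C)$. Since $p$ commutes with $d$ and $d_{\nabla^{*}}$, I compute $p(d\widetilde{\beta})=p(d\widehat{\beta})-p(d\sigma)=p(d\widehat{\beta})-d_{\nabla^{*}}\sigma=0$, and by the first paragraph this gives $\iota_{V}d\widetilde{\beta}=0$ for all $V\in\Gamma(T\mathcal{F})$.

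The crux of the argument—and the step I expect to require the most care—is the reduction in the first paragraph: one must verify that it is the \emph{exact} vanishing of $p(d\widetilde{\beta})$, and not merely the vanishing of its cohomology class, that corresponds to condition (2), and then pin down the correct space of admissible corrections to the extension. The key point is that replacing $\widehat{\beta}$ by $\widehat{\beta}-\sigma$ with $\sigma$ conormal leaves the leafwise restriction untouched while shifting $p(d\widehat{\beta})$ by precisely $d_{\nabla^{*}}\sigma$; it is the compatibility of the degree-zero piece of $p$ with the connecting map built into $d_{\nu}$ that makes the cohomological hypothesis $d_{\nu}[\beta]=0$ sufficient to produce an on-the-nose solution. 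The remaining verifications—that $r$ and $p$ are chain maps in the relevant degrees and that $d\sigma\in\Omega^{2}_{\mathcal{F}}(C)$—are routine and are already recorded in the construction of $d_{\nu}$.
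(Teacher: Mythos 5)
Your proof is correct, but it takes a genuinely different route from the paper's. The paper proves this lemma in its appendix by fixing a complement $G$ to $T\mathcal{F}$ and working in the induced bigrading: via the identifications in Lemma~\ref{lem:chainmap} and Cor.~\ref{cor:transversediff}, condition (1) is first translated into ``$d_{0,1}\beta=0$ and $d_{1,0}\beta$ is exact in $\big(\Omega^{1,\bullet}(C),d_{0,1}\big)$'', after which both implications are bidegree bookkeeping for the splitting $d=d_{0,1}+d_{1,0}+d_{2,-1}$; the corrected extension there is $j(\beta)-\gamma$ with $\gamma\in\Gamma(G^{*})$ satisfying $d_{0,1}\gamma=d_{1,0}\beta$. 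You instead work complement-free, directly from the definition of $d_{\nu}$ as the composite of the connecting homomorphism with $[p]$: you reduce condition (2) to the on-the-nose vanishing of $p(d\widetilde{\beta})$, and then use the exactness $p(d\widehat{\beta})=d_{\nabla^{*}}\sigma$ together with the chain-map property of $p$ (which the paper asserts when constructing $d_{\nu}$, and which is indeed a routine check with the Bott connection) to correct an arbitrary extension by the conormal one-form $\sigma\in\Gamma(N^{*}\mathcal{F})\cong\Gamma(T\mathcal{F}^{0})$. Under the isomorphism $G^{*}\cong T\mathcal{F}^{0}$ the two corrections match, so the arguments are parallel in substance; what yours buys is independence from the appendix machinery and from any choice of complement, which makes the geometric content transparent --- the class $d_{\nu}[\beta]$ is precisely the obstruction to improving a given extension, with the gauge freedom being conormal one-forms --- whereas the paper's version comes essentially for free once the bigraded formalism (needed anyway for Prop.~\ref{prop:rephrase-canonical}) is in place, reducing everything to mechanical component-wise checks.
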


Recall that a differential form $\eta$ on a foliated manifold $(C,\mathcal{F})$ is called \textbf{basic} if $\iota_{V}\eta=0$ and $\pounds_{V}\eta=0$ for all $V\in\Gamma(T\mathcal{F})$. If the foliation $\mathcal{F}$ is simple, this means that $\eta$ descends to the leaf space $C/\mathcal{F}$. The basic differential forms constitute a subcomplex $(\Omega^{\bullet}_{bas}(C),d)$ of the de Rham complex.	
Statement $(2)$ in Lemma~\ref{lem:extension} says that  $d\tilde{\beta}\in\Omega^{2}_{bas}(C)$. 	
	
\begin{ex}[The integral case]\label{ex:integral}
	Let $C$ be a compact coisotropic submanifold whose characteristic foliation $\mathcal{F}$ is given by the fibers of a smooth fiber bundle $pr:C\rightarrow C/\mathcal{F}$. Such coisotropic submanifolds are called \textbf{integral}, and they are studied in \cite{ruan}. To facilitate the computation of first order deformations of such $C$, it is useful to note that the cohomology groups $H^{1}(\mathcal{F})$ and $H^{1}(\mathcal{F};N^{*}\mathcal{F})$ have the following convenient descriptions \cite[Thm.~I.5.2]{hoster}.
	
	\begin{itemize}
		\item The first cohomology groups of the fibers of $pr$ constitute a vector bundle $\mathcal{H}^{1}$ over $C/\mathcal{F}$, i.e. 
		\begin{equation}\label{eq:vb}
			\mathcal{H}^{1}_{q}=H^{1}(pr^{-1}(q))\hspace{0.5cm}\forall q\in C/\mathcal{F},
		\end{equation}
		and $H^{1}(\mathcal{F})$ can be identified canonically with the space of sections $\Gamma(\mathcal{H}^{1})$. Namely, the class $[\beta]\in H^{1}(\mathcal{F})$ corresponds with the section $\tau_{\beta}\in\Gamma(\mathcal{H}^{1})$ given by
		\[
		\tau_{\beta}:q\mapsto\left[\beta|_{pr^{-1}(q)}\right].
		\]
		Moreover, the vector bundle $\mathcal{H}^{1}$ carries a natural flat connection $\nabla$, called the Gauss-Manin connection, which is defined as follows. Denoting by $\mathfrak{X}(C)^{\mathcal{F}}$ the Lie subalgebra of $(\mathfrak{X}(C),[\cdot,\cdot])$ consisting of projectable vector fields,
		\[
		\mathfrak{X}(C)^{\mathcal{F}}:=\left\{Y\in\mathfrak{X}(C):[Y,\Gamma(T\mathcal{F})]\subset\Gamma(T\mathcal{F})\right\},
		\]
		we have a short exact sequence
		\begin{equation}\label{seq}
			0\rightarrow\Gamma(T\mathcal{F})\hookrightarrow\mathfrak{X}(C)^{\mathcal{F}}\rightarrow\mathfrak{X}(C/\mathcal{F})\rightarrow 0.
		\end{equation}
		The Gauss-Manin connection is then defined as follows, for $V\in\mathfrak{X}(C/\mathcal{F})$:
		\[
		\nabla_{V}\tau_{\beta}:=\tau_{\pounds_{\widetilde{V}}\beta},
		\]
		where $\widetilde{V}\in\mathfrak{X}(C)^{\mathcal{F}}$ is any lift of $V$ in \eqref{seq}. The connection is well-defined because of Cartan's magic formula, and it is flat because $[\widetilde{V},\widetilde{W}]$ is a lift of $[V,W]$ whenever $\widetilde{V},\widetilde{W}\in\mathfrak{X}(C)^{\mathcal{F}}$ are lifts of $V,W\in\mathfrak{X}(C/\mathcal{F})$. 
		\item The cohomology group $H^{1}(\mathcal{F},N^{*}\mathcal{F})$ can be identified with the space of one-forms on $C/\mathcal{F}$ with values in the vector bundle $\mathcal{H}^{1}$:
		\begin{equation}\label{eq:isomconormal}
			H^{1}(\mathcal{F};N^{*}\mathcal{F})\cong\Gamma(T^{*}(C/\mathcal{F})\otimes\mathcal{H}^{1}).
		\end{equation}
	\end{itemize}
	
	We now compute the kernel of $d_{\nu}:H^{1}(\mathcal{F})\rightarrow H^{1}(\mathcal{F},N^{*}\mathcal{F})$. Fix a class $[\beta]\in H^{1}(\mathcal{F})$, and let $\widetilde{\beta}\in\Omega^{1}(C)$ be any extension of $\beta$. Denoting by $r:\Omega^{1}(C)\rightarrow\Omega^{1}(\mathcal{F})$ the restriction map and using the isomorphism \eqref{eq:isomconormal}, we have
	\begin{align*}
		[\beta]\in\ker(d_{\nu})&\Leftrightarrow r(\iota_{Y}d\widetilde{\beta})\ \text{is foliated exact for all}\ Y\in\mathfrak{X}(C)^{\mathcal{F}},\\
		&\Leftrightarrow\pounds_{Y}\beta\ \text{is foliated exact for all}\ Y\in\mathfrak{X}(C)^{\mathcal{F}},\\
		&\Leftrightarrow \tau_{\beta}\in\Gamma(\mathcal{H}^{1})\ \text{is flat w.r.t.}\ \nabla. 
	\end{align*}
	Hence, according to Def.~\ref{def:firstorderdefs}, first order deformations of an integral coisotropic submanifold are closed foliated one-forms that define flat sections of the vector bundle $\big(\mathcal{H}^1,\nabla\big)$.
	
	In \cite{ruan}, Ruan studies the deformation problem of an integral coisotropic submanifold $C$, within the class of integral coisotropic submanifolds. He shows that first order deformations in this case are indeed closed foliated one-forms on $C$ which define flat sections of $\big(\mathcal{H}^1,\nabla\big)$, see \cite[Lemma 2]{ruan}. Hence, our Def.~\ref{def:firstorderdefs} is consistent with Ruan's work.
\end{ex}

	\section{Unobstructedness of first order deformations}\label{sec:three}
	
	Let $C\subset(M,\omega)$ be a compact coisotropic submanifold. It is known that the coisotropic deformation problem of $C$ is obstructed in general, i.e. there may exist first order deformations of $C$ that are not tangent to any path of deformations \cite{marco}. However, Ruan proved that the deformation problem becomes unobstructed when restricting to integral coisotropic submanifolds \cite{ruan}, i.e. those for which the characteristic foliation is given by a fibration.
	
	This section contains our main result, which states that the deformation problem of a compact coisotropic submanifold $C$ \emph{inside the class $\text{Def}_{\mathcal{F}}(C)$} (see Def.~\ref{def:Defs}) is unobstructed. This is an extension of Ruan's unobstructedness result, since the first order deformations from Def.~\ref{def:firstorderdefs} reduce to those considered by Ruan in case $(C,\mathcal{F})$ is integral (see Ex.~\ref{ex:integral}).
	
	\begin{defi}
		Given a compact coisotropic submanifold $C\subset(M,\omega)$, a first order deformation $\beta$ of $C$ (see Def.~\ref{def:firstorderdefs}) is \textbf{unobstructed} if there exists a smooth path $\alpha_{t}$ in $\text{Def}_{\mathcal{F}}^{U}(C)$ (see Def.~\ref{def:smoothpath}) such that $\alpha_0=0$ and $\dt{\alpha_0}=\beta$. The deformation problem of $C$ inside $\text{Def}_{\mathcal{F}}(C)$ is unobstructed if all first order deformations are unobstructed.
	\end{defi}

	As a first step, we consider the distinguished class of first order deformations consisting of  foliated one-forms $\beta\in\Omega^{1}(\mathcal{F})$ that admit a closed extension $\widetilde{\beta}\in\Omega^{1}(C)$. Such $\beta$ indeed satisfy the requirements of Def.~\ref{def:firstorderdefs}. These first order deformations are easily proved to be unobstructed. The following lemma is just an enhancement of \cite[Rem.~4.6]{equivalences}. 
	
	\begin{lemma}\label{lem:closedext}
		Let $C\subset(M,\omega)$ be compact coisotropic submanifold with characteristic foliation $\mathcal{F}$. If $\beta\in\Omega^{1}(\mathcal{F})$ admits a closed extension $\widetilde{\beta}\in\Omega^{1}(C)$, then $\beta$ is an unobstructed first order deformation of $C$.
	\end{lemma}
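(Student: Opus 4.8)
The plan is to produce an explicit path of coisotropic sections tangent to $\beta$, using the closed extension $\widetilde{\beta}\in\Omega^{1}(C)$ to generate a symplectic flow on the Gotay model $(U,\Omega_G)$. The key observation is that a closed one-form on $C$ pulls back to a closed one-form on $U$ via the projection $p:T^{*}\mathcal{F}\to C$, and $p^{*}\widetilde{\beta}$ then determines a symplectic vector field on $(U,\Omega_G)$. First I would set $X$ to be the vector field defined by $\iota_{X}\Omega_G=-p^{*}\widetilde{\beta}$; since $d(p^{*}\widetilde{\beta})=p^{*}(d\widetilde{\beta})=0$, this $X$ is symplectic, so its flow $\varphi_t$ consists of symplectomorphisms of $(U,\Omega_G)$.

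\textbf{Producing the path and identifying its derivative.}
Next I would define $\alpha_t$ to be the section of $U$ whose graph is $\varphi_t(C)$, where $C$ is identified with the zero section. For small $t$ and after shrinking $U$ if necessary, $\varphi_t(C)$ remains a graph over $C$ by compactness, so $\alpha_t\in\Gamma(U)$ is well-defined and smooth in $t$, with $\alpha_0=0$. Because each $\varphi_t$ is a symplectomorphism carrying the coisotropic submanifold $C$ (the zero section is coisotropic since $\Omega_G$ restricts to $\omega_C$ there) to $\varphi_t(C)=\mathrm{Graph}(\alpha_t)$, each $\mathrm{Graph}(\alpha_t)$ is coisotropic. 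I then need to check $\dt{\alpha_0}=\beta$: differentiating the defining relation $\alpha_t=\varphi_t|_C$ (composed with the vertical projection) at $t=0$ yields the vertical component of $X$ along $C$, which by the definition of $\Omega_G=p^{*}\omega_C+j^{*}\omega_{can}$ and of $X$ equals $j(\beta)$ read as an element of $T^{*}\mathcal{F}$; restricting to leaves recovers $\dt{\alpha_0}=\beta$. This is the computation where I would invoke the explicit form of $\Omega_G$ and the tautological-form identity from the proof of Prop.~\ref{F}.

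\textbf{Verifying the foliated-diffeomorphism condition.}
The genuinely new content, beyond \cite[Rem.~4.6]{equivalences}, is that the path lies in $\text{Def}^{U}_{\mathcal{F}}(C)$, i.e. that the characteristic foliations stay diffeomorphic. Here the symplectomorphism $\varphi_t$ does the work: since $\varphi_t:(U,\Omega_G)\to(U,\Omega_G)$ is a symplectomorphism sending $C$ to $\mathrm{Graph}(\alpha_t)$, it automatically matches characteristic foliations, so $\varphi_t|_C:(C,\ker\omega_C)\to(\mathrm{Graph}(\alpha_t),\ker(i_{\alpha_t}^{*}\Omega_G))$ is a foliated diffeomorphism. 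Composing with $p|_{\mathrm{Graph}(\alpha_t)}$ as in Lemma~\ref{lem:compose} gives the foliated diffeomorphism $(C,\ker\omega_C)\to(C,\ker(\omega_C-d(j(\alpha_t))))$ required by Def.~\ref{definition}, and setting $\Phi_t:=\varphi_t|_C$ exhibits smoothness in the sense of Def.~\ref{def:smoothpath}.

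\textbf{Anticipated obstacle.}
The main obstacle is the careful identification $\dt{\alpha_0}=\beta$, since it requires tracking how the symplectic vector field $X$ decomposes relative to the splitting $TU|_C\cong T\mathcal{F}\oplus G\oplus T^{*}\mathcal{F}$ and confirming that the sign conventions in $\Omega_G$ produce exactly $\beta$ rather than a multiple of it; the coisotropy and foliated-diffeomorphism claims, by contrast, follow formally once the flow is recognized as symplectic. I would organize the proof so that the symplectic nature of $\varphi_t$ is established first, as it simultaneously delivers both coisotropy and the foliation-matching condition.
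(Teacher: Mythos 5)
Your proposal is correct and follows essentially the same route as the paper's proof: both define the symplectic vector field $X=(\Omega_G^{\flat})^{-1}(-p^{*}\widetilde{\beta})$, take $\alpha_t$ to be the sections whose graphs are the images of $C$ under the flow, obtain coisotropy and the foliated-diffeomorphism condition from the fact that the flow consists of symplectomorphisms, and identify $\dt{\alpha_0}=\beta$ via the vertical component of $X$ along $C$. The only cosmetic differences are that you justify foliation-matching by noting that symplectomorphisms preserve characteristic distributions, where the paper verifies $\psi_t^{*}(\alpha_t^{*}\Omega_G)=\omega_C$ by a pullback computation, and that the velocity identification which the paper delegates to Lemma~\ref{dF} is exactly the sign-tracking computation you flag as the main obstacle.
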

	\begin{proof}
		Let $(U,\Omega_{G})$ be the Gotay local model for some choice of splitting $TC=T\mathcal{F}\oplus G$, and denote by $p:U\rightarrow C$ the projection. By assumption, we have a closed one-form $p^{*}\tilde{\beta}$ on $U$, which gives rise to a symplectic vector field
		\[
		X:=(\Omega_{G}^{\flat})^{-1}(-p^{*}\widetilde{\beta}).
		\] 
		Let $(\phi_t)$ denote the flow of $X$, and note that compactness of $C$ implies that there exists $\epsilon>0$ such that flow lines of $X$ starting at points of $C$ exist up to time $\epsilon$. Shrinking $\epsilon$ if necessary, we can assume that the submanifolds $\phi_t(C)$ for $0\leq t<\epsilon$ are graphs of sections $\alpha_t\in\Gamma(U)$. We now check that $\alpha_t$ is a smooth path in $\text{Def}_{\mathcal{F}}^{U}(C)$, as defined in Def.~\ref{def:smoothpath}.
		
		It is clear that $\text{Graph}(\alpha_t)$ is coisotropic, since $C$ is coisotropic and the $\phi_t$ are symplectomorphisms. It remains to check that 
		\begin{equation}\label{eq:toshoww}
			\psi_t:=p\circ\phi_t\circ\alpha_0:(C,\ker\omega_{C})\rightarrow\big(C,\ker(\omega_{C}-d(j(\alpha_t)))\big)
		\end{equation}
		is a foliated diffeomorphism. To do so, note that 
		\[
		\alpha_t=\phi_t\circ\alpha_0\circ\psi_t^{-1}:C\rightarrow U.
		\]
		Since $\phi_t^{*}\Omega_G=\Omega_G$, we have
		\[
		\omega_C=\alpha_0^{*}\Omega_G=\alpha_0^{*}(\phi_t^{*}\Omega_G)=(\phi_t\circ\alpha_0)^{*}\Omega_G,
		\]
		and therefore
		\[
		\psi_{t}^{*}(\alpha_{t}^{*}\Omega_G)=(\alpha_t\circ\psi_t)^{*}\Omega_G=(\phi_t\circ\alpha_0)^{*}\Omega_G=\omega_C.
		\]
		Also invoking Prop.~\ref{F}, this confirms that \eqref{eq:toshoww} is a foliated diffeomorphism. 
		
		Finally, we check that $\beta$ is tangent to the path $\alpha_{t}$.  By \cite[Lemma 3.13]{equivalences}, we have that
		$\dt{\alpha}_{0}$ is the section of $T^{*}\mathcal{F}$ defined by the vertical fiberwise constant vector field $P(X)$ on $U$.
		Here $P$ denotes the restriction to $C$ composed with the vertical projection in the splitting $TU|_{C}=TC\oplus T^{*}\mathcal{F}$. Using Lemma~\ref{dF} in the Appendix, we conclude that
		\begin{equation}\label{eq:compdot}
			\dt{\alpha}_{0}=-r\left(i^{*}\Omega_G^{\flat}(P(X))\right)=-r\left(i^{*}\Omega_G^{\flat}(X)\right)=r(i^{*}(p^{*}\widetilde{\beta}))=\beta.
		\end{equation}
		Here the second equality uses the fact that $\Omega_G(TC,T\mathcal{F})=0$. This finishes the proof.
	\end{proof}
	
	One can use Lemma~\ref{lem:closedext} to show that the deformation problem of $C$ inside $\text{Def}_{\mathcal{F}}(C)$ is unobstructed whenever $C\rightarrow C/\mathcal{F}$ is a fiber bundle admitting a global section. That is to say, a special case of Ruan's unobstructedness result for integral coisotropic submanifolds follows from Lemma~\ref{lem:closedext}. We provide the details in the following example.
	
	\begin{ex}
		Let $C\subset(M,\omega)$ be a compact coisotropic submanifold for which $pr:C\rightarrow C/\mathcal{F}$ is a fiber bundle admitting a global section $\sigma$. If $\beta\in\Omega^{1}(\mathcal{F})$ is a first order deformation of $C$ inside $\text{Def}_{\mathcal{F}}(C)$, then according to Lemma~\ref{lem:extension}, $\beta$ has an extension $\widetilde{\beta}\in\Omega^{1}(C)$ satisfying 
		\[
		d\widetilde{\beta}=pr^{*}\gamma
		\]
		for some $\gamma\in\Omega^{2}(C/\mathcal{F})$. Moreover, since $pr\circ\sigma=\text{Id}$, we have
		\[
		\gamma=\sigma^{*}(pr^{*}\gamma)=\sigma^{*}(d\widetilde{\beta})=d(\sigma^{*}\widetilde{\beta}).
		\]
		This implies that $\widetilde{\beta}-pr^{*}(\sigma^{*}\widetilde{\beta})$ is a closed extension of $\beta$. By Lemma~\ref{lem:closedext}, $\beta$ is unobstructed.
		
		A concrete example of this type is the coisotropic submanifold considered in \cite{marco}. There $C$ is the torus $\mathbb{T}^{4}$ with presymplectic form $\omega_{C}=d\theta_1\wedge d\theta_2$. The characteristic foliation $\mathcal{F}$ is given by the fibers of the trivial $\mathbb{T}^{2}$-bundle
		\[
		\mathbb{T}^{4}\rightarrow\mathbb{T}^{2}:(\theta_1,\theta_2,\theta_3,\theta_4)\mapsto(\theta_1,\theta_2).
		\]
		Choosing $G:=\text{Span}\{\partial_{\theta_1},\partial_{\theta_2}\}$ as a complement, $C$ is coisotropic in the Gotay local model
		\[
		\big(\mathbb{T}^{4}\times\mathbb{R}^{2},d\theta_1\wedge d\theta_2+d\theta_3\wedge d\xi_1+d\theta_4\wedge d\xi_2\big),
		\]
		where $(\xi_1,\xi_2)$ are the fiber coordinates corresponding with the frame $\{d\theta_3,d\theta_4\}$ of $T^{*}\mathcal{F}$.
		It was shown in \cite{marco} that the deformation problem of $C$ --as a coisotropic submanifold-- is obstructed. That is, there exist closed foliated one-forms $\beta\in\Omega^{1}(\mathcal{F})$ that are not tangent to a path of coisotropic deformations. By contrast, the restricted deformation problem of $C$ inside $\text{Def}_{\mathcal{F}}(C)$ is unobstructed. That is, any closed foliated one-form $\beta\in\Omega^{1}(\mathcal{F})$ that gives rise to a class in the kernel of $d_{\nu}:H^{1}(\mathcal{F})\rightarrow H^{1}(\mathcal{F};N^{*}\mathcal{F})$ is tangent to a path of coisotropic deformations, whose characteristic foliation is moreover diffeomorphic with $\mathcal{F}$.
	\end{ex}
	
	We now proceed to the proof of our main theorem. The argument relies on the following lemma, which is based on a result in the forthcoming work \cite{rel}.

	\begin{lemma}\label{ruanProp5}
		Let $C\subset(M,\omega)$ be a compact coisotropic submanifold with Gotay local model $(U,\Omega_G)$. Assume that $\{\eta_t\}_{0\leq t\leq\epsilon}$ is a smooth family of presymplectic structures on $C$ with
		\[
		\begin{cases}
			\eta_0=\omega_C\\
			rk(\eta_t)\ \text{is constant}\ \text{and}\ \ker\eta_t=\ker\omega_C\\
			[\eta_t]\in H^{2}(C)\ \text{is constant}
		\end{cases}.
		\]
		Shrinking $\epsilon$ if necessary, there exist a smooth path $\{\sigma_t\}_{0\leq t\leq\epsilon}\subset\text{Def}_{\mathcal{F}}^{U}(C)$  with $\sigma_0=0$, and a smooth family of diffeomorphisms $\{\psi_t\}_{0\leq t\leq\epsilon}\subset\text{Diff}_{0}(C)$ such that $\psi_{0}=\text{Id}$ and
		\begin{equation}\label{eq:checkpullback}
			\sigma_{t}^{*}\Omega_G=\psi_{t}^{*}\eta_t\hspace{1cm} \text{for all}\ \ 0\leq t\leq\epsilon.
		\end{equation}
	\end{lemma}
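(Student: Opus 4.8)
The plan is to prove the statement by a \emph{Moser argument} in which the diffeomorphisms $\psi_t$ and the sections $\sigma_t$ are constructed simultaneously, by flowing along a time-dependent vector field while letting $\sigma_t$ absorb the ``leafwise'' part of the deformation. Using the hypothesis that $[\eta_t]\in H^{2}(C)$ is constant, I would first fix a smooth family $\mu_t\in\Omega^{1}(C)$ of primitives with $\dt{\eta_t}=d\mu_t$; such a family exists on the compact manifold $C$ (e.g.\ via a fixed homotopy operator for the de Rham complex). I will then seek $\psi_t$ as the flow of a time-dependent vector field $X_t$ with $\psi_0=\text{Id}$, together with a path $\sigma_t\in\Gamma(U)$ with $\sigma_0=0$, such that $\psi_t^{*}\eta_t=\omega_C-d(j(\sigma_t))$; by Prop.~\ref{F} the right-hand side equals $\sigma_t^{*}\Omega_G$, which is \eqref{eq:checkpullback}. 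Note that membership $\sigma_t\in\text{Def}_{\mathcal{F}}^{U}(C)$ then comes for free: since $\ker(\omega_C-d(j(\sigma_t)))=\ker(\psi_t^{*}\eta_t)=(\psi_t^{-1})_{*}(T\mathcal{F})$, the map $\psi_t^{-1}$ is itself the required foliated diffeomorphism, and taking $\Phi_t:=\sigma_t\circ\psi_t^{-1}$ furnishes the smooth family of embeddings of Def.~\ref{def:smoothpath} with $p\circ\Phi_t=\psi_t^{-1}$.

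Differentiating the target equation and using $d\eta_t=0$, Cartan's formula gives $\tfrac{d}{dt}\big(\psi_t^{*}\eta_t-\omega_C+d(j(\sigma_t))\big)=d\big[\psi_t^{*}(\iota_{X_t}\eta_t+\mu_t)+j(\dt{\sigma_t})\big]$, so it suffices to make the bracketed one-form vanish; pushing it forward by $\psi_t$, this is the pointwise equation
\[
\iota_{X_t}\eta_t=-\mu_t-\psi_{t*}\big(j(\dt{\sigma_t})\big).
\]
Here I use the splitting $TC=T\mathcal{F}\oplus G$ and its dual $T^{*}C=G^{0}\oplus(T\mathcal{F})^{0}$, under which $j$ identifies $\Omega^{1}(\mathcal{F})$ with $\Gamma(G^{0})$, while the constant-rank hypothesis $\ker\eta_t=T\mathcal{F}$ makes $\eta_t^{\flat}$ restrict to a bundle isomorphism $G\xrightarrow{\sim}(T\mathcal{F})^{0}$. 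Since $\iota_{X_t}\eta_t$ takes values in $(T\mathcal{F})^{0}$ whenever $X_t\in\Gamma(G)$, I would project the equation onto the two summands: the $(T\mathcal{F})^{0}$-component determines $X_t$ uniquely by inverting $\eta_t^{\flat}|_{G}$, while the $G^{0}$-component becomes
\[
\text{pr}_{G^{0}}\big(\psi_{t*}(j(\dt{\sigma_t}))\big)=-\text{pr}_{G^{0}}(\mu_t),
\]
which I read as an equation determining $\dt{\sigma_t}$.

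The main obstacle is exactly this last equation: the operator $\dt{\sigma}\mapsto\text{pr}_{G^{0}}\big(\psi_{t*}(j(\dt{\sigma}))\big)$ is a $t$-dependent deformation of the isomorphism $j$ (at $t=0$ it equals $j$, since $\psi_0=\text{Id}$ and $j(\dt{\sigma})\in\Gamma(G^{0})$ already), so by compactness of $C$ it remains invertible for $t$ in a possibly smaller interval $[0,\epsilon]$; this is where one shrinks $\epsilon$. Granting invertibility, the prescriptions for $\dt{\sigma_t}$ and $X_t$ depend smoothly on the current data $(\psi_t,\sigma_t)$, so the pair $(\psi_t,\sigma_t)$ solves a well-posed coupled ODE on the compact manifold $C$, and standard existence theory yields a solution on some $[0,\epsilon]$ with $\psi_0=\text{Id}$ and $\sigma_0=0$. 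Shrinking $\epsilon$ once more keeps $\sigma_t$ $\mathcal{C}^{1}$-small, hence a genuine section of $U$, and ensures $\psi_t\in\text{Diff}_{0}(C)$. By construction the bracketed one-form vanishes, so $\psi_t^{*}\eta_t-\omega_C+d(j(\sigma_t))$ is constant in $t$; since it is $0$ at $t=0$, this yields \eqref{eq:checkpullback} and completes the proof.
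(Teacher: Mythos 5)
Your reduction is algebraically sound: differentiating the target identity $\psi_t^{*}\eta_t=\omega_C-d(j(\sigma_t))$ does yield the equation $\iota_{X_t}\eta_t=-\mu_t-\psi_{t*}\big(j(\dot{\sigma}_t)\big)$, and your observation that once \eqref{eq:checkpullback} holds, $\psi_t^{-1}$ itself serves as the foliated diffeomorphism required by Def.~\ref{def:smoothpath} (via $\Phi_t=\sigma_t\circ\psi_t^{-1}$) is correct. The gap is the final existence claim. After splitting along $T^{*}C=G^{0}\oplus(T\mathcal{F})^{0}$, the vector field $X_t$ can only be written down once $\dot{\sigma}_t$ has been obtained by inverting $A_{\psi_t}\colon\dot{\sigma}\mapsto\text{pr}_{G^{0}}\big(\psi_{t*}(j(\dot{\sigma}))\big)$; consequently $X_t$ depends on the unknown $\psi_t$ itself --- nonlocally (through $\psi_t^{-1}$) and on its first derivatives (through $\psi_{t*}$, i.e.\ $d\psi_t$). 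Hence $\dot{\psi}_t(x)=X_t\big(\psi_t(x)\big)$ is \emph{not} a time-dependent ODE on the compact manifold $C$: written out it has the form $\dot{\psi}_t(x)=F\big(t,x,\psi_t(x),d(\psi_t)_x\big)$ with $F$ additionally involving $\psi_t^{-1}$, i.e.\ it is a nonlocal, fully nonlinear first-order evolution equation --- equivalently an ``ODE'' on the Fr\'echet space $\text{Diff}(C)\times\Gamma(T^{*}\mathcal{F})$ whose right-hand side loses a derivative. Picard--Lindel\"of does not apply in this setting, and there is no general well-posedness theorem covering such systems; this loss of derivatives is precisely the difficulty the classical Moser trick is engineered to avoid, by producing a vector field from pointwise linear algebra that never refers to its own flow. (There is also a secondary circularity: the invertibility of $A_{\psi_t}$, needed to define the system, must hold along the solution one is trying to construct; in an honest ODE this is handled by a continuation argument, but here it compounds the problem.) So ``standard existence theory'' cannot be invoked, and the proof as written does not go through --- even though solutions of your coupled system do exist, this is a consequence of the lemma rather than an independent route to it.

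The paper's proof sidesteps exactly this by running the Moser argument \emph{upstairs} in the Gotay model rather than downstairs on $C$: the forms $\omega_t:=p^{*}\eta_t+j^{*}\omega_{can}$ are symplectic near $C$ for small $t$ (here is where $\ker\eta_t=\ker\omega_C$ enters), they have constant cohomology class on $U$, so the Moser equation $\iota_{X_t}\omega_t=-\beta_t$ is solved pointwise on $U$ with no reference to the flow; one integrates to get an isotopy $\phi_t$ with $\phi_t^{*}\omega_t=\Omega_G$, and only afterwards extracts $\sigma_t$ from $\phi_t^{-1}(C)=\text{Graph}(\sigma_t)$ and sets $\psi_t:=f_t^{-1}$ with $f_t=p\circ\phi_t^{-1}\circ\sigma_0$. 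The degeneracy of $\eta_t$, which forced you to introduce the coupled unknown $\dot{\sigma}_t$, is absorbed upstairs by the fiber directions of $T^{*}\mathcal{F}$. To repair your argument you would either need genuinely hard analysis (a Nash--Moser scheme with tame estimates for your coupled system), or lift the problem to $U$ --- at which point you have reproduced the paper's proof.
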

	
	The lemma gives sufficient conditions for presymplectic structures close to $\omega_C$ to be obtained from coisotropic sections of the local model $(U,\Omega_G)$, up to isotopy. In other words, it gives presymplectic structures that lie in the image of the map \eqref{Fdefs}, when we quotient the codomain by the equivalence relation given by isotopies.
	
	\begin{proof}[Proof of Lemma \ref{ruanProp5}]
		For each value of $t\in[0,\epsilon]$, we can use $G$ as a complement to $\ker\eta_t$. Constructing the associated Gotay local models, we get a family of symplectic structures
		\[
		\omega_t=p^{*}\eta_t+j^{*}\omega_{can},
		\]
		defined on a neighborhood $U_t$ of $C\subset T^{*}\mathcal{F}$. The tube lemma implies that the intersection $\cap_{t\in [0,\epsilon]}U_t$ is an open neighborhood of $C$; hence shrinking $U$ if necessary, we can assume that all $\omega_t$ are symplectic on $U$ for $t\in[0,\epsilon]$.
		
		Since the inclusion $i:C\hookrightarrow U$ induces an isomorphism in cohomology and the class $[\eta_t]\in H^{2}(C)$ is constant, the same holds for the class $[\omega_t]\in H^{2}(U)$:
		\[
		0=\frac{d}{dt}[\eta_t]=\frac{d}{dt}[i^{*}\omega_t]=[i^{*}]\left[\frac{d}{dt}\omega_t\right]\Rightarrow \left[\frac{d}{dt}\omega_t\right]=0.
		\]
		Choosing primitives $\frac{d}{dt}\omega_t=d\beta_t$, we now apply Moser's argument. We make the ansatz
		\begin{equation}\label{moser}
			0=\frac{d}{dt}\phi_{t}^{*}\omega_t=\phi_{t}^{*}\left(\pounds_{X_t}\omega_t+\frac{d}{dt}\omega_t\right)=\phi_{t}^{*}\left(d\iota_{X_t}\omega_t+d\beta_t\right),
		\end{equation}
		where $(\phi_t)$ is an isotopy with corresponding time-dependent vector field $(X_t)$. We can solve the equation $\iota_{X_t}\omega_t=-\beta_t$ for $X_t\in\mathfrak{X}(U)$. Since $C$ is compact, we can shrink $\epsilon>0$ such that $\phi_{t}^{-1}(C)$ is the graph of a section $\sigma_t\in\Gamma(U)$ for each $0\leq t\leq\epsilon$. For such $t$, the map 
		\[
		f_t=p\circ\phi_{t}^{-1}\circ\sigma_0:C\rightarrow C
		\]
		is a diffeomorphism and we have 
		\[
		\sigma_t=\phi_{t}^{-1}\circ\sigma_0\circ f_{t}^{-1}.
		\]
		
		We claim that the pair $(\sigma_t,\psi_t:=f_t^{-1})$ meets the requirements of the lemma. To show that $\sigma_t$ is a smooth path in $\text{Def}_{\mathcal{F}}^{U}(C)$ (see Def.~\ref{def:smoothpath}), we first check that $\text{Graph}(\sigma_t)\subset(U,\Omega_G)$ is coisotropic. By the first equality in \eqref{moser}, we have
		\[
		\phi_{t}^{*}\omega_t=\omega_0=\Omega_G,
		\]
		and this implies that
		\begin{equation}\label{eq:pullback}
			\sigma_{t}^{*}\Omega_G=(f_{t}^{-1})^{*}(\sigma_{0}^{*}((\phi_{t}^{-1})^{*}\Omega_G))=(f_{t}^{-1})^{*}(\sigma_{0}^{*}\omega_t)=(f_{t}^{-1})^{*}\eta_t.
		\end{equation}
		This shows that the rank of $\sigma_{t}^{*}\Omega_G$ is equal to the rank of $\eta_t$, which by assumption equals the rank of $\omega_{C}$. Hence $\text{Graph}(\sigma_t)\subset(U,\Omega_G)$ is coisotropic, by Prop.~\ref{F}. We also showed at the same time that the equality \eqref{eq:checkpullback} is satisfied. It only remains to check that the map
		\[
		f_t=p\circ\phi_{t}^{-1}\circ\sigma_0:(C,\ker\omega_{C})\rightarrow\big(C,\ker(\omega_{C}-d(j(\sigma_t)))\big)
		\]
		is a foliated diffeomorphism. With Prop.~\ref{F} in mind, the equality \eqref{eq:pullback} shows that
		\[
		f_t^{*}\big(\omega_{C}-d(j(\sigma_t))\big)=\eta_t,
		\]
		hence $f_t$ takes the foliation $\ker\eta_t=\ker\omega_C$ to the foliation $\ker(\omega_{C}-d(j(\sigma_t)))$.
	\end{proof}
	
	At last, we can prove our main result.
	
	\begin{thm}\label{thm:main}
		If $C\subset(M,\omega)$ is a compact coisotropic submanifold, then the deformation problem of $C$ inside $\text{Def}_{\mathcal{F}}(C)$ is unobstructed.
	\end{thm}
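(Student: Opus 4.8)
The plan is to realize $\beta$ by an explicit path of presymplectic forms on $C$, transport that path into $\text{Def}^{U}_{\mathcal{F}}(C)$ through the Moser-type Lemma~\ref{ruanProp5}, and then adjust the resulting path by a symplectic isotopy so that its velocity is exactly $\beta$.

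First I would unpack the hypothesis geometrically. A first order deformation $\beta$ satisfies $d_{\mathcal{F}}\beta=0$ and $d_{\nu}[\beta]=0$ (Def.~\ref{def:firstorderdefs}), so Lemma~\ref{lem:extension} supplies an extension $\widetilde{\beta}\in\Omega^{1}(C)$ with $\iota_{V}d\widetilde{\beta}=0$ for all $V\in\Gamma(T\mathcal{F})$, i.e.\ $d\widetilde{\beta}\in\Omega^{2}_{bas}(C)$. I then set $\eta_{t}:=\omega_{C}-t\,d\widetilde{\beta}$ and check that this family meets the hypotheses of Lemma~\ref{ruanProp5} for small $t$: one has $\eta_{0}=\omega_{C}$, the class $[\eta_{t}]=[\omega_{C}]$ is constant since $d\widetilde{\beta}$ is exact, and $\iota_{V}\eta_{t}=0$ for $V\in\Gamma(T\mathcal{F})$ forces $T\mathcal{F}\subseteq\ker\eta_{t}$. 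With respect to the complement $G$, the form $\omega_{C}$ is nondegenerate on $G$; as nondegeneracy is open and $C$ is compact, $\eta_{t}|_{G}$ stays nondegenerate on $[0,\epsilon]$, whence $\ker\eta_{t}=T\mathcal{F}=\ker\omega_{C}$ and $\eta_{t}$ has constant rank. This is the conceptual core of the argument: the assumption $[\beta]\in\ker(d_{\nu})$ is exactly what makes the natural presymplectic path keep its kernel and rank fixed.

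Applying Lemma~\ref{ruanProp5} to $\{\eta_{t}\}$ then gives a smooth path $\{\sigma_{t}\}\subset\text{Def}^{U}_{\mathcal{F}}(C)$ with $\sigma_{0}=0$, together with $\psi_{t}\in\text{Diff}_{0}(C)$, $\psi_{0}=\mathrm{Id}$, such that $\sigma_{t}^{*}\Omega_{G}=\psi_{t}^{*}\eta_{t}$. By Prop.~\ref{F} this reads $\omega_{C}-d(j(\sigma_{t}))=\psi_{t}^{*}(\omega_{C}-t\,d\widetilde{\beta})$. Writing $Y_{0}:=\dt{\psi}_{0}$ and differentiating at $t=0$, the identity $\pounds_{Y_{0}}\omega_{C}=d(\iota_{Y_{0}}\omega_{C})$ shows that $\nu:=j(\dt{\sigma}_{0})+\iota_{Y_{0}}\omega_{C}-\widetilde{\beta}$ is $d$-closed on $C$. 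Restricting to $\mathcal{F}$ and using $r(\iota_{Y_{0}}\omega_{C})=0$ (since $Y_{0}$ is contracted into $\ker\omega_{C}$), I obtain $\dt{\sigma}_{0}=\beta+r(\nu)$.

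The remaining task---cancelling the discrepancy $r(\nu)$---is the step I expect to be the main obstacle, because Lemma~\ref{ruanProp5} controls $\sigma_{t}$ only through $\sigma_{t}^{*}\Omega_{G}$ and therefore determines $\dt{\sigma}_{0}$ only up to a closed foliated one-form; recovering $\beta$ itself requires an extra correction. Since $\nu$ is a closed extension of $r(\nu)$, I would argue exactly as in Lemma~\ref{lem:closedext}: the closed form $p^{*}\nu$ defines a symplectic vector field $X:=(\Omega_{G}^{\flat})^{-1}(-p^{*}\nu)$ on $(U,\Omega_{G})$, whose flow $\chi_{s}$ exists up to time $\epsilon$ by compactness. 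Setting $\text{Graph}(\alpha_{t}):=\chi_{-t}\big(\text{Graph}(\sigma_{t})\big)$ produces, for small $t$, graphs of sections $\alpha_{t}$; as $\chi_{-t}$ is a symplectomorphism, each is coisotropic and its characteristic foliation is the $\chi_{-t}$-image of that of $\text{Graph}(\sigma_{t})$, so composing the foliated diffeomorphism witnessing $\sigma_{t}\in\text{Def}^{U}_{\mathcal{F}}(C)$ with $\chi_{-t}$ shows $\alpha_{t}\in\text{Def}^{U}_{\mathcal{F}}(C)$. Finally, since the induced section velocity depends linearly on the velocity of the defining embedding, the computation in the proof of Lemma~\ref{lem:closedext} shows that the $\chi_{-t}$-factor contributes $-r\big(i^{*}\Omega_{G}^{\flat}(-X)\big)=-r(\nu)$, so that $\dt{\alpha}_{0}=(\beta+r(\nu))-r(\nu)=\beta$. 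As $\beta$ was an arbitrary first order deformation, this proves that the deformation problem of $C$ inside $\text{Def}_{\mathcal{F}}(C)$ is unobstructed.
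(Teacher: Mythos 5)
Your proposal is correct and takes essentially the same route as the paper's proof: extend $\beta$ via Lemma~\ref{lem:extension}, feed the path $\omega_C-t\,d\widetilde{\beta}$ into the Moser-type Lemma~\ref{ruanProp5}, differentiate the resulting identity to produce a closed one-form (your $\nu$ is the paper's $\theta$), and cancel the discrepancy by the symplectic isotopy it generates, exactly as in Lemma~\ref{lem:closedext}. The only differences are cosmetic sign and flow-direction conventions (your $X$ is the negative of the paper's $Z$, and your $\chi_{-t}$ is the paper's $\phi_t$).
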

	\begin{proof}
		Let $\beta$ be a first order deformation of $C$, as defined in Def.~\ref{def:firstorderdefs}. We will construct a smooth path $\alpha_t$ in $\text{Def}_{\mathcal{F}}^{U}(C)$ such that $\alpha_0=0$ and $\dt{\alpha_0}=\beta$.
		
		By Lemma~\ref{lem:extension}, $\beta$ has an extension $\tilde{\beta}\in\Omega^{1}(C)$ satisfying $\iota_{V}d\tilde{\beta}=0$ for all $V\in\Gamma(T\mathcal{F})$. Consider the family of two-forms $\{\omega_C-td\tilde{\beta}\}_{t\geq 0}$ on $C$. By compactness, there exists $\epsilon>0$ such that $rk(\omega_C-td\tilde{\beta})_{p}\geq rk(\omega_C)_{p}$ for all $0\leq t\leq\epsilon$ and $p\in C$. But we also know that $\ker\omega_C\subset\ker(\omega_C-td\tilde{\beta})$, so $rk(\omega_C-td\tilde{\beta})_{p}\leq rk(\omega_C)_{p}$ for all $p\in C$ and $t\geq 0$. In conclusion, for all $0\leq t\leq\epsilon$, the two-form $\omega_C-td\tilde{\beta}$ is presymplectic with the same kernel as $\omega_C$.
		
		We now apply Lemma \ref{ruanProp5}. Shrinking $\epsilon$ if necessary, there exist a smooth path $\{\sigma_t\}_{0\leq t\leq\epsilon}$ in $\text{Def}_{\mathcal{F}}^{U}(C)$ with $\sigma_0=0$ and a smooth family of diffeomorphisms $\{\psi_t\}_{0\leq t\leq\epsilon}\subset\text{Diff}_{0}(C)$ such that $\psi_0=\text{Id}$ and
		\begin{equation}\label{pullback}
			\psi_{t}^{*}\left(\omega_C-td\tilde{\beta}\right)=\omega_C-d(j(\sigma_t).
		\end{equation}
		Let $(X_t)$ be the time-dependent vector field of $(\psi_t)$. Differentiating the equality \eqref{pullback} gives
		\[
		\psi_{t}^{*}\left(\pounds_{X_t}(\omega_C-td\tilde{\beta})-d\tilde{\beta}\right)=-d\left(j\left(\dt{\sigma_t}\right)\right).
		\]
		Evaluating at $t=0$, we get
		\[
		\pounds_{X_0}\omega_C-d\tilde{\beta}=-d\left(j\left(\dt{\sigma_0}\right)\right)
		\]
		and therefore
		\[
		d\left(\iota_{X_0}\omega_C-\tilde{\beta}+j\left(\dt{\sigma_0}\right)\right)=0.
		\]
		Let us denote the closed one-form between brackets by $\theta\in\Omega^{1}(C)$. It gives rise to a symplectic vector field $Z:=(\Omega_{G}^{\flat})^{-1}(p^{*}\theta)$ on $(U,\Omega_G)$, with flow $(\phi_t)$. Shrinking $\epsilon$ if necessary, we can make sure that the submanifolds $\phi_t(\text{Graph}(\sigma_t))$ are graphs of sections $\alpha_t\in\Gamma(U)$.
		
		We claim that $\alpha_t$ is a smooth path in $\text{Def}_{\mathcal{F}}^{U}(C)$. It is clear that $\text{Graph}(\alpha_t)\subset(U,\Omega_G)$ is coisotropic, since $\text{Graph}(\sigma_t)$ is coisotropic and $\phi_t$ is a symplectomorphism. Because $\sigma_t$ is a smooth path in $\text{Def}_{\mathcal{F}}^{U}(C)$, there exists a smooth path of embeddings $\Phi_t:C\hookrightarrow U$ such that $\text{Graph}(\sigma_t)=\Phi_t(C)$ and 
		\begin{equation}\label{eq:foliateddiffeo}
			p\circ\Phi_t:(C,\ker\omega_{C})\rightarrow\big(C,\ker(\omega_{C}-d(j(\sigma_t)))\big)
		\end{equation}
		is a foliated diffeomorphism. To conclude that $\alpha_t$ is a smooth path in $\text{Def}_{\mathcal{F}}^{U}(C)$, it suffices to check that the embeddings $\phi_t\circ\Phi_t:C\hookrightarrow U$ are such that
		\begin{equation}\label{eq:showdiffeo}
			p\circ\phi_t\circ\Phi_t:(C,\ker\omega_{C})\rightarrow\big(C,\ker(\omega_{C}-d(j(\alpha_t)))\big)
		\end{equation}
		is a foliated diffeomorphism. To do so, we argue as in the proof of Lemma \ref{lem:closedext}. Let us first define diffeomorphisms
		\begin{equation}\label{eq:f_t}
			f_t:=p\circ\phi_t\circ\sigma_t,
		\end{equation}
		so that 
		\begin{equation}\label{eq:alpha_t}
			\alpha_t=\phi_t\circ\sigma_t\circ f_{t}^{-1}.
		\end{equation}
		Because $\phi_{t}^{*}\Omega_G=\Omega_G$, we have
		\[
		\sigma_{t}^{*}\Omega_G=\sigma_{t}^{*}(\phi_{t}^{*}\Omega_G)=(\phi_t\circ\sigma_t)^{*}\Omega_G=(\alpha_t\circ f_t)^{*}\Omega_G=f_{t}^{*}(\alpha_t^{*}\Omega_G),
		\]
		hence by Prop.~\ref{F} this shows that
		\[
		f_t:\big(C,\ker(\omega_{C}-d(j(\sigma_t)))\big)\rightarrow\big(C,\ker(\omega_{C}-d(j(\alpha_t)))\big)
		\]
		is a foliated diffeomorphism. Composing with \eqref{eq:foliateddiffeo}, we get a foliated diffeomorphism
		\[
		f_t\circ p\circ\Phi_t:(C,\ker\omega_C)\rightarrow\big(C,\ker(\omega_{C}-d(j(\alpha_t)))\big).
		\]
		Since by \eqref{eq:f_t}, we have
		\[
		f_t\circ p\circ\Phi_t=(p\circ\phi_t\circ\sigma_t)\circ p\circ\Phi_t=p\circ\phi_t\circ\Phi_t,
		\]
		this confirms that \eqref{eq:showdiffeo} is a foliated diffeomorphism. Hence, $\alpha_t$ is a smooth path in $\text{Def}_{\mathcal{F}}^{U}(C)$.
		
		At last, we check that $\beta$ is tangent to the path $\alpha_t$. We denote by $P$ the vertical projection induced by the splitting $T(T^{*}\mathcal{F})|_{C}=TC\oplus T^{*}\mathcal{F}$. Using the expression \eqref{eq:alpha_t} and the chain rule, we get for $q\in C$ that
		\begin{align}\label{velocity}
			\dt{\alpha_0}(q)&=P\left(\dt{\alpha_0}(q)\right)\nonumber\\
			&=P\left(\left.\frac{d}{dt}\right|_{t=0}(\phi_t\circ\sigma_t\circ f_{t}^{-1})(q)\right)\nonumber\\
			&=P\left(\left.\frac{d}{dt}\right|_{t=0}\phi_t(q)+\left.\frac{d}{dt}\right|_{t=0}\sigma_t(q)+\left.\frac{d}{dt}\right|_{t=0}f_{t}^{-1}(q)\right)\nonumber\\
			&=P\left(Z(q)\right)+\dt{\sigma_0}(q).
		\end{align}
		Here we used that the last summand in the third line above is tangent to $C$. By Lemma \ref{dF} in the Appendix, we have that $P(Z)\in\Gamma(T^{*}\mathcal{F})$ is the restriction to $T\mathcal{F}$ of the one-form $-\theta\in\Omega^{1}(C)$, see the computation \eqref{eq:compdot}. That is,
		\[
		P(Z)=\beta-\dt{\sigma_0},
		\]
		and inserting this equality into \eqref{velocity}, we obtain that $\dt{\alpha_0}(q)=\beta(q)$, as desired.
	\end{proof}

\begin{remark}
The coisotropic sections $\sigma_t$ obtained in Lemma~\ref{ruanProp5} are not uniquely determined since they depend on a choice of primitive, as the proof of Lemma \ref{ruanProp5} shows. Nevertheless, the proof of Thm.~\ref{thm:main} shows that any path $\sigma_t$ thus obtained can be used to prolong a first order deformation $\beta$, up to modifying the $\sigma_t$ suitably by a symplectic isotopy.

Alternatively, instead of using the conclusion of Lemma~\ref{ruanProp5}, one can prove Thm.~\ref{thm:main} by running the proof of Lemma~\ref{ruanProp5} explicitly using a well chosen primitive. Then the coisotropic sections $\sigma_t$ obtained this way do not need to be corrected anymore by a symplectic isotopy. This approach yields a simple algorithm which produces out of a first order deformation $\beta$ a path $\{\sigma_t\}$ in $\text{Def}_{\mathcal{F}}^{U}(C)$ with $\sigma_0=0$ and $\dt{\sigma_0}=\beta$. It consists of the following steps:
\begin{enumerate}
\item Fix an extension $\widetilde{\beta}\in\Omega^{1}(C)$ of $\beta$ as in Lemma~\ref{lem:extension}.
\item For small enough $t$, the two-forms
\[
\omega_t:=p^{*}(\omega_C-td\tilde{\beta})+j^{*}\omega_{can}=\Omega_G-tp^{*}d\tilde{\beta}
\]
are symplectic on a neighborhood of $C\subset T^{*}\mathcal{F}$. There, one can solve the equation
\begin{equation}\label{eq:tosolve}
\iota_{X_t}\omega_t=p^{*}\tilde{\beta}.
\end{equation}
\item Let $(\phi_t)$ be the flow of $X_t$. For small enough $t$, we have $\phi_t^{-1}(C)=\text{Graph}(\sigma_t)$ for a smooth path $\sigma_t$ in $\text{Def}_{\mathcal{F}}^{U}(C)$. Now $\beta=\dt{\sigma_0}$.
\end{enumerate}
We have to justify that $\beta=\dt{\sigma_0}$. Since the time-dependent vector field of the isotopy $(\phi_t^{-1})$ is $Z_t:=-(\phi_t^{-1})_{*}X_t$, it follows from \cite[Lemma~3.13]{equivalences} that $\dt{\sigma_0}$ is the foliated one-form defined by $P(Z_0)=-P(X_0)$, where $P$ is the vertical projection along the zero section $C\subset T^{*}\mathcal{F}$. By Lemma~\ref{dF}, the latter is given by
\begin{equation}\label{eq:res}
r(i^{*}\Omega_G^{\flat}(P(X_0)))=r(i^{*}\Omega_G^{\flat}(X_0)).
\end{equation}
The equality \eqref{eq:tosolve} says that 
\[
\Omega_G^{\flat}(X_t)=t\iota_{X_t}p^{*}d\tilde{\beta}+p^{*}\tilde{\beta},
\]
hence $\Omega_G^{\flat}(X_0)=p^{*}\tilde{\beta}$. Inserting this in the expression \eqref{eq:res}, we obtain
\[
\dt{\sigma_0}=r(i^{*}p^{*}\tilde{\beta})=r(\tilde{\beta})=\beta.
\]
\end{remark}

	\section{Implications for the coisotropic deformation problem}\label{sec:four}
	We discuss what Thm.~\ref{thm:main} tells us about the classical coisotropic deformation problem, which allows \emph{all} coisotropic deformations of $C$ instead of just those whose characteristic foliation is diffeomorphic with $\mathcal{F}$. 
	First, Thm.~\ref{thm:main} gives a partial unobstructedness result in this context, for which we provide some examples. Second, Thm.~\ref{thm:main} implies that the Kuranishi map of the $L_{\infty}$-algebra governing coisotropic deformations of $C$ should vanish on elements of $\ker(d_{\nu})$. We confirm that this is the case, by establishing a canonical description for the Kuranishi map in terms of the transverse differentiation map $d_{\nu}$.

	\subsection{A partial unobstructedness result}
	\vspace{0.1cm}
	\noindent
	
	Let $C\subset(M,\omega)$ be a compact coisotropic submanifold with characteristic foliation $\mathcal{F}$. As recalled in \S\ref{subsub:coiso}, first order deformations of $C$ --as a coisotropic submanifold-- are closed foliated one-forms on $\mathcal{F}$, and these are generally obstructed. It is however clear geometrically that a first order deformation is unobstructed if it admits a closed extension, see \cite[Rem.~4.6]{equivalences} or the proof of Lemma~\ref{lem:closedext}. Let us state this result for future reference.
	
	\begin{lemma}\label{lem:closed-extension}
		Let $C\subset(M,\omega)$ be a compact coisotropic submanifold. Any first order deformation $\beta\in\Omega^{1}(\mathcal{F})$ of $C$ admitting a closed extension $\widetilde{\beta}\in\Omega^{1}(C)$ is unobstructed.
	\end{lemma}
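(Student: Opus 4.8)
The plan is to observe that this statement is essentially the content already extracted in the proof of Lemma~\ref{lem:closedext}, now read in the setting of the \emph{unrestricted} coisotropic deformation problem. Since we no longer need to track the characteristic foliation, the argument simplifies: we only have to produce a smooth path of coisotropic sections $\alpha_t$ of a Gotay local model $(U,\Omega_G)$ with $\alpha_0=0$ and $\dt{\alpha_0}=\beta$, dropping the verification that $p\circ\phi_t$ is a foliated diffeomorphism that was needed there.

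First I would fix a Gotay local model $(U,\Omega_G)$ for some choice of complement $G$, with projection $p:U\to C$. Given the closed extension $\widetilde{\beta}\in\Omega^{1}(C)$, the pullback $p^{*}\widetilde{\beta}$ is a closed one-form on $U$, so
\[
X:=(\Omega_{G}^{\flat})^{-1}(-p^{*}\widetilde{\beta})
\]
is a symplectic vector field. By compactness of $C$ its flow $(\phi_t)$ exists for $t$ in a small interval $[0,\epsilon)$ along $C$, and after shrinking $\epsilon$ the images $\phi_t(C)$ are graphs of sections $\alpha_t\in\Gamma(U)$ with $\alpha_0=0$. Since $C$ is coisotropic and each $\phi_t$ is a symplectomorphism of $(U,\Omega_G)$, each $\text{Graph}(\alpha_t)=\phi_t(C)$ is coisotropic; hence $\alpha_t$ is a genuine path of coisotropic sections, which is all that unobstructedness in the sense of \S\ref{subsub:coiso} requires.

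Finally, the tangency $\dt{\alpha_0}=\beta$ is the same computation as in the proof of Lemma~\ref{lem:closedext}. By \cite[Lemma~3.13]{equivalences}, $\dt{\alpha_0}$ is the foliated one-form associated to the vertical fiberwise-constant vector field $P(X)$, where $P$ is the vertical projection in the splitting $TU|_{C}=TC\oplus T^{*}\mathcal{F}$; then Lemma~\ref{dF} together with the identity $\Omega_G(TC,T\mathcal{F})=0$ gives
\[
\dt{\alpha_0}=-r\big(i^{*}\Omega_{G}^{\flat}(P(X))\big)=-r\big(i^{*}\Omega_{G}^{\flat}(X)\big)=r\big(i^{*}p^{*}\widetilde{\beta}\big)=\beta,
\]
exactly as in \eqref{eq:compdot}. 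The statement is therefore a direct corollary of the construction used for Lemma~\ref{lem:closedext}, and there is no genuine obstacle: the only point that needs a little care is the bookkeeping of signs and the passage from $P(X)$ to $X$, which relies on $\Omega_G(TC,T\mathcal{F})=0$. For this reason I expect the proof to amount to little more than a pointer to the relevant part of the proof of Lemma~\ref{lem:closedext}.
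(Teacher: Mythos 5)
Your proposal is correct and takes exactly the paper's route: the paper gives no standalone proof of Lemma~\ref{lem:closed-extension}, but justifies it precisely by pointing to the proof of Lemma~\ref{lem:closedext}, whose flow-of-a-symplectic-vector-field construction and tangency computation \eqref{eq:compdot} you reproduce, correctly noting that the foliated-diffeomorphism verification can be dropped in the unrestricted setting.
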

	
	By Thm.~\ref{thm:main} and Def.~\ref{def:firstorderdefs}, we obtain the following partial unobstructedness result, which extends Lemma~\ref{lem:closed-extension}.
	
	\begin{cor}\label{cor:partialunobs}
		Let $C\subset(M,\omega)$ be a compact coisotropic submanifold. Any first order deformation $\beta\in\Omega^{1}(\mathcal{F})$ of $C$ whose cohomology class $[\beta]\in H^{1}(\mathcal{F})$ lies in the kernel of the transverse differentiation map $d_{\nu}:H^{1}(\mathcal{F})\rightarrow H^{1}(\mathcal{F};N^{*}\mathcal{F})$ is unobstructed.
	\end{cor}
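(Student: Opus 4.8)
The plan is to deduce the corollary immediately from Thm.~\ref{thm:main}, using the observation that the restricted notion of unobstructedness---the one relevant to deformations inside $\text{Def}_{\mathcal{F}}(C)$---is strictly stronger than the classical notion from \S\ref{subsub:coiso}. So the entire content is already packaged inside the Main Theorem, and what remains is a short bookkeeping argument relating the two definitions of ``unobstructed''.

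First I would observe that the hypothesis is exactly the condition appearing in Def.~\ref{def:firstorderdefs}: a closed foliated one-form $\beta\in\Omega^{1}(\mathcal{F})$ whose class $[\beta]\in H^{1}(\mathcal{F})$ lies in $\ker(d_{\nu})$ is precisely a first order deformation of $C$ inside $\text{Def}_{\mathcal{F}}(C)$. Applying Thm.~\ref{thm:main}, such a $\beta$ is therefore unobstructed in the restricted sense, meaning that there is a smooth path $\{\alpha_t\}$ in $\text{Def}_{\mathcal{F}}^{U}(C)$ with $\alpha_0=0$ and $\dt{\alpha_0}=\beta$.

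The key point is then that a smooth path in $\text{Def}_{\mathcal{F}}^{U}(C)$ is, a fortiori, a smooth path of coisotropic sections of the Gotay local model $(U,\Omega_G)$. Indeed, by Def.~\ref{definition} every element of $\text{Def}_{\mathcal{F}}^{U}(C)$ has coisotropic graph, and by Def.~\ref{def:smoothpath} the accompanying embeddings $\Phi_t\colon C\hookrightarrow U$ exhibit $\alpha_t=\Phi_t\circ(p\circ\Phi_t)^{-1}$ as a smooth one-parameter family of sections in the ordinary sense. Thus the very same path $\{\alpha_t\}$ witnesses that $\beta$ is unobstructed in the classical sense of \S\ref{subsub:coiso}, namely as a closed foliated one-form tangent to a path of coisotropic sections of $U$ starting at the zero section.

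I do not expect any genuine obstacle here; the corollary is a direct corollary of Thm.~\ref{thm:main}. The only routine step to verify is that the extra structure required by Def.~\ref{def:smoothpath} (the embeddings $\Phi_t$ and the foliated-diffeomorphism condition) only strengthens what is asked of an ordinary smooth path of coisotropic sections, so that forgetting this structure lands us back in the classical deformation problem. This also makes transparent why Cor.~\ref{cor:partialunobs} extends Lemma~\ref{lem:closed-extension}: by Lemma~\ref{lem:extension} a closed extension forces $[\beta]\in\ker(d_{\nu})$, so the class of $\beta$ admitting closed extensions sits inside $\ker(d_{\nu})$, which by Ex.~\ref{ex:inf} may be much larger.
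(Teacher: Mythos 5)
Your proposal is correct and matches the paper exactly: the paper derives Cor.~\ref{cor:partialunobs} immediately from Thm.~\ref{thm:main} together with Def.~\ref{def:firstorderdefs}, with no separate proof given, since a smooth path in $\text{Def}_{\mathcal{F}}^{U}(C)$ is in particular a smooth path of coisotropic sections of $(U,\Omega_G)$. Your bookkeeping step relating the restricted and classical notions of unobstructedness is precisely the (implicit) content of the paper's one-line deduction.
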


	The following example illustrates how we can use Cor.~\ref{cor:partialunobs} to find unobstructed first order deformations which cannot be detected with Lemma~\ref{lem:closed-extension}.

	\begin{ex}\label{ex:contactextensions}
		Let $C^{2n-q}\subset(M^{2n},\omega)$ be a compact coisotropic submanifold of $q$-contact type, as introduced in Ex. \ref{ex:contact}. So there exist $\alpha_1,\ldots,\alpha_q\in\Omega^{1}(C)$ such that:
		\begin{enumerate}
			\item $d\alpha_i=\omega_C$ for $i=1,\ldots,q$.
			\item $\alpha_1\wedge\ldots\wedge\alpha_q\wedge\omega_C^{n-q}$ is nowhere zero on $C$.
		\end{enumerate}
		Assume moreover that $q<n$, to exclude the possibility that $C$ is Lagrangian. As before, we denote $r:\Omega^{1}(C)\rightarrow\Omega^{1}(\mathcal{F})$ the restriction map. We claim that the foliated one-forms $r(\alpha_1),\ldots,r(\alpha_q)$ are unobstructed first order deformations of $C$ by virtue of Cor.~\ref{cor:partialunobs}. We also claim that none of them admits a closed extension, showing that this unobstructedness result cannot be obtained via Lemma~\ref{lem:closed-extension}.
		
		First note that item (1) above implies that $r(\alpha_i)$ is leafwise closed, so it indeed defines a first order deformation of $C$. Moreover, the extension $\alpha_i\in\Omega^{1}(C)$ of $r(\alpha_i)\in\Omega^{1}(\mathcal{F})$ is such that $d\alpha_i=\omega_C\in\Omega^{2}_{bas}(C)$. Hence, it follows from Lemma~\ref{lem:extension} that $[r(\alpha_i)]\in H^{1}(\mathcal{F})$ lies in the kernel of $d_{\nu}$. By Cor.~\ref{cor:partialunobs}, the first order deformations $r(\alpha_i)$ of $C$ are unobstructed.
		
		We now argue that the $r(\alpha_i)$ do not admit closed extensions. The crucial observation is that the characteristic foliation $\mathcal{F}$ of $C$ is \textbf{taut}, i.e. there exists a Riemannian metric on $C$ such that all the leaves of $\mathcal{F}$ become minimal submanifolds.
		
		\vspace{0.2cm}
		\noindent
		\underline{Claim:} The characteristic foliation $\mathcal{F}$ is taut. 
		
		\vspace{0.1cm}
		\noindent
		To prove the claim, recall that by Rummler's criterion \cite[Thm.~10.5.9]{candelconlon}, a $q$-dimensional foliation $\mathcal{F}$ on $C$ is taut exactly when there exists $\gamma\in\Omega^{q}(C)$ such that:
		\begin{enumerate}[i)]
			\item $\gamma$ restricts to a volume form on each leaf of $\mathcal{F}$,
			\item $\gamma$ is $\mathcal{F}$-closed, i.e. $d\gamma(V_1,\ldots,V_{q+1})=0$ whenever at least $q$ of the arguments $V_1,\ldots,V_{q+1}$ are tangent to $\mathcal{F}$.
		\end{enumerate}
		In the $q$-contact case, we can take $\gamma:=\alpha_1\wedge\ldots\wedge\alpha_q\in\Omega^{q}(C)$. The requirement (2) above states that $\alpha_1\wedge\ldots\wedge\alpha_q$ is nowhere zero when restricted to $\ker\omega_C=T\mathcal{F}$. This means that $\alpha_1\wedge\ldots\wedge\alpha_q$ restricts to a volume form on each leaf of $\mathcal{F}$. Moreover, evaluating
		\[
		d(\alpha_1\wedge\ldots\wedge\alpha_q)=\sum_{i=1}^{q}(-1)^{i+1}\alpha_1\wedge\ldots\wedge\alpha_{i-1}\wedge\omega_C\wedge\alpha_{i+1}\wedge\ldots\wedge\alpha_q
		\]
		on vector fields $V_1,\ldots,V_{q+1}$, at least $q$ of which are tangent to $\mathcal{F}$, gives zero. Indeed, the factor $\omega_C$ gets paired with two vector fields, at least one of which belongs to $T\mathcal{F}=\ker\omega_C$.
		
		\vspace{0.2cm}
		The above claim ensures that the class $[\omega_C]\in H^{2}_{bas}(C)$ is non-trivial. Indeed, if $\mathcal{F}$ is a taut codimension $2m$ foliation on a compact oriented manifold $C$, such that $\mathcal{F}$ is defined by a closed two-form $\omega_C\in\Omega^{2}(C)$, then the cohomology classes $[\omega_{C}^{k}]\in H^{2k}_{bas}(C)$ for $k=1,\ldots,m$ are non-trivial \cite[Thm.~4.33]{tondeur}. In the $q$-contact setting, the manifold $C$ is indeed oriented because $\alpha_1\wedge\ldots\wedge\alpha_q\wedge\omega_C^{n-q}$ is a volume form on $C$.
		
		We can now show that $r(\alpha_i)$ does not admit a closed extension. Assume by contradiction that $\beta_i$ is a closed extension of $r(\alpha_i)$. Then $\alpha_i-\beta_i\in\Omega^{1}_{bas}(C)$, because for $V\in\Gamma(T\mathcal{F})$ we have $\iota_{V}(\alpha_i-\beta_i)=0$ and 
		\[
		\pounds_{V}(\alpha_i-\beta_i)=\iota_{V}d(\alpha_i-\beta_i)=\iota_{V}d\alpha_i=\iota_{V}\omega_C=0.
		\]
		It follows that the basic form $d(\alpha_i-\beta_i)=\omega_C$ has a basic primitive $\alpha_i-\beta_i$, i.e. the class $[\omega_C]\in H^{2}_{bas}(C)$ is trivial. This is a contradiction, so $r(\alpha_i)$ has no closed extension.
	\end{ex}

	Clearly, Lemma~\ref{lem:closed-extension} does not yield many unobstructed first order deformations in practice. It only concerns closed $\beta\in\Omega^{1}(\mathcal{F})$ whose cohomology class lies in the image of the restriction map $[r]:H^{1}(C)\rightarrow H^{1}(\mathcal{F})$, and the latter is a finite dimensional subspace of $H^{1}(\mathcal{F})$. 
	
	By contrast, the kernel of $d_{\nu}:H^{1}(\mathcal{F})\rightarrow H^{1}(\mathcal{F};N^{*}\mathcal{F})$ is infinite dimensional in general\footnote{Note however that in the integral coisotropic setting considered by Ruan \cite{ruan}, this space is always finite dimensional. Indeed, if $(C,\mathcal{F})$ is integral then $\ker(d_{\nu})$ reduces to the space of flat sections of $(\mathcal{H}^{1},\nabla)$, see Ex.~\ref{ex:integral}. The latter is a finite dimensional vector space.}. 
	Hence, Cor.~\ref{cor:partialunobs} has the potential to generate considerably more unobstructed first order deformations. The following is an example in which $\ker(d_{\nu})$ is infinite dimensional.
	
	\begin{ex}\label{ex:inf}
		Consider $\mathbb{T}^{3}$ endowed with the presymplectic form $d\theta_2\wedge(d\theta_3-\cos(\theta_2)d\theta_1)$. The associated foliation is given by 
		\[
		T\mathcal{F}:=\text{Span}\{\partial_{\theta_1}+\cos(\theta_2)\partial_{\theta_3}\}.
		\]
		Hence, for every fixed value of $\theta_2$, we get either an $S^{1}$-fibration or a Kronecker foliation on $(\mathbb{T}^{2},\theta_1,\theta_3)$, depending on whether $\cos(\theta_2)$ is rational or irrational. The conormal bundle is $N^{*}\mathcal{F}=\text{Span}\{d\theta_2,d\theta_3-\cos(\theta_2)d\theta_1\}$, so we can use $\{\overline{d\theta_1}\}$ as a frame for $T^{*}\mathcal{F}=T^{*}\mathbb{T}^{3}/N^{*}\mathcal{F}$.
		Embedding this presymplectic manifold in its Gotay local model, it becomes a coisotropic submanifold. We claim that $\ker(d_{\nu})$ is infinite dimensional in this example.
		
		The map $d_{\nu}:H^{1}(\mathcal{F})\rightarrow H^{1}(\mathcal{F};N^{*}\mathcal{F})$ is given by 
		\begin{equation}\label{eq:transdif}
			d_{\nu}[g\overline{d\theta_1}]=\left[-\frac{\partial g}{\partial\theta_2}\overline{d\theta_1}\otimes d\theta_2-\frac{\partial g}{\partial\theta_3}\overline{d\theta_1}\otimes(d\theta_3-\cos(\theta_2)d\theta_1)\right].
		\end{equation}
		To see which elements are trivial in $H^{1}(\mathcal{F};N^{*}\mathcal{F})$, note that for $k,l\in C^{\infty}(\mathbb{T}^{3})$ we have
		\begin{equation}\label{eq:exact1}
			\Big\langle d_{\nabla^{*}}\big(kd\theta_2+l(d\theta_3-\cos(\theta_2)d\theta_1)\big)\big(\partial_{\theta_1}+\cos(\theta_2)\partial_{\theta_3}\big),\partial_{\theta_2}\Big\rangle
			=\frac{\partial k}{\partial\theta_1}+\cos(\theta_2)\frac{\partial k}{\partial\theta_3}-\sin(\theta_2)l
		\end{equation}
		and
		\begin{equation}\label{eq:exact2}
			\Big\langle d_{\nabla^{*}}\big(kd\theta_2+l(d\theta_3-\cos(\theta_2)d\theta_1)\big)\big(\partial_{\theta_1}+\cos(\theta_2)\partial_{\theta_3}\big),\partial_{\theta_3}\Big\rangle
			=\frac{\partial l}{\partial\theta_1}+\cos(\theta_2)\frac{\partial l}{\partial\theta_3}.
		\end{equation}
		We will now focus on foliated one-forms of the type $g(\theta_2)\overline{d\theta_1}$.
		If such a class $[g(\theta_2)\overline{d\theta_1}]$ lies in the kernel of $d_{\nu}:H^{1}(\mathcal{F})\rightarrow H^{1}(\mathcal{F};N^{*}\mathcal{F})$, then \eqref{eq:transdif}, \eqref{eq:exact1} and \eqref{eq:exact2} give 
		\begin{equation}\label{eq:system}
			\begin{dcases}
				\frac{\partial k}{\partial\theta_1}+\cos(\theta_2)\frac{\partial k}{\partial\theta_3}-\sin(\theta_2)l=-g'(\theta_2)\\
				\frac{\partial l}{\partial\theta_1}+\cos(\theta_2)\frac{\partial l}{\partial\theta_3}=0
			\end{dcases},
		\end{equation}
		for some $k,l\in C^{\infty}(\mathbb{T}^{3})$. For a fixed value $\theta_2=\theta_2^{0}$, the second equation in \eqref{eq:system} says that $l(\theta_1,\theta_2^{0},\theta_3)$ is a basic function for the foliated manifold $\big(\mathbb{T}^{2},\text{Span}\{\partial_{\theta_1}+\cos(\theta_2^{0})\partial_{\theta_3}\}\big)$. This implies that $l(\theta_1,\theta_2^{0},\theta_3)$ is constant when $\cos(\theta_2^{0})$ is irrational. Since the set of $\theta_2^{0}\in S^{1}$ for which $\cos(\theta_2^{0})$ is irrational is dense in $S^{1}$, it follows that the system of equalities
		\[
		\begin{dcases}
			\frac{\partial}{\partial\theta_1}l(\theta_1,\theta_2,\theta_3)=0\\
			\frac{\partial}{\partial\theta_3}l(\theta_1,\theta_2,\theta_3)=0
		\end{dcases}
		\]
		holds on a dense subset of $\mathbb{T}^{3}$, hence on all of $\mathbb{T}^{3}$. It follows that $l=l(\theta_2)$. The first equation in \eqref{eq:system} then implies that
		\[
		\frac{\partial k}{\partial\theta_1}+\cos(\theta_2)\frac{\partial k}{\partial\theta_3}=\sin(\theta_2)l(\theta_2)-g'(\theta_2),
		\]
		so integrating around tori $S^{1}\times\{\theta_2\}\times S^1$ we see that both sides must be zero. In conclusion,
		\[
		[g(\theta_2)\overline{d\theta_1}]\in\ker(d_{\nu})\ \Leftrightarrow\ g'(\theta_2)=l(\theta_2)\sin(\theta_2)\hspace{0.5cm}\text{for some}\ l\in C^{\infty}(S^1).
		\]
		
		Let us now consider the subspace 
		\begin{equation}\label{eq:subspace}
			\text{Span}\big\{[\cos^{n}(\theta_2)\overline{d\theta_1}]: n\geq 0\big\} \subset\ker(d_{\nu}).
		\end{equation}
		We claim that it is infinite dimensional. If not, then there would exist $m\in\mathbb{N}$ such that
		\[
		[\overline{d\theta_1}],[\cos(\theta_2)\overline{d\theta_1}],\ldots,[\cos^{m}(\theta_2)\overline{d\theta_1}]
		\]
		are linearly dependent. So there exist $c_0,\ldots,c_m\in\mathbb{R}$, not all zero, such that
		\[
		\sum_{i=0}^{m}c_i\cos^{i}(\theta_2)=\frac{\partial h}{\partial\theta_1}+\cos(\theta_2)\frac{\partial h}{\partial\theta_3}
		\]
		for some $h\in C^{\infty}(\mathbb{T}^{3})$. Integrating over the tori $S^{1}\times\{\theta_2\}\times S^1$, this implies that
		\[
		\sum_{i=0}^{m}c_i\cos^{i}(\theta_2)=0.
		\]
		But the polynomial $\sum_{i=0}^{m}c_ix^{i}$ has at most $m$ roots, whereas $\cos(\theta_2)$ takes on infinitely many values as $\theta_2$ ranges over $S^{1}$. Hence, we reach a contradiction, and therefore the subspace \eqref{eq:subspace} is infinite dimensional. The same then holds for $\ker(d_{\nu})$.
	\end{ex}
	
	We finish this subsection by highlighting a particular case of Cor.~\ref{cor:partialunobs}, which is important in the spirit of this note. Given a compact coisotropic submanifold 
	$C\subset(M,\omega)$, we noted in \S\ref{subsub:fol} that the presymplectic form $\omega_C$ induces an isomorphism $H^{1}(\mathcal{F};N^{*}\mathcal{F})\cong H^{1}(\mathcal{F};N\mathcal{F})$. Therefore, Cor.~\ref{cor:partialunobs} implies that the coisotropic deformation problem of $C$ is unobstructed when $H^{1}(\mathcal{F};N\mathcal{F})$ vanishes. The latter condition is the infinitesimal requirement for rigidity of the foliation $\mathcal{F}$, see Def.~\ref{def:inffol}.
	
	\begin{cor}\label{cor:rigidunobs}
		Let $C\subset(M,\omega)$ be a compact coisotropic submanifold with characteristic foliation $\mathcal{F}$. If $H^{1}(\mathcal{F};N\mathcal{F})$ vanishes, then the deformation problem of $C$ is unobstructed.
	\end{cor}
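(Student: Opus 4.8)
The plan is to deduce this statement directly from Cor.~\ref{cor:partialunobs}, using the transverse symplectic isomorphism recalled in \S\ref{subsub:fol}. First I would recall from \S\ref{subsub:coiso} that a first order deformation of $C$ in the classical coisotropic deformation problem is precisely a $d_{\mathcal{F}}$-closed foliated one-form $\beta\in\Omega^{1}(\mathcal{F})$, so that its cohomology class $[\beta]$ lies in $H^{1}(\mathcal{F})$. The whole point is then to observe that, under the rigidity hypothesis, the constraint imposed by Cor.~\ref{cor:partialunobs} becomes vacuous.

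Next I would invoke the isomorphism $\text{Id}\otimes\omega_{C}^{\flat}$ from \S\ref{subsub:fol}, which identifies $H^{1}(\mathcal{F};N\mathcal{F})$ with $H^{1}(\mathcal{F};N^{*}\mathcal{F})$ since $\mathcal{F}$ is transversely symplectic. The hypothesis $H^{1}(\mathcal{F};N\mathcal{F})=0$ therefore forces $H^{1}(\mathcal{F};N^{*}\mathcal{F})=0$; in other words, the target of the transverse differentiation map $d_{\nu}\colon H^{1}(\mathcal{F})\rightarrow H^{1}(\mathcal{F};N^{*}\mathcal{F})$ is the zero space. Consequently $\ker(d_{\nu})=H^{1}(\mathcal{F})$, so the cohomology class of \emph{every} closed foliated one-form $\beta$ lies in $\ker(d_{\nu})$ automatically.

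With this in hand, the result follows by applying Cor.~\ref{cor:partialunobs} to each first order deformation $\beta\in\Omega^{1}(\mathcal{F})$: since $[\beta]\in\ker(d_{\nu})$, that corollary produces a smooth path in $\text{Def}_{\mathcal{F}}^{U}(C)$ with $\alpha_{0}=0$ and $\dt{\alpha_{0}}=\beta$, exhibiting $\beta$ as unobstructed. The only point requiring any attention—and it is the closest thing to an obstacle here—is to confirm that unobstructedness in the sense of Cor.~\ref{cor:partialunobs} (a path inside $\text{Def}_{\mathcal{F}}^{U}(C)$) really implies unobstructedness for the full coisotropic deformation problem. This is immediate, because a path in $\text{Def}_{\mathcal{F}}^{U}(C)$ is a fortiori a path of coisotropic sections of $(U,\Omega_{G})$ starting at the zero section, which is exactly what the classical notion demands. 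Thus the statement is a formal consequence of the partial unobstructedness result, specialized to the infinitesimally rigid case, and no further computation is needed.
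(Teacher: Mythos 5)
Your proof is correct and is essentially the paper's own argument: the paper likewise deduces the corollary from Cor.~\ref{cor:partialunobs} by noting that the isomorphism $H^{1}(\mathcal{F};N^{*}\mathcal{F})\cong H^{1}(\mathcal{F};N\mathcal{F})$ from \S\ref{subsub:fol} makes the kernel condition on $d_{\nu}$ vacuous when $H^{1}(\mathcal{F};N\mathcal{F})=0$. Your closing remark that a path in $\text{Def}_{\mathcal{F}}^{U}(C)$ is in particular a path of coisotropic sections is the same observation implicit in the paper's passage from Thm.~\ref{thm:main} to Cor.~\ref{cor:partialunobs}.
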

	
	We now display a non-trivial example of this type.
	
	\begin{ex}\label{ex:rigid}
		We revisit certain suspension foliations on mapping tori which were shown to be rigid in \cite{anosov}. We find conditions under which such a foliation $\mathcal{F}$ is given by the kernel of a presymplectic form. Gotay's theorem then guarantees that the mapping torus in question arises as a coisotropic submanifold with characteristic foliation $\mathcal{F}$.
		
		Pick a matrix $A\in SL(n,\mathbb{Z})$, where $n\geq 2$, which is diagonalizable over $\mathbb{R}$ with positive eigenvalues. Denote the eigenvalues by
		\[
		\mu_1,\ldots,\mu_p,\lambda_1,\ldots,\lambda_q,
		\]  
		where $p+q=n$. We view $A$ as a diffeomorphism of the torus $\mathbb{T}^{n}$. Pick independent linear vector fields $X_1,\ldots,X_p,Y_1,\ldots,Y_q\in\mathfrak{X}(\mathbb{T}^{n})$ such that\footnote{Put differently, $X_j$ (resp. $Y_k$) is a vector field whose coefficients are constant, given by the components of an eigenvector of $A$ for the eigenvalue $\mu_j$ (resp. $\lambda_k$).}
		\[
		\begin{cases}
			A_{*}X_j=\mu_jX_j,\\
			A_{*}Y_k=\lambda_k Y_k.
		\end{cases}
		\]
		The foliation $\text{Span}\{X_1,\ldots,X_p\}$ on $\mathbb{T}^{n}$ is invariant under $A$, hence the product foliation $\text{Span}\{X_1,\ldots,X_p,\partial_t\}$ on $\mathbb{T}^{n}\times\mathbb{R}$ descends to a foliation $\mathcal{F}$ on the mapping torus 
		\begin{equation}\label{eq:mappingtorus}
			\mathbb{T}_{A}:=\frac{\mathbb{T}^{n}\times\mathbb{R}}{(\theta,t)\sim(A(\theta),t+1)}.
		\end{equation}
		Assume moreover that the matrix $A$ satisfies the following two conditions:
		\begin{enumerate}
			\item The eigenvalues $\lambda_k$ and the quotients $\lambda_k/\mu_j$ are different from $1$.
			\item There is a basis of $\mathbb{R}^{n}$ given by eigenvectors $v_1,\ldots,v_p,w_1,\ldots,w_q$ of $A$ (corresponding respectively to the eigenvalues $\mu_1,\ldots,\mu_p,\lambda_1,\ldots,\lambda_q$) with the property that for any $i=1,\ldots,p$, the coordinates $v_i^{1},\ldots,v_i^{n}$ of $v_i$ are linearly independent over $\mathbb{Q}$.
		\end{enumerate}
		Under these assumptions, also the eigenvalues $\mu_1,\ldots,\mu_p$ are different from $1$, and therefore $A$ is an Anosov diffeomorphism of $\mathbb{T}^{n}$. If the eigenvalues of $A$ are all different, then condition $(2)$ is satisfied exactly when the characteristic polynomial of $A$ is irreducible over $\mathbb{Q}$.
		
		The main result of \cite{anosov} is that, under conditions $(1)$ and $(2)$, the foliation $\mathcal{F}$ is rigid. The proof proceeds by showing that the Bott complex $\big(\Omega^{\bullet}(\mathcal{F};N\mathcal{F}),d_{\nabla}\big)$ admits ``tame'' homotopy operators; this implies in particular that $H^{1}(\mathcal{F};N\mathcal{F})$ vanishes. 
		To realize $(\mathbb{T}_{A},\mathcal{F})$ as an instance of Cor.~\ref{cor:rigidunobs}, we have to find out when $\mathcal{F}$ is defined by a presymplectic form.
		
		\vspace{0.2cm}
		\noindent
		\underline{Claim:} The foliation $\mathcal{F}$ is defined by a presymplectic form on $\mathbb{T}_{A}$ exactly when the multiset $\{\lambda_1,\ldots,\lambda_q\}$ has the following property: if $\xi$ occurs with multiplicity $m$, then also $1/\xi$ occurs with multiplicity $m$.
		
		\vspace{0.1cm}
		\noindent
		Recall that all eigenvalues are assumed to be positive and different from $1$. Hence, the property mentioned in the claim implies in particular that $q$ is even, as it should be.
		To prove the claim, we denote by $\{\alpha_1,\ldots,\alpha_p,\beta_1,\ldots,\beta_q\}$ the frame of $T^{*}\mathbb{T}^{n}$ dual to the frame $\{X_1,\ldots,X_p,Y_1,\ldots,Y_q\}$ of $T\mathbb{T}^{n}$. This way, we obtain a frame
		\begin{equation}\label{eq:frame}
			\big\{dt,\mu_1^{-t}\alpha_1,\ldots,\mu_p^{-t}\alpha_p,\lambda_1^{-t}\beta_1,\ldots,\lambda_q^{-t}\beta_q\big\}
		\end{equation}
		for $T^{*}\mathbb{T}_{A}$. These are indeed $1$-forms on $\mathbb{T}^{n}\times\mathbb{R}$ invariant under the identification in \eqref{eq:mappingtorus}. 
		\begin{enumerate}[i)]
			\item For the forward implication, assume that $\omega$ is a presymplectic form on $\mathbb{T}_{A}$ with kernel $T\mathcal{F}$. Expressing $\omega$ in the frame \eqref{eq:frame} gives an equation of the form
			\[
			\omega=\sum_{i<j}f_{ij}(\theta,t)\lambda_i^{-t}\lambda_j^{-t}\beta_i\wedge\beta_j,
			\]
			where $f_{ij}$ satisfies
			\begin{equation}\label{eq:fij}
				f_{ij}\big(A(\theta),t+1\big)=f_{ij}(\theta,t).
			\end{equation}
			
			We now assert that, if $f_{ij}$ is not identically zero, then $\lambda_i\lambda_j=1$. To prove this, note that since $\pounds_{\partial_t}\omega$ vanishes, we have 
			\[
			f_{ij}\big(A(\theta),t\big)\lambda_i^{-t}\lambda_j^{-t}=f_{ij}\big(A(\theta),t+1\big)\lambda_i^{-t-1}\lambda_j^{-t-1}=f_{ij}(\theta,t)\lambda_i^{-t-1}\lambda_j^{-t-1},
			\]
			where we also used \eqref{eq:fij}. This implies that
			\begin{equation}\label{eq:rec2}
				\lambda_i\lambda_jf_{ij}\big(A(\theta),t)=f_{ij}(\theta,t).
			\end{equation}
			Now take a rational point $(\theta_1,\ldots,\theta_n)\in\mathbb{Q}^{n}/\mathbb{Z}^{n}$. Such a point is periodic for $A$. Indeed, it can be written as $(p_1/q,\ldots,p_n/q)$ for some $0\leq p_j<q$, and applying iterates of $A$ to it yields values in the finite set
			\[
			\big\{(q_1/q,\ldots,q_n/q):\ 0\leq q_j<q\big\}.
			\]
			Hence, the equality \eqref{eq:rec2} implies that for any point $(\theta_1,\ldots,\theta_n,t)\in \mathbb{Q}^{n}/\mathbb{Z}^{n}\times\mathbb{R}$, there exists an integer $k\geq 1$ such that
			\begin{equation}\label{eq:power}
				(\lambda_i\lambda_j)^{k}f_{ij}(\theta,t)=f_{ij}(\theta,t).
			\end{equation}
			If $f_{ij}$ is not identically zero, then there exists a point $(\theta_1,\ldots,\theta_n,t)\in \mathbb{Q}^{n}/\mathbb{Z}^{n}\times\mathbb{R}$ where $f_{ij}$ does not vanish. Hence, the equality \eqref{eq:power}, along with the fact that the eigenvalues are positive, yields that $\lambda_i\lambda_j=1$. The assertion is proved.
			
			This shows that, if $\xi$ occurs in the list $\{\lambda_1,\ldots,\lambda_q\}$, then also $1/\xi$ occurs in that list. For otherwise, there would exist $j\in\{1,\ldots,q\}$ so that the vector field $\xi^{t} Y_j\in\mathfrak{X}(\mathbb{T}_{A})$ lies in the kernel of $\omega$, while also being transverse to $\mathcal{F}$. To show that the multiplicities of $\xi$ and $1/\xi$ agree, we can assume that they are given by $k$ and $l$ respectively, and that
			\[
			\lambda_1=\cdots=\lambda_k=\xi\hspace{1cm}\text{and}\hspace{1cm}\lambda_{k+1}=\cdots=\lambda_{k+l}=1/\xi.	
			\]
			We know that the presymplectic form $\omega$ defines an isomorphism
			\[
			\omega^{\flat}:\text{Span}\big\{\lambda_1^{t}Y_1,\ldots,\lambda_q^{t}Y_q\big\}\overset{\sim}{\longrightarrow}\text{Span}\big\{\lambda_1^{-t}\beta_1,\ldots,\lambda_q^{-t}\beta_q\big\},
			\]
			and that at every point, it takes the subspace spanned by $\xi^{t}Y_1,\ldots,\xi^{t}Y_k$ into the subspace spanned by $(1/\xi)^{-t}\beta_{k+1},\ldots,(1/\xi)^{-t}\beta_{k+l}$. Hence, $l\geq k$. Because the argument is symmetric in $\xi$ and $1/\xi$, it follows that $l=k$. This proves the forward implication.
			\item For the backward implication, we have by assumption that $q=2k$. Moreover, after renumbering the elements of the list $\lambda_1,\ldots,\lambda_q$, we can assume that it is given by
			\[
			\lambda_1,\ldots,\lambda_k,\lambda_{k+1}=\lambda_1^{-1},\ldots,\lambda_{q}=\lambda_k^{-1}.
			\]
			We can then write down a well-defined two-form $\omega\in\Omega^{2}(\mathbb{T}_{A})$, given by
			\[
			\omega=\sum_{i=1}^{k}\beta_i\wedge\beta_{k+i}.
			\] 
			Note that the $\beta_i\in\Omega^{1}(\mathbb{T}^{n})$ are closed, since they are part of the frame dual to the commuting frame $\{X_1,\ldots,X_p,Y_1,\ldots,Y_q\}$ of $T\mathbb{T}^{n}$. This implies that $\omega$ is also closed. Clearly, the kernel of $\omega$ is given by $T\mathcal{F}$. This proves the backward implication.  
		\end{enumerate}
		
		Let us now give a concrete example of the type we just described. To find a matrix $A\in SL(n,\mathbb{Z})$ which satisfies the property mentioned in the claim above, it is useful to consider symplectic matrices. These always have determinant equal to $1$ \cite[Lemma~1.1.15]{mcduff}, and their characteristic polynomial has the property that, if $\xi$ is a root of multiplicity $m$, then also $1/\xi$ is a root of the same multiplicity \cite[Lemma~2.2.2]{mcduff}.
		Take for instance\footnote{A useful way to construct symplectic matrices is the following \cite[Example 17]{symplmatrix}. Let $X$ and $Y$ be two symmetric $n\times n$ matrices, with $X$ invertible. Then
			\[
			S:=\begin{pmatrix}
				X+YX^{-1}Y & YX^{-1}\\
				X^{-1}Y & X^{-1}
			\end{pmatrix}
			\]
			is a symplectic matrix. The matrix $A$ above is obtained by taking
			\[
			X=\begin{pmatrix}
				1 & 0\\
				0 & 1
			\end{pmatrix},\  Y=\begin{pmatrix}
				1 & 1\\
				1 & 0
			\end{pmatrix}.
			\]
		}
		\[
		A:=\begin{pmatrix}
			3 & 1 & 1 & 1\\
			1 & 2 & 1 & 0\\
			1 & 1 & 1 & 0\\
			1 & 0 & 0 & 1
		\end{pmatrix}\in SL(4,\mathbb{Z}).
		\] 
		The eigenvalues are
		\[
		\mu\approx 4.39, \mu^{-1}, \lambda\approx 1.84, \lambda^{-1},
		\]
		in particular they are all distinct. The characteristic polynomial is 
		\[
		X^{4}-7X^{3}+13X^{2}-7X+1.
		\]
		Its image in $(\mathbb{Z}/2\mathbb{Z})[X]$ is the irreducible polynomial $X^{4}+X^{3}+X^{2}+X+1$, hence the characteristic polynomial is irreducible over $\mathbb{Z}$ and therefore over $\mathbb{Q}$. It follows that the assumptions $(1)$ and $(2)$ from \cite{anosov} are satisfied. Hence, taking for instance linear vector fields $X_1,X_2$ on $\mathbb{T}^{4}$ satisfying
		\[
		\begin{cases}
			A_{*}X_1=\mu X_1\\
			A_{*}X_2=\mu^{-1} X_2
		\end{cases},
		\]
		the associated suspension foliation $\mathcal{F}$ on the mapping torus $\mathbb{T}_{A}$ is rigid. Now pick a presymplectic form $\omega$ on $\mathbb{T}_{A}$ with kernel $T\mathcal{F}$ and embed $(\mathbb{T}_{A},\omega)$ coisotropically into its Gotay local model. The result is a coisotropic submanifold whose deformation problem is unobstructed.
	\end{ex}
	
	\begin{remark}
		The fact that the deformation problem of the coisotropic submanifolds from Ex.~\ref{ex:rigid} is unobstructed also follows from Lemma~\ref{lem:closed-extension}, because the restriction  
		\begin{equation}\label{eq:surj}
			H^{1}(\mathbb{T}_{A})\rightarrow H^{1}(\mathcal{F})
		\end{equation}
		is surjective. To see why, we recall that leafwise cohomology of suspension foliations can be computed explicitly using a Mayer-Vietoris argument, see \cite[\S 1.3.3]{foliations}.
		Denoting by $\mathcal{F}_{0}$ the foliation on $\mathbb{T}^{n}$ given by
		\[
		T\mathcal{F}_{0}=\text{Span}\{X_1,\ldots,X_p\},
		\]
		one gets
		\begin{equation}\label{eq:folcohomology}
			H^{1}(\mathcal{F})=\ker\Big(\text{Id}-[A^{*}]:H^{1}(\mathcal{F}_{0})\rightarrow H^{1}(\mathcal{F}_{0})\Big)\oplus\frac{H^{0}(\mathcal{F}_{0})}{\text{im}\Big(\text{Id}-[A^{*}]:H^{0}(\mathcal{F}_{0})\rightarrow H^{0}(\mathcal{F}_{0})\Big)}.
		\end{equation}
		The assumptions $(1)$ and $(2)$ from \cite{anosov} (recalled in Ex.~\ref{ex:rigid} above) ensure that $\mathcal{F}_{0}$ is a so-called Diophantine linear foliation on $\mathbb{T}^{n}$, and leafwise cohomology of such foliations was computed in \cite{arraut} (see also \cite[Thm.~1.3.7]{foliations}). Continuing in the notation of Ex.~ \ref{ex:rigid}, one has
		\[
		H^{1}(\mathcal{F}_{0})=\mathbb{R}[r(\alpha_1)]\oplus\cdots\oplus\mathbb{R}[r(\alpha_p)],
		\]
		where $r:\Omega^{1}(\mathbb{T}^{n})\rightarrow\Omega^{1}(\mathcal{F}_{0})$ is the restriction map. Because $A^{*}\alpha_j=\mu_j\alpha_j$ and $\mu_1,\ldots,\mu_q$ are different from $1$, it follows that the map $\text{Id}-[A^{*}]:H^{1}(\mathcal{F}_{0})\rightarrow H^{1}(\mathcal{F}_{0})$ is injective, hence the first summand in \eqref{eq:folcohomology} vanishes. Because the leaves of $\mathcal{F}_{0}$ are dense, $H^{0}(\mathcal{F}_{0})$ consists of just the constant functions on $\mathbb{T}^{n}$. Hence, the expression \eqref{eq:folcohomology} shows that $H^{1}(\mathcal{F})\cong\mathbb{R}$.
		
		A representative is given by the restriction of $dt$ to the leaves of $\mathcal{F}$. To see that this is a non-exact foliated one-form, let $L$ be the leaf of $\mathcal{F}$ through a rational point $\overline{(\theta,t)}\in\mathbb{T}_{A}$, i.e. $\theta\in\mathbb{Q}^{n}/\mathbb{Z}^{n}$. The flow line of $\partial_t$ through $\overline{(\theta,t)}$ stays inside $L$ and is closed, because $\theta$ is a periodic point for $A$ (see Ex. \ref{ex:rigid} i)). By Stokes' theorem, the restriction of $dt$ to the leaf $L$ cannot be exact. Since $H^{1}(\mathcal{F})$ is spanned by the class of $dt$, it is now clear that the restriction map \eqref{eq:surj} is indeed surjective.
	\end{remark}

	\subsection{The Kuranishi criterion}
	\vspace{0.1cm}
	\noindent
	
	In this subsection, we recall that the deformation problem of a coisotropic submanifold $C\subset(M,\omega)$ is governed by a suitable $L_{\infty}[1]$-algebra. The latter comes with a tool that allows one to detect obstructed first order deformations, called the Kuranishi criterion. We check that the partial unobstructedness result from Cor.~\ref{cor:partialunobs} is consistent with this criterion.
	
	\begin{defi}
		\begin{enumerate}[i)]
			\item An $\mathbf{L_{\infty}[1]}$\textbf{-algebra} is a $\mathbb{Z}$-graded vector space $W$, equipped with a collection of graded symmetric multibrackets $(\lambda_k:W^{\otimes k}\rightarrow W)_{k\geq 1}$ of degree $1$ which satisfy a collection of relations \cite{lada} called higher Jacobi identities.
			\item The \textbf{Maurer-Cartan series} of an element $w\in W$ of degree $0$ is the infinite sum
			\[
			MC(w):=\sum_{k=1}^{\infty}\frac{1}{k!}\lambda_k(w^{\otimes k}).
			\]
			A \textbf{Maurer-Cartan element} is a degree zero element $w\in W$ for which the Maurer-Cartan series converges to zero.
			
			\item We say that an $L_{\infty}[1]$-algebra $\big(W,\{\lambda_k\}\big)$ governs a certain deformation problem if small deformations correspond with Maurer-Cartan elements of $\big(W,\{\lambda_k\}\big)$.
		\end{enumerate}
	\end{defi}
	
	\begin{remark}
		The higher Jacobi identities mentioned above imply in particular that $\lambda_1$ is a differential, which gives rise to cohomology groups $H(W)$. Moreover, the binary bracket $\lambda_2$ descends to the cohomology $H(W)$, and this fact which will be essential in the sequel.
	\end{remark}
	
	Oh and Park showed in \cite{oh-park} that every coisotropic submanifold $C\subset(M,\omega)$ has an attached $L_{\infty}[1]$-algebra structure. We recall an elegant description of it in terms of derived brackets, which is due to Cattaneo and Felder \cite{cattaneo-felder}. The $L_{\infty}[1]$-algebra associated with $C\subset(M,\omega)$ is obtained using the Gotay local model $(U,\Omega_G)$, for a choice of complement $G$ to the characteristic foliation $T\mathcal{F}$ on $C$. Its multibrackets $\{\lambda_k\}_{k\geq 1}$ are defined on the graded vector space $\Gamma(\wedge T^{*}\mathcal{F})[1]$, by the formula\footnote{The graded vector space $\Gamma(\wedge T^{*}\mathcal{F})[1]$ is just $\Gamma(\wedge T^{*}\mathcal{F})$ up to degree shift. Namely, a degree $k$ element of $\Gamma(\wedge T^{*}\mathcal{F})[1]$ lives in $\Gamma(\wedge^{k+1} T^{*}\mathcal{F})$.}
	\begin{equation}\label{multibracket}
		\lambda_{k}(\xi_1,\ldots,\xi_k)=P\left([[\cdots[\Pi_G,\xi_1],\cdots,\xi_{k-1}],\xi_k]\right).
	\end{equation}
	Here $\Pi_G:=-\Omega_G^{-1}$ denotes the Poisson structure corresponding with $\Omega_G$, and we consider the sections $\xi_i\in\Gamma(\wedge T^{*}\mathcal{F})$ as vertical fiberwise constant multivector fields on $T^{*}\mathcal{F}$ via the correspondence in Rem.~\ref{rem:corres}. The bracket $[-,-]$ appearing in \eqref{multibracket} is the Schouten-Nijenhuis bracket of multivector fields, and the map $P$ acts on multivector fields by first restricting them to $C$ and then taking the vertical projection $\Gamma(\wedge^{\bullet}T(T^{*}\mathcal{F})|_{C})\rightarrow\Gamma(\wedge^{\bullet}T^{*}\mathcal{F})$ coming from the splitting $T(T^{*}\mathcal{F})|_{C}=TC\oplus T^{*}\mathcal{F}$.
	While the work of Oh-Park addresses formal deformations of $C$, it was shown by Sch\"{a}tz-Zambon in \cite{fiberwise} that the $L_{\infty}[1]$-algebra $\big(\Gamma(\wedge T^{*}\mathcal{F})[1],\{\lambda_k\}\big)$ actually governs the smooth coisotropic deformation problem of $C$.

	\begin{prop}[\cite{fiberwise}]\label{prop:fib}
		Let $C\subset(M,\omega)$ be a coisotropic submanifold with Gotay local model $(U,\Omega_G)$. For any $\alpha\in\Gamma(T^{*}\mathcal{F})$ whose graph is contained in $U$, the Maurer-Cartan series $MC(-\alpha)$ is pointwise convergent. For such $\alpha$, the following are equivalent:
		\begin{enumerate}
			\item $\text{Graph}(\alpha)$ is coisotropic in $(U,\Omega_G)$,
			\item The Maurer-Cartan series $MC(-\alpha)$ converges to zero.
		\end{enumerate} 
	\end{prop}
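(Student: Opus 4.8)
The plan is to recognize the Maurer--Cartan series as the Taylor expansion, along the fibers of $p\colon T^{*}\mathcal{F}\to C$, of the Poisson bivector $\Pi_G$, and to read off coisotropy of $\text{Graph}(\alpha)$ from the vanishing of its fiber-vertical part. First I would simplify the iterated derived brackets in \eqref{multibracket}. Writing $V_\alpha$ for the fiberwise constant vertical vector field corresponding to $\alpha$ under Rem.~\ref{rem:corres}, and using that for a multivector $Q$ and a vector field $X$ the Schouten bracket satisfies $[Q,X]=-\pounds_{X}Q$, an induction shows that the $k$-fold bracket $[[\cdots[\Pi_G,-\alpha],\cdots],-\alpha]$ equals $\pounds_{V_\alpha}^{k}\Pi_G$. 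Hence, formally,
\[
MC(-\alpha)=\sum_{k\geq 1}\frac{1}{k!}P\big(\pounds_{V_\alpha}^{k}\Pi_G\big)=P\big((e^{\pounds_{V_\alpha}}-\mathrm{Id})\Pi_G\big).
\]
Since $V_\alpha$ is fiberwise constant, its time-one flow is the fiber translation $t_\alpha\colon(x,\nu)\mapsto(x,\nu+\alpha(x))$, so $e^{\pounds_{V_\alpha}}\Pi_G=t_\alpha^{*}\Pi_G$. Moreover $P(\Pi_G)=0$, because the zero section $C$ is coisotropic for $\Pi_G$ and $P(\Pi_G)$ is precisely its fiber-vertical part; the target identity is therefore $MC(-\alpha)=P\big(t_\alpha^{*}\Pi_G\big)$.

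Next comes convergence, which I expect to be the main obstacle. The key structural point is that $\Omega_G=p^{*}\omega_C+j^{*}\omega_{can}$ is affine along the fibers: $j^{*}\theta_{can}$ is fiber-linear, so $\Omega_G$ is a polynomial of degree $\leq 1$ in the fiber coordinate, with vanishing vertical--vertical block and a fiber-constant, maximal-rank vertical--horizontal block. Consequently $\Pi_G=-\Omega_G^{-1}$ is fiberwise real-analytic on $U$; writing $\Omega_G(\nu)=\Omega_G(0)+\nu\!\cdot\!C'$ with $C'$ supported in the horizontal--horizontal block, one obtains a Neumann expansion whose fiber-vertical part is exactly $\sum_{k\geq 1}\tfrac{1}{k!}P(\pounds_{V_\alpha}^{k}\Pi_G)$. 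Shrinking $U$ fiberwise if necessary so that this geometric-type series converges, one gets pointwise convergence on $U$ with sum $P(t_\alpha^{*}\Pi_G)$; this is the fiberwise analysis I would attribute to \cite{fiberwise}. The delicate point is precisely that for a non-integrable complement $G$ the series is genuinely infinite (the inverse of a fiber-affine form is fiber-rational, not polynomial), so one cannot argue by termination but must control the radius of convergence rather than merely the analyticity of $\Pi_G$.

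Finally I would prove the equivalence. Having $MC(-\alpha)=P(t_\alpha^{*}\Pi_G)$, I transport coisotropy through the translation: since $t_\alpha$ maps the zero section onto $\text{Graph}(\alpha)$, the graph is coisotropic in $(U,\Omega_G)$ iff the zero section $C$ is coisotropic for the pushed-back Poisson structure $t_\alpha^{*}\Pi_G$. For the zero section, with the splitting $T(T^{*}\mathcal{F})|_{C}=TC\oplus T^{*}\mathcal{F}$, the map $N^{*}C\to NC$ built from the anchor of a Poisson bivector is exactly its fiber-vertical part, i.e. $P$ applied to it; thus $C$ is coisotropic for $t_\alpha^{*}\Pi_G$ iff $P(t_\alpha^{*}\Pi_G)=0$. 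Chaining these equivalences yields $\text{Graph}(\alpha)$ coisotropic $\iff MC(-\alpha)=0$. As a consistency check, because $\Omega_G$ is symplectic this Poisson coisotropy coincides with the constant-rank condition on $\alpha^{*}\Omega_G=\omega_C-d(j(\alpha))$ from Prop.~\ref{F}, which identifies $MC(-\alpha)$ with the leafwise obstruction to $T\mathcal{F}$ remaining in the kernel of $\alpha^{*}\Omega_G$.
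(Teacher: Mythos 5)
This proposition is not proved in the paper: it is imported verbatim from Sch\"{a}tz--Zambon \cite{fiberwise}, so there is no internal proof to compare against. Your reconstruction follows essentially the same route as that reference: the iterated derived brackets with the fiberwise constant vertical field $V_\alpha$ are iterated Lie derivatives, so $MC(-\alpha)$ is the fiberwise Taylor series of $P(t_\alpha^{*}\Pi_G)$ (using $P(\Pi_G)=0$), and its vanishing is exactly coisotropy of $\text{Graph}(\alpha)$, because $t_\alpha$ carries the zero section to the graph and hence $C$ is coisotropic for $t_\alpha^{*}\Pi_G$ if and only if $\text{Graph}(\alpha)$ is coisotropic for $\Pi_G$. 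One clarification: the ``shrinking $U$ fiberwise'' step you mention is not optional fine print but is exactly the hypothesis under which the result holds --- since $\Pi_G$ is only fiber-rational, its fiberwise Neumann series can diverge at points where $\Omega_G$ is still symplectic, so the proposition must be read with $U$ chosen inside the fiberwise domain of convergence (the ``fibrewise entire'' condition of \cite{fiberwise}), which is legitimate because Gotay's theorem allows one to take $U$ as small as needed.
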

	The differential $\lambda_1$ is just the leafwise de Rham differential $d_{\mathcal{F}}$ up to sign, see \cite[Proof of Prop.~3.5]{fiberwise}. Hence, the linearization of the Maurer-Cartan equation is just the closedness condition with respect to $d_{\mathcal{F}}$. This is another way to see that first order deformations of $C$ are closed foliated one-forms $\beta\in\Omega^{1}(\mathcal{F})$, a result which we already proved in Lemma~\ref{lem:infcoiso}.
	
	If $\beta\in\Omega^{1}(\mathcal{F})$ is an unobstructed first order deformation of $C$, then there exists a smooth path $\alpha_t$ of coisotropic sections of $(U,\Omega_G)$ starting at $\alpha_0=0$, satisfying $\dt{\alpha_0}=\beta$. By Prop.~\ref{prop:fib} above, $-\alpha_t$ is a Maurer-Cartan element of $\big(\Gamma(\wedge T^{*}\mathcal{F})[1],\{\lambda_k\}\big)$ for all $t$. Differentiating twice the equality
	\[
	MC(-\alpha_t)=0
	\]
	and evaluating at time $t=0$, it follows that $\lambda_2(\beta,\beta)$ is exact in $\big(\Omega(\mathcal{F}),d_{\mathcal{F}}\big)$. This is the \textbf{Kuranishi criterion} for unobstructedness of a first order deformation \cite[Thm.~11.4]{oh-park}.
	
	\begin{prop}
		The \textbf{Kuranishi map} of the $L_{\infty}[1]$-algebra $\big(\Gamma(\wedge T^{*}\mathcal{F})[1],\{\lambda_k\}\big)$ is 
		\[
		Kr:H^{1}(\mathcal{F})\rightarrow H^{2}(\mathcal{F}):[\beta]\mapsto[\lambda_2(\beta,\beta)].
		\]
		If $\beta\in\Omega^{1}(\mathcal{F})$ is an unobstructed first order deformation of $C$, then $Kr([\beta])$ vanishes.
	\end{prop}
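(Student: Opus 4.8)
The plan is to treat the two assertions in turn: first that $Kr$ is a well-defined map on cohomology, and then that it annihilates every unobstructed first order deformation. For the former, I would use that $\lambda_1$ coincides with $d_{\mathcal{F}}$ up to sign, together with the higher Jacobi identity expressing that $\lambda_1$ is a (graded) derivation of $\lambda_2$. Schematically, up to signs,
\[
\lambda_1\big(\lambda_2(\xi_1,\xi_2)\big)=\lambda_2\big(\lambda_1\xi_1,\xi_2\big)\pm\lambda_2\big(\xi_1,\lambda_1\xi_2\big).
\]
This identity shows that $\lambda_2$ carries a pair of $\lambda_1$-cocycles to a $\lambda_1$-cocycle, and that altering a cocycle by a coboundary alters $\lambda_2$ by a coboundary; hence $\lambda_2$ descends to $H^{\bullet}(\mathcal{F})$ (as already noted in the remark following the definition of $L_{\infty}[1]$-algebra), and in particular $[\beta]\mapsto[\lambda_2(\beta,\beta)]$ depends only on $[\beta]$. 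This is exactly the map $Kr$.

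For the vanishing statement, suppose $\beta$ is unobstructed, so there is a smooth path $\alpha_t$ of coisotropic sections of $(U,\Omega_G)$ with $\alpha_0=0$ and $\dt{\alpha_0}=\beta$. Setting $w_t:=-\alpha_t$, Prop.~\ref{prop:fib} guarantees that $w_t$ is a Maurer--Cartan element for every $t$, i.e. $MC(w_t)=0$. I would then differentiate this identity twice in $t$ and evaluate at $t=0$. Because $w_0=0$ and each $\lambda_k$ is $k$-linear, the $k$-th summand of $MC(w_t)$ vanishes to order $k$ at $t=0$, so only the terms $k=1$ and $k=2$ survive in the second derivative. Using $\frac{d}{dt}\big|_{0}w_t=-\beta$ and the symmetry of $\lambda_2$, the second derivative reduces to
\[
0=\lambda_1\Big(\tfrac{d^{2}}{dt^{2}}\big|_{0}w_t\Big)+\lambda_2(\beta,\beta),
\]
whence $\lambda_2(\beta,\beta)=-\lambda_1\big(\tfrac{d^{2}}{dt^{2}}|_{0}w_t\big)$ is $d_{\mathcal{F}}$-exact and $Kr([\beta])=0$.

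The one point requiring genuine care, and which I expect to be the main obstacle, is the justification of differentiating the infinite Maurer--Cartan series term by term. This is where the pointwise convergence in Prop.~\ref{prop:fib} and the order-of-vanishing observation above do the real work: they allow one to control the tail $\sum_{k\geq 3}$ and conclude that it contributes nothing to the first two derivatives at $t=0$. Once this is in place, the remainder is the routine combinatorial bookkeeping carried out above.
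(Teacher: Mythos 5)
Your proposal is correct and takes essentially the same route as the paper: the paper likewise relies on the remark that $\lambda_2$ descends to cohomology to make $Kr$ well defined, and it obtains the vanishing statement exactly as you do, by invoking Prop.~\ref{prop:fib} to see that $-\alpha_t$ is a Maurer--Cartan element for all $t$ and then differentiating $MC(-\alpha_t)=0$ twice at $t=0$, so that only the $\lambda_1$ and $\lambda_2$ terms survive and $\lambda_2(\beta,\beta)$ is $d_{\mathcal{F}}$-exact. The term-by-term differentiation issue you flag is not treated in the paper either (it defers to the Kuranishi criterion of \cite{oh-park} and the convergence framework of \cite{fiberwise}), so your explicit order-of-vanishing observation is, if anything, slightly more careful than the paper's own account.
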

	
	We know from Cor.~\ref{cor:partialunobs} that first order deformations $\beta$ whose cohomology class lies in the kernel of $d_{\nu}:H^{1}(\mathcal{F})\rightarrow  H^{1}(\mathcal{F};N^{*}\mathcal{F})$ are unobstructed. Hence, we should have that $\ker(d_{\nu})\subset\ker(Kr)$. In the following, we double-check that this inclusion holds.
	
	We will actually prove a more interesting result. It is clear from the expression \eqref{multibracket} that the multibrackets $\lambda_k$ of the $L_{\infty}[1]$-algebra $\big(\Gamma(\wedge T^{*}\mathcal{F})[1],\{\lambda_k\}\big)$ depend on the choice of complement $G$ to $T\mathcal{F}$. By contrast, we will show that the Kuranishi map admits a canonical description in terms of the presymplectic form $\omega_C$ and the map $d_{\nu}$. This result highlights once more the role played by the transverse differentiation map $d_{\nu}$ in the coisotropic deformation problem, and it will imply in particular that  $\ker(d_{\nu})\subset\ker(Kr)$.
	
	\begin{prop}\label{prop:binary}
		Let $C\subset(M,\omega)$ be a coisotropic submanifold. Choose a complement $G$ to the characteristic distribution $T\mathcal{F}$, and let $\left(\Gamma(\wedge T^{*}\mathcal{F})[1],\{\lambda_k\}\right)$ denote the $L_{\infty}[1]$-algebra associated with the corresponding Gotay local model. Its binary bracket satisfies
		\begin{equation}\label{eq:bin-br}
		\lambda_2(\alpha,\beta)=-\big\langle\!\big\langle\big(\text{Id}\otimes(\omega_C^{\flat})^{-1}\big)(\tau(d_{1,0}\alpha)),\tau(d_{1,0}\beta)\big\rangle\!\big\rangle,\hspace{0.5cm}\alpha,\beta\in\Omega^{1}(\mathcal{F}).
		\end{equation}
	\end{prop}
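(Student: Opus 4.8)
The plan is to compute the derived bracket \eqref{multibracket} for $k=2$ directly in the Gotay model and to isolate the transverse block of the Poisson bivector $\Pi_G=-\Omega_G^{-1}$, where the factor $(\omega_C^{\flat})^{-1}$ will appear. Work in a foliated chart $(x^a,y^i)$ with $T\mathcal{F}=\mathrm{Span}\{\partial_{x^a}\}$ and $G=\mathrm{Span}\{\partial_{y^i}+B_i^a\partial_{x^a}\}$, with dual coframe $\{e^a=dx^a-B_i^a\,dy^i,\ f^i=dy^i\}$, so that $j$ extends by zero on $G$, the $f^i$ span $G^{*}\cong N^{*}\mathcal{F}$, and $p_a$ is the fiber coordinate dual to $e^a$. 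Then $\alpha=\alpha_a e^a$, viewed as a vertical fiberwise constant vector field, is $\widehat{\alpha}=\alpha_a\partial_{p_a}$, and likewise for $\beta$. Since the two Schouten brackets in \eqref{multibracket} are two Lie derivatives, I first record the clean identity $\pounds_{\widehat{\alpha}}\Omega_G=-p^{*}d(j(\alpha))$, immediate from $\Omega_G=p^{*}\omega_C+j^{*}\omega_{can}$ and the fact that $\widehat{\alpha}$ is vertical and fiberwise constant. Differentiating $\Pi_G\,\Omega_G=-\mathrm{Id}$ gives $\pounds_{\widehat{\alpha}}\Pi_G=\Pi_G(\pounds_{\widehat{\alpha}}\Omega_G)\Pi_G$; since $p^{*}d(j(\alpha))$ is pulled back from $C$ it is annihilated by $\pounds_{\widehat{\beta}}$, so the mixed second-derivative term drops and Leibniz yields
\[
\lambda_2(\alpha,\beta)=P\big(\pounds_{\widehat{\beta}}\pounds_{\widehat{\alpha}}\Pi_G\big)=P\Big(\Pi_G\big[p^{*}d(j(\beta))\,\Pi_G\,p^{*}d(j(\alpha))+p^{*}d(j(\alpha))\,\Pi_G\,p^{*}d(j(\beta))\big]\Pi_G\Big)
\]
up to a universal sign from the Schouten conventions. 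This expression is already symmetric in $\alpha,\beta$, matching the graded symmetry of $\lambda_2$.

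The heart of the argument is the evaluation of $\Pi_G$ along the zero section. Inverting $\Omega_G|_C=p^{*}\omega_C-dp_a\wedge e^a$ in the coframe $\{e^a,f^i,dp_a\}$ produces the three identities
\[
\Pi_G^{\sharp}(dp_a)=-\partial_{x^a}\in T\mathcal{F},\qquad \Pi_G^{\sharp}(e^a)=\partial_{p_a}\ (\text{vertical}),\qquad \Pi_G^{\sharp}(f^i)=(\omega_C^{\flat})^{-1}(f^i)\in G\cong N\mathcal{F}.
\]
The third identity is precisely where the transverse Poisson structure $(\omega_C^{\flat})^{-1}$ enters. Recall that $P$ restricts to $C$ and keeps the $\partial_{p_a}\wedge\partial_{p_b}$ part under $\partial_{p_a}\leftrightarrow e^a$. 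Extracting this part of $\Pi_G M\Pi_G$ (with $M$ the bracketed map above) amounts to feeding $dp_b$ through the composition: by the first identity $\Pi_G^{\sharp}(dp_b)=-\partial_{x^b}$ is leafwise, so the two inner forms are contracted with a leafwise vector, and only the pieces that $\Pi_G^{\sharp}$ subsequently routes into $G$ can survive the projection onto vertical directions.

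Concretely, the two-forms $d(j(\alpha))|_C$, $d(j(\beta))|_C$ are pulled back from $C$ and annihilate vertical vectors, so of the covector $\iota_{\partial_{x^b}}d(j(\alpha))$ only its $G^{*}$-part $\iota_{\partial_{x^b}}d_{1,0}\alpha$ matters, routed into $G$ by $(\omega_C^{\flat})^{-1}$. The leafwise part $d_{\mathcal{F}}\alpha$ (the $e\wedge e$-component) drops out, since $\Pi_G^{\sharp}$ sends $e$-covectors to vertical vectors which $d(j(\beta))$ then kills, and the $f\wedge f$-component never appears because $\iota_{\partial_{x^b}}$ annihilates it. Assembling the survivors and using that $\langle(\omega_C^{\flat})^{-1}(\cdot),\cdot\rangle$ is antisymmetric on $N^{*}\mathcal{F}$, the $\partial_{p_a}\wedge\partial_{p_b}$-coefficient reduces (up to the universal sign) to
\[
\lambda_2(\alpha,\beta)(\partial_{x^a},\partial_{x^b})=-\big\langle(\omega_C^{\flat})^{-1}(\iota_{\partial_{x^a}}d_{1,0}\alpha),\,\iota_{\partial_{x^b}}d_{1,0}\beta\big\rangle+\big\langle(\omega_C^{\flat})^{-1}(\iota_{\partial_{x^b}}d_{1,0}\alpha),\,\iota_{\partial_{x^a}}d_{1,0}\beta\big\rangle,
\]
which is exactly the evaluation on $(\partial_{x^a},\partial_{x^b})$ of the right-hand side of \eqref{eq:bin-br}. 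Since both sides are globally defined, checking the identity in an arbitrary adapted chart proves the proposition.

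The main obstacle is bookkeeping rather than conceptual content: one must fix mutually compatible sign conventions for the Schouten bracket, for $\Pi_G=-\Omega_G^{-1}$, for the identification $\partial_{p_a}\leftrightarrow e^a$ defining $P$, and for the ordering in $\tau$, and verify that they conspire into the single overall minus sign in \eqref{eq:bin-br}; once these are pinned down the computation is forced. As a consistency check it is worth recording that $(\mathrm{Id}\otimes(\omega_C^{\flat})^{-1})\tau(d_{1,0}\alpha)=-\Phi(\alpha)$ with $\Phi$ as in Def.~\ref{phi}, which reconnects \eqref{eq:bin-br} with the transverse differentiation map and makes the symmetry in $\alpha,\beta$ transparent.
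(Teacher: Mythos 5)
Your proof is essentially correct, and it takes a genuinely different route from the paper's. The paper stays coordinate-free: using that $\wedge\Pi_G^{\sharp}$ intertwines $[\Pi_G,\cdot\,]$ with $-d$, it converts the derived bracket \eqref{multibracket} into a Koszul bracket, so that $\lambda_2(\alpha,\beta)$ corresponds to the foliated form $-r\big(i^{*}\big[p^{*}(d(j(\alpha))),p^{*}(j(\beta))\big]_{\Pi_G}\big)$, and then proves two claims: the pushforward $Z=\wedge^{2}p_{*}(\Pi_G|_{C})$ lies in $\Gamma(\wedge^{2}G)$ with $Z^{\sharp}=-(\omega_C^{\flat})^{-1}$, and $i^{*}[p^{*}\gamma_1,p^{*}\gamma_2]_{\Pi_G}=[\gamma_1,\gamma_2]_{Z}$; a final bidegree decomposition of $d(j(\alpha))$ finishes the proof. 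You avoid the Koszul bracket altogether: you interpret the two Schouten brackets as Lie derivatives, use the Moser-type identity $\pounds_{\widehat{\alpha}}\Omega_G=-p^{*}d(j(\alpha))$ together with the derivative-of-inverse formula to obtain the explicit expression $\Pi_G\big[p^{*}d(j(\beta))\,\Pi_G\,p^{*}d(j(\alpha))+p^{*}d(j(\alpha))\,\Pi_G\,p^{*}d(j(\beta))\big]\Pi_G$, and then trace covectors through this composition using the block structure of $\Pi_G$ along the zero section (your three identities encode exactly the paper's matrix \eqref{eq:matrixpi}, i.e.\ Claim 1 in the paper's proof). What your route buys is elementariness --- only Cartan calculus and linear algebra, no Koszul-bracket identities --- at the price of a chart-by-chart computation with heavy sign bookkeeping; the paper's route stays global and reuses standard Poisson-geometric machinery. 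I checked that your final display for $\lambda_2(\alpha,\beta)(\partial_{x^a},\partial_{x^b})$ is precisely the evaluation of the right-hand side of \eqref{eq:bin-br} on $(\partial_{x^a},\partial_{x^b})$, so the argument does close.

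One warning about the signs you defer, since the sign is part of the statement. With the convention $\Pi_G=-\Omega_G^{-1}$ (yours and the paper's), your third identity should read $\Pi_G^{\sharp}(f^i)=-(\omega_C^{\flat})^{-1}(f^i)$: along the zero section one has $\Omega_G^{\flat}(Y)=\omega_C^{\flat}(Y)$ for $Y\in G$, and $\Pi_G^{\sharp}\circ\Omega_G^{\flat}=-\text{Id}$; this is the paper's $Z^{\sharp}=-(\omega_C^{\flat})^{-1}$. Since $(\omega_C^{\flat})^{-1}$ enters each of your two terms exactly once, the $+$ sign you wrote would, if carried through literally, flip the overall sign of \eqref{eq:bin-br}. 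Similarly, your closing consistency check has the wrong sign: by Lemma~\ref{lem:des-phi} with $k=1$, one has $\big(\text{Id}\otimes(\omega_C^{\flat})^{-1}\big)(\tau(d_{1,0}\alpha))=+\Phi(\alpha)$, not $-\Phi(\alpha)$. These are bookkeeping slips rather than structural gaps: with the corrected third identity, the computation you outline produces exactly the stated minus sign.
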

	
	In the statement of the proposition, we denoted by $\langle\!\langle\bullet,\bullet\rangle\!\rangle$ the duality pairing
	\begin{equation}\label{eq:pairing}
		\Omega^{k}(\mathcal{F};G)\times\Omega^{l}(\mathcal{F};G^{*})\rightarrow\Omega^{k+l}(\mathcal{F}):\big(\eta\otimes Y,\xi\otimes \gamma\big)\mapsto\langle\gamma,Y\rangle\eta\wedge\xi.
	\end{equation}
	By writing $\tau\circ d_{1,0}$, we used the bi-degree language introduced in \S\ref{sec:appendix_transdiff} of the Appendix.

	\begin{remark}
	The original paper \cite{oh-park} by Oh-Park contains a formula for $\lambda_2$ which looks somewhat similar to \eqref{eq:bin-br}. However, the Oh-Park formula depends not only on the choice of complement $G$ but also on a choice of coordinates, since it involves a non-canonical connection defined in a suitable chart. By contrast, the expression \eqref{eq:bin-br} is global, due to the fact that it uses the operator $d_{1,0}$ instead of the aforementioned local connection. It is easy to check that, when expressed in coordinates, our equation \eqref{eq:bin-br} reduces to the formula given by Oh-Park, hence it provides an invariant description for the latter.  
	\end{remark}

	\begin{proof}[Proof of Prop.~\ref{prop:binary}]
		Using the definition \eqref{multibracket} of $\lambda_2$ and the identification in Lemma \ref{dF} between sections of $T^{*}\mathcal{F}$ and vertical fiberwise constant vector fields on $U$, we have
		\[
		\lambda_2(\alpha,\beta)=P\Big[\Big[\Pi_G,\Pi_G^{\sharp}(p^{*}(j(\alpha)))\Big],\Pi_G^{\sharp}(p^{*}(j(\beta)))\Big].
		\]
		By Rem.~\ref{rem:corres}, the associated foliated two-form is
		\begin{align}\label{eq:twoform}
			&r\left(i^{*}\left(\wedge^{2}\Omega_G^{\flat}\left(P\Big[\Big[\Pi_G,\Pi_G^{\sharp}(p^{*}(j(\alpha)))\Big],\Pi_G^{\sharp}(p^{*}(j(\beta)))\Big]\right)\right)\right)\nonumber\\
			&\hspace{1cm}=r\left(i^{*}\left(\wedge^{2}\Omega_G^{\flat}\left(\Big[\Big[\Pi_G,\Pi_G^{\sharp}(p^{*}(j(\alpha)))\Big],\Pi_G^{\sharp}(p^{*}(j(\beta)))\Big]\right)\right)\right)\nonumber\\
			&\hspace{1cm}=-r\left(i^{*}\left(\wedge^{2}\Omega_G^{\flat}\Big[\wedge^{2}\Pi_G^{\sharp}(p^{*}(d(j(\alpha)))),\Pi_G^{\sharp}(p^{*}(j(\beta)))\Big]\right)\right).
		\end{align}
		In the first equality, we used that $\Omega_G(TC,T\mathcal{F})=0$, and the second equality uses that $\wedge\Pi_G^{\sharp}$ intertwines $[\Pi_G,\bullet]$ and $-d$ (see \cite[Lemma 2.1.3]{libermann}). To simplify the expression \eqref{eq:twoform}, note that for $\gamma_1,\gamma_2,\gamma_3\in\Omega^{1}(U)$ we have
		\begin{equation}\label{eq:id}
			\big[\wedge^{2}\Pi_G^{\sharp}(\gamma_1\wedge\gamma_2),\Pi_G^{\sharp}(\gamma_3)\big]=\wedge^{2}\Pi_{G}^{\sharp}\big[\gamma_1\wedge\gamma_2,\gamma_3\big]_{\Pi_G},
		\end{equation}
		where $[\bullet,\bullet]_{\Pi_G}$ denotes the Koszul bracket associated with $\Pi_G$ (see \cite[\S4.1]{koszul}). The identity \eqref{eq:id} is proved using the derivation rules for the Schouten bracket and the Koszul bracket, along with the fact that $\Pi_G^{\sharp}$ intertwines these brackets when applied to vector fields and one-forms, respectively.
		Consequently, the expression \eqref{eq:twoform} becomes
		\begin{equation*}
			-r\left(i^{*}\big[p^{*}(d(j(\alpha))),p^{*}(j(\beta))\big]_{\Pi_G}\right).
		\end{equation*}
		To simplify this further, we will first study the Poisson structure $\Pi_G$ along the zero section.
		
		\vspace{0.2cm}
		
		\underline{Claim 1:} The bivector field $Z:=\wedge^{2}p_{*}(\Pi_{G}|_{C})\in\Gamma(\wedge^{2}TC)$ actually lies in $\Gamma(\wedge^{2}G)$. It is characterized by the requirement that $Z^{\sharp}:G^{*}\rightarrow G$ equals $-(\omega_{C}^{\flat})^{-1}$.
		
		\vspace{0.1cm}
		\noindent
		We prove the claim, starting from the decomposition $T^{*}\mathcal{F}|_{C}=TC\oplus T^{*}\mathcal{F}=(T\mathcal{F}\oplus G)\oplus T^{*}\mathcal{F}$. As shown in the proof of \cite[Prop. 2.14]{b-coiso}, the symplectic form $\Omega_G$ decomposes as follows at points $x\in C$:
		\[
		\left(\Omega_G\right)_{x}=\begin{blockarray}{cccc}
			& T_{x}\mathcal{F} & G_{x} & T_{x}^{*}\mathcal{F} \\
			\begin{block}{c(ccc)}
				T_{x}\mathcal{F} & 0 & 0 & A \\
				G_{x} & 0 & B & 0 \\
				T_{x}^{*}\mathcal{F} & -A & 0 & 0 \\
			\end{block}
		\end{blockarray}~,
		\]
		for invertible matrices $A,B$. Note that $B^{T}$ represents the isomorphism $\omega_C^{\flat}:G\rightarrow G^{*}$. Consequently, the Poisson structure $\Pi_G$ decomposes as follows at points $x\in C$:
		\begin{equation}\label{eq:matrixpi}
			\left(\Pi_G\right)_{x}=-\left(\Omega_G\right)_{x}^{-1}=\begin{blockarray}{cccc}
				& T_{x}^{*}\mathcal{F} & G_{x}^{*} & T_{x}\mathcal{F} \\
				\begin{block}{c(ccc)}
					T_{x}^{*}\mathcal{F} & 0 & 0 & A^{-1} \\
					G_{x}^{*} & 0 & -B^{-1} & 0 \\
					T_{x}\mathcal{F} & -A^{-1} & 0 & 0 \\
				\end{block}
			\end{blockarray}~.
		\end{equation}
		Since the bivector $Z_{x}$ is represented by the $(2\times2)$ top-left block matrix in \eqref{eq:matrixpi}, this shows that $Z\in\Gamma(\wedge^{2}G)$ and  $Z^{\sharp}=(-B^{-1})^{T}=-(\omega_{C}^{\flat})^{-1}$ as maps $G^{*}\rightarrow G$. The proves the claim.
		
		\vspace{0.2cm}
		
		\underline{Claim 2:} For any $\gamma_1,\gamma_2\in\Omega^{1}(C)$, we have
		\[
		i^{*}[p^{*}\gamma_1,p^{*}\gamma_2]_{\Pi_G}=[\gamma_1,\gamma_2]_{Z}.
		\]
		\vspace{0.1cm}
		\noindent
		To prove Claim 2, we compute using Claim 1:
		\begin{align*}
			i^{*}[p^{*}\gamma_1,p^{*}\gamma_2]_{\Pi_G}&=i^{*}\left(\iota_{\Pi_G^{\sharp}(p^{*}\gamma_1)}p^{*}(d\gamma_2)-\iota_{\Pi_G^{\sharp}(p^{*}\gamma_2)}p^{*}(d\gamma_1)+d\big(\Pi_G(p^{*}\gamma_1,p^{*}\gamma_2)\big)\right)\\
			&=\iota_{Z^{\sharp}(\gamma_1)}d\gamma_2-\iota_{Z^{\sharp}(\gamma_2)}d\gamma_1+d\big(Z(\gamma_1,\gamma_2)\big)\\
			&=[\gamma_1,\gamma_2]_{Z}.
		\end{align*}
		
		\vspace{0.2cm}
		
		We now finish the proof of the proposition, by showing that
		\begin{equation}\label{eq:finalize}
			r\left(i^{*}\big[p^{*}(d(j(\alpha))),p^{*}(j(\beta))\big]_{\Pi_G}\right)=\big\langle\!\big\langle\big(\text{Id}\otimes(\omega_C^{\flat})^{-1}\big)(\tau(d_{1,0}\alpha)),\tau(d_{1,0}\beta)\big\rangle\!\big\rangle.
		\end{equation}
		Note that both sides are linear in $d(j(\alpha))$. This is clear for the left hand side, and for the right hand side this follows from the fact that $d_{1,0}\alpha$ is just the component of $d(j(\alpha))$ lying in $\Gamma(G^{*}\otimes T^{*}\mathcal{F})$. Hence, to prove the equality \eqref{eq:finalize}, we may assume that
		\begin{equation}\label{eq:assumption}
			d(j(\alpha))=\xi_1\wedge\xi_2+\xi\wedge\eta+\eta_1\wedge\eta_2\in\Gamma(\wedge^{2}T^{*}\mathcal{F})\oplus\Gamma(T^{*}\mathcal{F}\otimes G^{*})\oplus\Gamma(\wedge^{2}G^{*}).
		\end{equation}
		Using Claim 1 and Claim 2, the left hand side in \eqref{eq:finalize} is given by
		\begin{align*}
			&\hspace{0.7cm}r\left(i^{*}\big[p^{*}\xi_1\wedge p^{*}\xi_2+p^{*}\xi\wedge p^{*}\eta+p^{*}\eta_1\wedge p^{*}\eta_2,p^{*}(j(\beta))\big]_{\Pi_G}\right)\\
			&=r\left(i^{*}\left(p^{*}\xi_1\wedge\big[p^{*}\xi_2,p^{*}(j(\beta))\big]_{\Pi_G}\hspace{-0.15cm}+\big[p^{*}\xi_1,p^{*}(j(\beta))\big]_{\Pi_G}\hspace{-0.15cm}\wedge p^{*}\xi_2+p^{*}\xi\wedge\big[p^{*}\eta,p^{*}(j(\beta))\big]_{\Pi_G}\right)\right)\\
			&=\xi_1\wedge r\big([\xi_2,j(\beta)]_{Z}\big)+r\big([\xi_1,j(\beta)]_{Z}\big)\wedge\xi_2+\xi\wedge r\big([\eta,j(\beta)]_{Z}\big)\\
			&=\xi\wedge r\left(\iota_{Z^{\sharp}(\eta)}d(j(\beta))\right).
		\end{align*}
		On the other hand, from \eqref{eq:assumption} it immediately follows that
		\[
		\big(\text{Id}\otimes(\omega_C^{\flat})^{-1}\big)(\tau(d_{1,0}\alpha))=-\big(\text{Id}\otimes(\omega_C^{\flat})^{-1}\big)(\xi\wedge\eta)
		=-\xi\otimes(\omega_{C}^{\flat})^{-1}(\eta)
		=\xi\otimes Z^{\sharp}(\eta),
		\]
		hence the right hand side in \eqref{eq:finalize} is given by
		\begin{align*}
			\big\langle\!\big\langle\big(\text{Id}\otimes(\omega_C^{\flat})^{-1}\big)(\tau(d_{1,0}\alpha)),\tau(d_{1,0}\beta)\big\rangle\!\big\rangle&=\xi\wedge\big\langle\!\big\langle Z^{\sharp}(\eta),\tau(d_{1,0}\beta)\big\rangle\!\big\rangle\\
			&=\xi\wedge r\left(\iota_{Z^{\sharp}(\eta)}d(j(\beta))\right).
		\end{align*}
		This shows that the equality \eqref{eq:finalize} holds, and this in turn proves the proposition.
	\end{proof}
	
	\begin{remark}
		If the complement $G$ is involutive, then the proof of Prop. \ref{prop:binary} becomes more geometric. In that case, we have a symplectic foliation $(G,\omega_C)$ defining a Poisson structure $\Pi_{base}$ on $C$, and the projection $p:(U,\Pi_G)\rightarrow (C,\Pi_{base})$ is a Poisson map. This immediately implies Claim 1 in the proof above.
		It also implies Claim 2, because of the following. Recall that the since $p$ is a Poisson map, the pullback bundle  $p^{*}T^{*}C$ carries a natural Lie algebroid structure $(\rho,[\bullet,\bullet]_{p^{*}})$ determined by
		\[
		[p^{*}\gamma_1,p^{*}\gamma_2]_{p^{*}}=p^{*}[\gamma_1,\gamma_2]_{\Pi_{base}},\hspace{0.5cm}\rho(p^{*}\gamma)=\Pi_{G}^{\sharp}(p^{*}\gamma).
		\]
		When $p^{*}T^{*}C$ is endowed with this Lie algebroid structure and $T^{*}U$ carries its usual cotangent Lie algebroid structure $(\Pi_G^{\sharp},[\bullet,\bullet]_{\Pi_G})$, the natural map $p^{*}T^{*}C\rightarrow T^{*}U$ is a Lie algebroid morphism \cite[Prop. 1.10]{modular}. But since $p:U\rightarrow C$ is a submersion, the map $p^{*}T^{*}C\rightarrow T^{*}U$ is injective, hence $(p^{*}T^{*}C,\rho,[\bullet,\bullet]_{p^{*}})$ becomes a Lie subalgebroid of $(T^{*}U,\Pi_G^{\sharp},[\bullet,\bullet]_{\Pi_G})$. Compatibility of their Lie brackets implies Claim 2 in the proof above.	
	\end{remark}
	
	At last, we pass to cohomology. The pairing \eqref{eq:pairing} descends to 
	\[
	\langle\!\langle\bullet,\bullet\rangle\!\rangle:H^{k}(\mathcal{F};N\mathcal{F})\times H^{l}(\mathcal{F};N^{*}\mathcal{F})\rightarrow H^{k+l}(\mathcal{F}),
	\]
	the map $(-1)^{\bullet}(\tau\circ d_{1,0})$ descends to $d_{\nu}$ by Cor.~\ref{cor:transversediff}, and we have an isomorphism
	\[
	[\text{Id}\otimes(\omega_{C}^{\flat})^{-1}]:H^{\bullet}(\mathcal{F};N^{*}\mathcal{F})\rightarrow H^{\bullet}(\mathcal{F};N\mathcal{F}),
	\]
	see \S\ref{subsub:fol}. Hence, Prop.~\ref{prop:binary} immediately implies the following.
	
	\begin{cor}\label{cor:kur}
		The Kuranishi map of $\big(\Gamma(\wedge T^{*}\mathcal{F})[1],\{\lambda_k\}\big)$ is given canonically by
		\[
		Kr:H^{1}(\mathcal{F})\rightarrow H^{2}(\mathcal{F}):[\beta]\mapsto -\Big\langle\!\Big\langle \left(\big[\text{Id}\otimes(\omega_{C}^{\flat})^{-1}\big]\circ d_{\nu}\right)[\beta],d_{\nu}[\beta] \Big\rangle\!\Big\rangle.
		\]
		In particular, $\ker(d_{\nu})$ is contained in $\ker(Kr)$.
	\end{cor}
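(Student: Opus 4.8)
The plan is to derive the corollary from Prop.~\ref{prop:binary} purely by passing to cohomology, so that no new geometric input is required. First I would recall that by definition the Kuranishi map is $Kr[\beta]=[\lambda_2(\beta,\beta)]$, which is well-defined because $\lambda_2$ descends to cohomology. Hence it suffices to compute the class of the chain-level expression for $\lambda_2(\beta,\beta)$ supplied by Prop.~\ref{prop:binary}, namely
\[
\lambda_2(\beta,\beta)=-\big\langle\!\big\langle\big(\text{Id}\otimes(\omega_C^{\flat})^{-1}\big)(\tau(d_{1,0}\beta)),\tau(d_{1,0}\beta)\big\rangle\!\big\rangle.
\]

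The three facts recorded just before the statement then do all the work: the pairing $\langle\!\langle\bullet,\bullet\rangle\!\rangle$ descends to cohomology, the bundle map $\text{Id}\otimes(\omega_{C}^{\flat})^{-1}$ induces the isomorphism $[\text{Id}\otimes(\omega_{C}^{\flat})^{-1}]$ in cohomology, and $(-1)^{\bullet}(\tau\circ d_{1,0})$ descends to $d_{\nu}$ by Cor.~\ref{cor:transversediff}. Applying the last fact in degree one, where $\beta$ lives, gives $[\tau(d_{1,0}\beta)]=-d_{\nu}[\beta]$ in $H^{1}(\mathcal{F};N^{*}\mathcal{F})$. Substituting this into both slots of the pairing and extracting the scalars by bilinearity, the two factors of $-1$ multiply to $+1$, and I obtain
\[
Kr[\beta]=-\Big\langle\!\Big\langle \big[\text{Id}\otimes(\omega_{C}^{\flat})^{-1}\big] d_{\nu}[\beta],\,d_{\nu}[\beta] \Big\rangle\!\Big\rangle,
\]
which is precisely the asserted formula.

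The only step requiring genuine care is this sign bookkeeping: one must check that the degree-dependent sign relating $\tau\circ d_{1,0}$ to $d_{\nu}$ cancels between the two arguments of the pairing, rather than surviving as a spurious overall sign. Since $\beta$ contributes with the same degree in each slot, both arguments carry the identical factor $-1$, so the cancellation is immediate; nevertheless this is the point where an error would most easily slip in, so I would state it explicitly. Finally, the ``in particular'' clause is a one-line consequence of the formula: if $[\beta]\in\ker(d_{\nu})$ then $d_{\nu}[\beta]=0$, whence both entries of the pairing vanish and $Kr[\beta]=0$. This yields $\ker(d_{\nu})\subseteq\ker(Kr)$, confirming at the level of the Kuranishi obstruction the partial unobstructedness established in Cor.~\ref{cor:partialunobs}.
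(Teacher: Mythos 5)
Your proposal is correct and follows essentially the same route as the paper: the paper likewise deduces the corollary directly from Prop.~\ref{prop:binary} by invoking the three descent facts (the pairing, the isomorphism $\big[\text{Id}\otimes(\omega_{C}^{\flat})^{-1}\big]$, and Cor.~\ref{cor:transversediff} identifying $(-1)^{\bullet}(\tau\circ d_{1,0})$ with $d_{\nu}$ in cohomology). Your explicit check that the degree-one sign $[\tau(d_{1,0}\beta)]=-d_{\nu}[\beta]$ cancels between the two slots of the bilinear pairing is exactly the bookkeeping the paper leaves implicit in its ``immediately implies'' step.
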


	\section{Appendix}
	
	\subsection{Proofs for results in \S\ref{subsubsec:canonical}}\label{sec:appendix_transdiff}
	\vspace{0.1cm}
	\noindent
	
	This subsection is devoted to the proofs of Prop.~\ref{prop:rephrase-canonical} and Lemma~\ref{lem:extension}. Both proofs will use some ingredients of the spectral sequence of the foliation $\mathcal{F}$, which we introduce now. 
	
	Recall that we fixed a complement $G$ to the characteristic distribution $T\mathcal{F}$ on $C$. 
	The decomposition $TC=T\mathcal{F}\oplus G$ induces a bi-grading on $\Omega(C)$, namely
	\[
	\Omega^{k}(C)=\bigoplus_{u+v=k}\Omega^{u,v}(C),
	\]
	where
	\[
	\Omega^{u,v}(C):=\Gamma(\wedge^{u}G^{*}\otimes\wedge^{v}T^{*}\mathcal{F}).
	\]
	With respect to this bi-grading, the de Rham differential splits into a sum of bihomogeneous components 
	\begin{equation}\label{eq:components}
		d=d_{0,1}+d_{1,0}+d_{2,-1},
	\end{equation}
	where the subscript $(i,j)$ indicates the bi-degree of the component $d_{i,j}$. Note that $d_{2,-1}$ vanishes exactly when the complement $G$ is involutive. We will use the following explicit formulae for $d_{0,1}$ and $d_{1,0}$, which can be found in \cite[Chapter 4]{tondeur}. If $\omega\in\Omega^{u,v}(C)$, then 
	
	\begin{align*}
		(d_{0,1}\omega)(Y_1,\ldots,Y_u;V_1,\ldots,V_{v+1})&=\sum_{i=1}^{v+1}(-1)^{u+i+1}V_i\big(\omega(Y_1,\ldots,Y_u;V_1,\ldots,\widehat{V_i},\ldots,V_{v+1})\big)\\
		&\hspace{-2.5cm}+\sum_{1\leq i<j\leq v+1}(-1)^{i+j+u}\omega(Y_1,\ldots,Y_u;[V_i,V_j],V_1,\ldots,\widehat{V_i},\ldots,\widehat{V_j},\ldots,V_{k+1})\\
		&\hspace{-2.5cm}+\sum_{\alpha=1}^{u}\sum_{j=1}^{v+1}(-1)^{u+\alpha+j}\omega\big(\text{pr}_{G}[Y_{\alpha},V_j],Y_1,\ldots,\widehat{Y_{\alpha}},\ldots,Y_u;V_1,\ldots,\widehat{V_j},\ldots,V_{v+1}\big)
	\end{align*}
	and
	\begin{align*}
		(d_{1,0}\omega)(Y_1,\ldots,Y_{u+1};V_1,\ldots,V_{v})&=\sum_{\alpha=1}^{u+1}(-1)^{\alpha+1}Y_{\alpha}\big(\omega(Y_1,\ldots,\widehat{Y_{\alpha}},\ldots,Y_{u+1};V_1,\ldots,V_v)\big)\\
		&\hspace{-2.5cm}+\sum_{1\leq\alpha<\beta\leq u+1}(-1)^{\alpha+\beta}\omega\big(\text{pr}_{G}[Y_{\alpha},Y_{\beta}],Y_1,\ldots,\widehat{Y_{\alpha}},\ldots,\widehat{Y_{\beta}},\ldots,Y_{u+1};V_1,\ldots,V_v\big)\\
		&\hspace{-2.5cm}+\sum_{\alpha=1}^{u+1}\sum_{j=1}^{v}(-1)^{\alpha+j+1}\omega\big(Y_1,\ldots,\widehat{Y_{\alpha}},\ldots,Y_{u+1};\text{pr}_{T\mathcal{F}}[Y_{\alpha},V_j],V_1,\ldots,\widehat{V_j},\ldots,V_v\big).
	\end{align*}

	\subsubsection{The proof of Prop.~\ref{prop:rephrase-canonical}}\label{subsubsec:can}
	
	To prove Prop.~\ref{prop:rephrase-canonical}, we rewrite the map $\Phi$ from Def.~\ref{phi} in terms of the bihomogeneous component
	$
	d_{1,0}:\Gamma(\wedge^{\bullet}T^{*}\mathcal{F})\rightarrow\Gamma(G^{*}\otimes\wedge^{\bullet}T^{*}\mathcal{F}).
	$
	More precisely, we have that
	\[
	\left\langle \Phi(\alpha)(V_{1},\ldots,V_{k}),\beta\right\rangle=(-1)^{k}(d_{1,0}\alpha)\big((\omega_C^{\flat})^{-1}(\beta),V_1,\ldots,V_k\big)
	\]
	for $\alpha\in\Gamma(\wedge^{k}T^{*}\mathcal{F}),\beta\in\Gamma(G^{*})$ and $V_1,\ldots,V_k\in\Gamma(T\mathcal{F})$. If we use the obvious identification
	\[
	\tau:\Gamma(G^{*}\otimes\wedge^{k}T^{*}\mathcal{F})\rightarrow\Gamma(\wedge^{k}T^{*}\mathcal{F}\otimes G^{*})
	\] 
	determined by
	\[
	\langle \tau(\eta)(V_1,\ldots,V_k),Y\rangle=\langle\eta(Y),V_1\wedge\cdots\wedge V_k\rangle,
	\]
	then we obtain the following description of the map $\Phi$.
	
	\begin{lemma}\label{lem:des-phi}
		The map $\Phi:\Gamma(\wedge^{k}T^{*}\mathcal{F})\rightarrow\Gamma(\wedge^{k}T^{*}\mathcal{F}\otimes G)$ is given by
	\begin{equation}\label{eq:expressphi}
		\Phi(\alpha)=(-1)^{k+1}\left(\text{Id}\otimes(\omega_C^{\flat})^{-1}\right)\big(\tau(d_{1,0}\alpha)\big),\hspace{1cm}\alpha\in\Gamma(\wedge^{k}T^{*}\mathcal{F}).
	\end{equation}

\end{lemma}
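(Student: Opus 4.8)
The plan is to unwind the definition of $\Phi$ from Def.~\ref{phi} and match it, slot by slot, against the bihomogeneous decomposition \eqref{eq:components} of the de Rham differential. First I would start from the defining relation $\langle\Phi(\alpha)(V_1,\ldots,V_k),\beta\rangle=d(j(\alpha))(V_1,\ldots,V_k,(\omega_{C}^{\flat})^{-1}(\beta))$ and note that, since $j(\alpha)\in\Omega^{0,k}(C)$, the three summands $d_{0,1}(j(\alpha))$, $d_{1,0}(j(\alpha))$, $d_{2,-1}(j(\alpha))$ have bi-degrees $(0,k+1)$, $(1,k)$, $(2,k-1)$ respectively. The arguments consist of $k$ vectors tangent to $\mathcal{F}$ together with the single transverse vector $(\omega_{C}^{\flat})^{-1}(\beta)\in\Gamma(G)$, so the $(0,k+1)$-component dies (it has no $G^*$-slot to absorb the transverse vector) and the $(2,k-1)$-component dies (it needs two transverse arguments, while only one is present). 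Hence only $d_{1,0}(j(\alpha))$ contributes, and rewriting it in the $\Omega^{u,v}$-convention with the $G$-argument placed first costs a sign $(-1)^k$ from commuting that argument past the $k$ tangent ones. This yields the intermediate identity stated just before the lemma.

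The second step is purely formal: applying the definition of the flip $\tau$ turns $(d_{1,0}\alpha)((\omega_{C}^{\flat})^{-1}(\beta),V_1,\ldots,V_k)$ into the pairing $\langle\tau(d_{1,0}\alpha)(V_1,\ldots,V_k),(\omega_{C}^{\flat})^{-1}(\beta)\rangle$, so that $\langle\Phi(\alpha)(V_1,\ldots,V_k),\beta\rangle=(-1)^k\langle\tau(d_{1,0}\alpha)(V_1,\ldots,V_k),(\omega_{C}^{\flat})^{-1}(\beta)\rangle$. The final step is to move $(\omega_{C}^{\flat})^{-1}$ off the argument $\beta$ and onto the other slot. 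For this I would record that $\omega_{C}^{\flat}\colon G\to G^*$ is skew-adjoint, a direct consequence of the antisymmetry of $\omega_C$: writing $X=(\omega_{C}^{\flat})^{-1}(\gamma)$ and $Y=(\omega_{C}^{\flat})^{-1}(\delta)$ gives $\langle\gamma,(\omega_{C}^{\flat})^{-1}(\delta)\rangle=\omega_C(X,Y)=-\omega_C(Y,X)=-\langle\delta,(\omega_{C}^{\flat})^{-1}(\gamma)\rangle$. Applying this with $\gamma=\tau(d_{1,0}\alpha)(V_1,\ldots,V_k)$ and $\delta=\beta$ flips the pairing at the cost of one more sign, producing $(-1)^{k+1}\langle(\omega_{C}^{\flat})^{-1}(\tau(d_{1,0}\alpha)(V_1,\ldots,V_k)),\beta\rangle$. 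Since $\beta\in\Gamma(G^*)$ is arbitrary, the two sides agree as elements of $\Gamma(G)$, and recognizing the right-hand side as $\big(\text{Id}\otimes(\omega_{C}^{\flat})^{-1}\big)(\tau(d_{1,0}\alpha))$ evaluated on $(V_1,\ldots,V_k)$ gives exactly \eqref{eq:expressphi}.

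The only real subtlety, and the step I expect to demand the most care, is the sign bookkeeping: there are three independent sources of signs — the reordering sign $(-1)^k$ coming from the $\Omega^{u,v}$-convention, the skew-adjointness sign $-1$ from $\omega_{C}^{\flat}$, and the sign $-1$ already built into the target isomorphism $-\text{Id}\otimes(\omega_{C}^{\flat})^{-1}$ in Def.~\ref{phi}. These must be tracked consistently so that they combine to the stated $(-1)^{k+1}$; in particular the informal ``pick the middle component'' description of $\Phi$ has to be reconciled with the precise $\tau$-identification, which is where the degree-dependent sign enters. Everything else is a routine unwinding of the explicit formula for $d_{1,0}$ and of the definitions of $j$, $\tau$, and $\omega_{C}^{\flat}$.
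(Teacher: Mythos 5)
Your proof is correct and takes essentially the same route as the paper: the paper simply asserts the intermediate identity $\left\langle \Phi(\alpha)(V_{1},\ldots,V_{k}),\beta\right\rangle=(-1)^{k}(d_{1,0}\alpha)\big((\omega_C^{\flat})^{-1}(\beta),V_1,\ldots,V_k\big)$ and then applies $\tau$, and you supply exactly the details it leaves implicit --- the vanishing of the $(0,k+1)$ and $(2,k-1)$ components, the reordering sign $(-1)^{k}$, and the skew-adjointness flip moving $(\omega_C^{\flat})^{-1}$ across the pairing. One small correction to your closing remark: only two signs genuinely enter, namely $(-1)^{k}$ from reordering and $-1$ from skew-adjointness of $\omega_{C}^{\flat}$, since the minus built into the informal description ``$-\text{Id}\otimes(\omega_{C}^{\flat})^{-1}$'' following Def.~\ref{phi} \emph{is} the skew-adjointness sign rather than a third independent one (three multiplicative signs would yield $(-1)^{k}$, not the stated $(-1)^{k+1}$); your actual computation uses it only once and is therefore right.
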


It follows that, in order to study $\Phi$, one only has to understand the map
	\[
	\tau\circ d_{1,0}:\Gamma(\wedge^{\bullet}T^{*}\mathcal{F})\rightarrow\Gamma(\wedge^{\bullet}T^{*}\mathcal{F}\otimes G^{*}).
	\]
We now show that this map intertwines the differentials $d_{\mathcal{F}}$ and $d_{\nabla^{*}}$.
	
	
	\begin{lemma}\label{lem:chainmap}
		We have a chain map
		\[
		\tau\circ d_{1,0}:\big(\Omega^{\bullet}(\mathcal{F}),d_{\mathcal{F}}\big)\rightarrow\big(\Omega^{\bullet}(\mathcal{F};G^{*}),d_{\nabla^{*}}\big).
		\]
	\end{lemma}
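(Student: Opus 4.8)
The plan is to deduce the chain-map property from two ingredients: the relation among bihomogeneous components of $d$ forced by $d^{2}=0$, and an identification of $d_{0,1}$ on the $(1,\bullet)$-part of $\Omega(C)$ with the Bott-connection differential $d_{\nabla^{*}}$ via the swap map $\tau$. Throughout I identify $\Omega^{0,\bullet}(C)=\Omega^{\bullet}(\mathcal{F})$, under which $d_{0,1}$ is exactly $d_{\mathcal{F}}$, and I use the identification $G^{*}\cong N^{*}\mathcal{F}$ coming from the splitting $T^{*}C=(T\mathcal{F})^{0}\oplus G^{0}$, so that $\Omega^{\bullet}(\mathcal{F};G^{*})=\Omega^{\bullet}(\mathcal{F};N^{*}\mathcal{F})$ and $d_{\nabla^{*}}$ is the differential for the dual Bott connection on $G^{*}$.

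First I would record that, since $d=d_{0,1}+d_{1,0}+d_{2,-1}$ in \eqref{eq:components}, expanding $d^{2}=0$ and collecting the bidegree $(1,1)$ part yields the anticommutation relation $d_{0,1}\circ d_{1,0}+d_{1,0}\circ d_{0,1}=0$. The second, and main, ingredient will be the claim that, on $\Omega^{1,\bullet}(C)=\Gamma(G^{*}\otimes\wedge^{\bullet}T^{*}\mathcal{F})$, one has $\tau\circ d_{0,1}=-d_{\nabla^{*}}\circ\tau$. Granting this, the computation is immediate: for $\alpha\in\Omega^{k}(\mathcal{F})$,
\[
\tau\big(d_{1,0}(d_{\mathcal{F}}\alpha)\big)=\tau\big(d_{1,0}(d_{0,1}\alpha)\big)=-\tau\big(d_{0,1}(d_{1,0}\alpha)\big)=d_{\nabla^{*}}\big(\tau(d_{1,0}\alpha)\big),
\]
where the middle equality is the anticommutation relation and the last uses the claim applied to $d_{1,0}\alpha\in\Omega^{1,k}(C)$. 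This is precisely the statement that $\tau\circ d_{1,0}$ is a chain map, so everything reduces to the claim.

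The hard part will be verifying this claim, which I would do by comparing the explicit formula for $d_{0,1}$ (with $u=1$) given in the excerpt against the Koszul formula for $d_{\nabla^{*}}$. Setting $\sigma=\tau(\eta)$ for $\eta\in\Omega^{1,v}(C)$, the defining relation of $\tau$ reads $\langle\sigma(V_{1},\ldots,V_{v}),Y\rangle=\eta(Y;V_{1},\ldots,V_{v})$ for $Y\in\Gamma(G)$. The key algebraic input is that the Bott connection on $G\cong N\mathcal{F}$ satisfies $\nabla_{V}Y=\mathrm{pr}_{G}[V,Y]=-\mathrm{pr}_{G}[Y,V]$, so the ``$\mathrm{pr}_{G}[Y,V_{j}]$'' terms appearing in the third line of the $d_{0,1}$-formula are exactly the terms produced by the Leibniz rule $\langle\nabla^{*}_{V}\psi,Y\rangle=V\langle\psi,Y\rangle-\langle\psi,\nabla_{V}Y\rangle$ when one expands $\langle d_{\nabla^{*}}\sigma,Y\rangle$. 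Pairing $d_{0,1}\eta$ and $d_{\nabla^{*}}(\tau\eta)$ with an arbitrary $Y\in\Gamma(G)$ and matching the resulting expressions term by term (leafwise-derivative terms, Lie-bracket terms, and connection terms) shows that they agree up to an overall sign, yielding $\langle\tau(d_{0,1}\eta),Y\rangle=-\langle d_{\nabla^{*}}(\tau\eta),Y\rangle$ and hence the claim.

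The only genuine difficulty is sign-bookkeeping: the shift $u=1$ introduces factors $(-1)^{u+i+1}$ in the $d_{0,1}$-formula, and the swap $\tau$ together with the Leibniz rule contributes further signs, so the term-by-term comparison must be carried out carefully to confirm that the discrepancy is a uniform factor of $-1$ (rather than a bidegree-dependent sign). Once that bookkeeping is done, the global sign $c=-1$ combines with the anticommutation relation $d_{1,0}d_{0,1}=-d_{0,1}d_{1,0}$ to produce an honest chain map with no residual sign, as displayed above.
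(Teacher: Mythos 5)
Your proposal is correct and follows essentially the same route as the paper's proof: identify $\big(\Omega^{\bullet}(\mathcal{F}),d_{\mathcal{F}}\big)$ with $\big(\Omega^{0,\bullet}(C),d_{0,1}\big)$, prove that $\tau$ intertwines $d_{0,1}$ on $\Omega^{1,\bullet}(C)$ with $d_{\nabla^{*}}$ up to a uniform sign $-1$ by a term-by-term comparison of the explicit formula for $d_{0,1}$ with the Koszul formula for $d_{\nabla^{*}}$, and then combine this with the anticommutation relation $d_{0,1}d_{1,0}+d_{1,0}d_{0,1}=0$ extracted from $d^{2}=0$. The sign bookkeeping you defer is exactly the content of the paper's computation, and it does come out as a uniform factor of $-1$, so your argument closes as claimed.
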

	\begin{proof}
		We start by expressing the complexes $\big(\Omega^{\bullet}(\mathcal{F}),d_{\mathcal{F}}\big)$ and $\big(\Omega^{\bullet}(\mathcal{F};G^{*}),d_{\nabla^{*}}\big)$ in the bi-degree language introduced above. We claim that the following hold.
		
		\noindent
		\begin{flalign*}
			\underline{\text{Claim:}}
			\ &i)\  \big(\Omega^{\bullet}(\mathcal{F}),d_{\mathcal{F}}\big)=\big(\Omega^{0,\bullet}(C),d_{0,1}\big),&\\
			&ii)\ \tau:\big(\Omega^{1,\bullet}(C),d_{0,1}\big)\overset{\sim}{\rightarrow}\big(\Omega^{\bullet}(\mathcal{F};G^{*}),d_{\nabla^{*}}\big)\ \text{intertwines differentials, up to sign.}
		\end{flalign*}
		
		\noindent
		Since $i)$ is clear, we only justify $ii)$. Pick $\beta\in\Omega^{1,k}(C)$, and let $V_1,\ldots,V_{k+1}\in\Gamma(T\mathcal{F})$ and $Y\in\Gamma(G)$. On one hand, we have
		\begin{align*}
			\big\langle\tau(d_{0,1}\beta)(V_1,\ldots,V_{k+1}),Y\big\rangle&=(d_{0,1}\beta)(Y,V_1,\ldots,V_{k+1})\\
			&=\sum_{i=1}^{k+1}(-1)^{i}V_i\big(\beta(Y,V_1,\ldots,\widehat{V_i},\ldots,V_{k+1})\big)\\
			&\hspace{0.5cm}+\sum_{1\leq i<j\leq k+1}(-1)^{i+j+1}\beta(Y,[V_i,V_j],V_1,\ldots,\widehat{V_i},\ldots,\widehat{V_j},\ldots,V_{k+1})\\
			&\hspace{0.5cm}+\sum_{j=1}^{k+1}(-1)^{j}\beta\big(\text{pr}_{G}[Y,V_j],V_1,\ldots,\widehat{V_j},\ldots,V_{k+1}\big).
		\end{align*}
		On the other hand,
		\begin{align*}
			\big\langle d_{\nabla^{*}}(\tau(\beta))(V_1,\ldots,V_{k+1}),Y \big\rangle&=\sum_{i=1}^{k+1}(-1)^{i+1}\big\langle\nabla^{*}_{V_i}\tau(\beta)(V_1,\ldots,\widehat{V_i},\ldots,V_{k+1}),Y \big\rangle\\
			&+\sum_{1\leq i<j\leq k+1}(-1)^{i+j}\big\langle\tau(\beta)\big([V_i,V_j],V_1,\ldots,\widehat{V_i},\ldots,\widehat{V_j},\ldots,V_{k+1}\big),Y \big\rangle.
		\end{align*}
		Here
		\begin{align*}
			\big\langle\nabla^{*}_{V_i}\tau(\beta)(V_1,\ldots,\widehat{V_i},\ldots,V_{k+1}),Y \big\rangle&=V_i\big\langle \tau(\beta)(V_1,\ldots,\widehat{V_i},\ldots,V_{k+1}),Y\big\rangle\\
			&\hspace{0.5cm}-\big\langle\tau(\beta)(V_1,\ldots,\widehat{V_i},\ldots,V_{k+1}),\text{pr}_{G}[V_i,Y]\big\rangle\\
			&=V_i\big(\beta(Y,V_1,\ldots,\widehat{V_i},\ldots,V_{k+1})\big)\\
			&\hspace{0.5cm}-\beta\big(\text{pr}_{G}[V_i,Y],V_1,\ldots,\widehat{V_i},\ldots,V_{k+1}\big)
		\end{align*}
		and
		\[
		\big\langle\tau(\beta)\big([V_i,V_j],V_1,\ldots,\widehat{V_i},\ldots,\widehat{V_j},\ldots,V_{k+1}\big),Y \big\rangle=\beta\big(Y,[V_i,V_j],V_1,\ldots,\widehat{V_i},\ldots,\widehat{V_j},\ldots,V_{k+1}\big).
		\]
		Upon comparison, we see that $\tau(d_{0,1}\beta)=-d_{\nabla^{*}}(\tau(\beta))$, hence the claim is proved.
		
		Next, we relate the bihomogeneous components of the de Rham differential $d$ with each other. By the decomposition \eqref{eq:components}, we have that $d^{2}$ maps $\Omega^{u,v}(C)$ into
		\[
		\Omega^{u,v+2}(C)\oplus\Omega^{u+1,v+1}(C)\oplus\Omega^{u+2,v}(C)\oplus\Omega^{u+3,v-1}(C)\oplus\Omega^{u+4,v-2}(C).
		\]
		Because each component of $d^{2}$ must be zero, we obtain
		\begin{equation}\label{eq:relations}
			\begin{cases}
				d_{0,1}d_{0,1}=0\\
				d_{0,1}d_{1,0}+d_{1,0}d_{0,1}=0\\
				d_{0,1}d_{2,-1}+d_{1,0}d_{1,0}+d_{2,-1}d_{0,1}=0\\
				d_{1,0}d_{2,-1}+d_{2,-1}d_{1,0}=0\\
				d_{2,-1}d_{2,-1}=0
			\end{cases}.
		\end{equation}
		
		At last, we prove the statement of the lemma. Using the second relation in \eqref{eq:relations} and $ii)$ of the claim above, we have for $\alpha\in\Gamma(\wedge^{\bullet}T^{*}\mathcal{F})$ that
		\[
		(\tau\circ d_{1,0})(d_{0,1}\alpha)=-(\tau\circ d_{0,1})( d_{1,0}\alpha)=d_{\nabla^{*}}((\tau\circ d_{1,0})(\alpha)).
		\]
		Since by $i)$ of the claim above, the restriction of $d_{0,1}$ to $\Gamma(\wedge^{\bullet}T^{*}\mathcal{F})$ coincides with the foliated de Rham differential $d_{\mathcal{F}}$, this finishes the proof.
	\end{proof}
	
	Item $i)$ of Prop.~\ref{prop:rephrase-canonical} is now immediate.
	
	\begin{cor}
	The map
	\[
	\Phi:\big(\Omega^{\bullet}(\mathcal{F}),d_{\mathcal{F}}\big)\rightarrow\big(\Omega^{\bullet}(\mathcal{F};G),d_{\nabla}\big)
	\]
	is a chain map, up to sign.
	\end{cor}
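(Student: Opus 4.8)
The plan is to read the corollary off directly from the factorization of $\Phi$ recorded in Lemma~\ref{lem:des-phi} together with the chain-map property established in Lemma~\ref{lem:chainmap}. By Lemma~\ref{lem:des-phi}, on $\Omega^{k}(\mathcal{F})$ the map is
\[
\Phi=(-1)^{k+1}\big(\text{Id}\otimes(\omega_C^{\flat})^{-1}\big)\circ\tau\circ d_{1,0},
\]
so that, up to the degree-dependent sign $(-1)^{k+1}$, the map $\Phi$ is nothing but the composite of $\tau\circ d_{1,0}$ with the bundle isomorphism $\text{Id}\otimes(\omega_C^{\flat})^{-1}$. The whole argument is therefore a matter of assembling two facts that are already available and then tracking a sign.

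First I would record the two compatibilities with differentials. By Lemma~\ref{lem:chainmap}, the map $\tau\circ d_{1,0}\colon\big(\Omega^{\bullet}(\mathcal{F}),d_{\mathcal{F}}\big)\to\big(\Omega^{\bullet}(\mathcal{F};G^{*}),d_{\nabla^{*}}\big)$ is an honest chain map, i.e. $(\tau\circ d_{1,0})\circ d_{\mathcal{F}}=d_{\nabla^{*}}\circ(\tau\circ d_{1,0})$. Next, under the identifications $G\cong N\mathcal{F}$ and $G^{*}\cong N^{*}\mathcal{F}$, the map $\text{Id}\otimes(\omega_C^{\flat})^{-1}\colon\big(\Omega^{\bullet}(\mathcal{F};G^{*}),d_{\nabla^{*}}\big)\to\big(\Omega^{\bullet}(\mathcal{F};G),d_{\nabla}\big)$ is an isomorphism of complexes, as noted in \S\ref{subsub:fol} (this is the compatibility of $\omega_C^{\flat}$ with the flat connections $\nabla$ and $\nabla^{*}$). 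Hence the composite $\Psi:=\big(\text{Id}\otimes(\omega_C^{\flat})^{-1}\big)\circ\tau\circ d_{1,0}$ commutes strictly with the differentials, $\Psi\circ d_{\mathcal{F}}=d_{\nabla}\circ\Psi$.

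It then remains to track the sign produced by the prefactor $(-1)^{k+1}$. For $\alpha\in\Omega^{k}(\mathcal{F})$ one has $\Phi(\alpha)=(-1)^{k+1}\Psi(\alpha)$, while $d_{\mathcal{F}}\alpha\in\Omega^{k+1}(\mathcal{F})$ gives
\[
\Phi(d_{\mathcal{F}}\alpha)=(-1)^{k+2}\Psi(d_{\mathcal{F}}\alpha)=(-1)^{k+2}\,d_{\nabla}\big(\Psi(\alpha)\big),
\]
whereas $d_{\nabla}\big(\Phi(\alpha)\big)=(-1)^{k+1}\,d_{\nabla}\big(\Psi(\alpha)\big)$. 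Comparing the two yields $\Phi\circ d_{\mathcal{F}}=-\,d_{\nabla}\circ\Phi$, which is exactly the assertion that $\Phi$ is a chain map up to sign. The only point that needs any care—the \emph{main obstacle}, such as it is—is this sign bookkeeping: the honest chain-map property of $\tau\circ d_{1,0}$ becomes a sign-twisted one for $\Phi$ precisely because the prefactor $(-1)^{k+1}$ changes by $-1$ when the degree is raised by one. All the genuine content (the relation $d_{0,1}d_{1,0}+d_{1,0}d_{0,1}=0$ among the bihomogeneous components of $d$, and the compatibility of $\omega_C^{\flat}$ with the Bott connections) has already been absorbed into Lemmas~\ref{lem:des-phi} and~\ref{lem:chainmap} and \S\ref{subsub:fol}, so no further computation is required.
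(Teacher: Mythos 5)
Your proposal is correct and follows essentially the same route as the paper's own proof: both use the factorization $\Phi=(-1)^{k+1}\big(\text{Id}\otimes(\omega_C^{\flat})^{-1}\big)\circ\tau\circ d_{1,0}$ from Lemma~\ref{lem:des-phi}, the strict chain-map property of $\tau\circ d_{1,0}$ from Lemma~\ref{lem:chainmap}, and the compatibility of $\text{Id}\otimes(\omega_C^{\flat})^{-1}$ with $d_{\nabla^{*}}$ and $d_{\nabla}$ from \S\ref{subsub:fol}, and your sign bookkeeping matches the paper's computation exactly.
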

	\begin{proof}
	From the expression \eqref{eq:expressphi}, we get that for $\alpha\in\Omega^{k}(\mathcal{F})$, 
	\begin{align*}
		\Phi(d_{\mathcal{F}}\alpha)&=(-1)^{k}\left(\text{Id}\otimes(\omega_C^{\flat})^{-1}\right)\big(\tau(d_{1,0}(d_{\mathcal{F}}\alpha))\big)\\
		&=(-1)^{k}\left(\text{Id}\otimes(\omega_C^{\flat})^{-1}\right)\big(d_{\nabla^{*}}(\tau(d_{1,0}\alpha))\big)\\
		&=(-1)^{k}d_{\nabla}\left(\left(\text{Id}\otimes(\omega_C^{\flat})^{-1}\right)(\tau(d_{1,0}\alpha))\right)\\
		&=-d_{\nabla}(\Phi(\alpha)).
	\end{align*}
	Here we also used that $\text{Id}\otimes(\omega_C^{\flat})^{-1}$ intertwines $d_{\nabla^{*}}$ and $d_{\nabla}$, see \S\ref{subsub:fol}.
	\end{proof}
	
	Also item $ii)$ of Prop.~\ref{prop:rephrase-canonical} is a consequence of Lemma~\ref{lem:chainmap}.
	
	\begin{cor}\label{cor:transversediff}
	\begin{enumerate}
	\item The following is a chain map, up to sign:
		\[
		(-1)^{\bullet}(\tau\circ d_{1,0}):\big(\Omega^{\bullet}(\mathcal{F}),d_{\mathcal{F}}\big)\rightarrow\big(\Omega^{\bullet}(\mathcal{F};G^{*}),d_{\nabla^{*}}\big).
		\]
\item The map induced by $(-1)^{\bullet}(\tau\circ d_{1,0})$ in cohomology is canonical, since it coincides with the transverse differentiation map $
d_{\nu}:H^{\bullet}(\mathcal{F})\rightarrow H^{\bullet}(\mathcal{F};N^{*}\mathcal{F}).
$
\item The map $\Phi$ induces a canonical map in cohomology, namely
\[
\big[-\text{Id}\otimes(\omega_{C}^{\flat})^{-1}\big]\circ d_{\nu}:H^{\bullet}(\mathcal{F})\rightarrow H^{\bullet}(\mathcal{F};N\mathcal{F}).
\]
\end{enumerate}
\end{cor}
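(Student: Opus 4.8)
The plan is to obtain the three items in turn, deducing them from Lemma~\ref{lem:chainmap}, the formula for $\Phi$ in Lemma~\ref{lem:des-phi}, and the construction of $d_{\nu}$. Only item $(2)$ requires genuine work, and that work is a sign bookkeeping. For item $(1)$ I would simply invoke Lemma~\ref{lem:chainmap}, which shows that $\tau\circ d_{1,0}$ intertwines $d_{\mathcal{F}}$ and $d_{\nabla^{*}}$ with no sign. Since $\tau\circ d_{1,0}$ is degree-preserving as a map $\Omega^{k}(\mathcal{F})\to\Omega^{k}(\mathcal{F};G^{*})$, inserting the factor $(-1)^{\bullet}$ turns the intertwining relation into an anticommutation relation: for $\alpha\in\Omega^{k}(\mathcal{F})$ one gets $\big[(-1)^{\bullet}(\tau\circ d_{1,0})\big](d_{\mathcal{F}}\alpha)=-\,d_{\nabla^{*}}\big[(-1)^{\bullet}(\tau\circ d_{1,0})\big](\alpha)$, which is exactly the assertion that $(-1)^{\bullet}(\tau\circ d_{1,0})$ is a chain map up to sign. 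This is one line.

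Item $(2)$ is the core. I would fix a $d_{\mathcal{F}}$-closed $\alpha\in\Omega^{k}(\mathcal{F})$ and, using the identification $\Omega^{k}(\mathcal{F})=\Omega^{0,k}(C)$ from claim $i)$ in the proof of Lemma~\ref{lem:chainmap}, take $\widetilde{\alpha}:=\alpha\in\Omega^{0,k}(C)\subset\Omega^{k}(C)$ as the extension computing $d_{\nu}[\alpha]=[p(d\widetilde{\alpha})]$. Splitting $d\widetilde{\alpha}=d_{0,1}\alpha+d_{1,0}\alpha+d_{2,-1}\alpha$ and using that closedness means $d_{0,1}\alpha=0$, I would observe that $d\widetilde{\alpha}=d_{1,0}\alpha+d_{2,-1}\alpha\in\Omega^{1,k}(C)\oplus\Omega^{2,k-1}(C)\subset\Omega^{k+1}_{\mathcal{F}}(C)$, consistent with $d\widetilde{\alpha}$ representing $\mathfrak{d}[\alpha]$. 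Since $p$ extracts precisely the $\Omega^{1,k}$-component (one normal, $k$ leafwise arguments), for $V_1,\ldots,V_k\in\Gamma(T\mathcal{F})$ and $Y\in\Gamma(G)$ representing $\overline{Y}\in N\mathcal{F}$ I would compute
\[
\big\langle p(d\widetilde{\alpha})(V_1,\ldots,V_k),\overline{Y}\big\rangle=d\widetilde{\alpha}(V_1,\ldots,V_k,Y)=(-1)^{k}(d_{1,0}\alpha)(Y;V_1,\ldots,V_k)=(-1)^{k}\big\langle\tau(d_{1,0}\alpha)(V_1,\ldots,V_k),Y\big\rangle,
\]
where the middle equality reorders $Y$ to the front of the antisymmetric $(k{+}1)$-form at the cost of $(-1)^{k}$ to match the ``$G$-arguments first'' convention for the bihomogeneous component $d_{1,0}$. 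Hence $p(d\widetilde{\alpha})=(-1)^{k}\tau(d_{1,0}\alpha)$ under $N^{*}\mathcal{F}\cong G^{*}$, so $d_{\nu}[\alpha]=\big[(-1)^{k}\tau(d_{1,0}\alpha)\big]$, which is exactly the class induced by $(-1)^{\bullet}(\tau\circ d_{1,0})$.

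Item $(3)$ then follows formally. Lemma~\ref{lem:des-phi} gives the chain-level identity $\Phi=\big(-\text{Id}\otimes(\omega_C^{\flat})^{-1}\big)\circ\big[(-1)^{\bullet}(\tau\circ d_{1,0})\big]$; since $\text{Id}\otimes(\omega_C^{\flat})^{-1}$ is an isomorphism of complexes (\S\ref{subsub:fol}), it descends to cohomology, and inserting item $(2)$ yields $[\Phi]=\big[-\text{Id}\otimes(\omega_C^{\flat})^{-1}\big]\circ d_{\nu}$. The main obstacle is the sign and identification bookkeeping in item $(2)$: correctly matching the antisymmetric-form conventions built into $d$ and $p$ with the ``$G$-first'' conventions built into $d_{1,0}$ and $\tau$, and pinning down the factor $(-1)^{k}$ (together with the compatibility of the two copies of the Bott differential $d_{\nabla^{*}}$ on $G^{*}$ and on $N^{*}\mathcal{F}$). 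Once that is settled, items $(1)$ and $(3)$ are immediate consequences.
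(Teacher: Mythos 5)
Your proposal is correct and follows essentially the same route as the paper: item (1) is the one-line consequence of Lemma~\ref{lem:chainmap}, item (2) is obtained by computing $d_{\nu}[\alpha]$ with the extension-by-zero $j(\alpha)$ and identifying $p(d(j(\alpha)))$ with the $(1,k)$-bihomogeneous component $(-1)^{k}\tau(d_{1,0}\alpha)$ under $N^{*}\mathcal{F}\cong G^{*}$, and item (3) follows from item (2) together with the formula $\Phi=(-1)^{\bullet+1}\big(\text{Id}\otimes(\omega_{C}^{\flat})^{-1}\big)\circ(\tau\circ d_{1,0})$ of Lemma~\ref{lem:des-phi}. The only difference is that you make the sign bookkeeping $(-1)^{k}$ explicit, which the paper leaves implicit in its ``this immediately implies item (2)''.
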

\begin{proof}
For $\alpha\in\Omega^{k}(\mathcal{F})$, we have
\[
(-1)^{k+1}(\tau\circ d_{1,0})(d_{\mathcal{F}}\alpha)=-d_{\nabla^{*}}\left((-1)^{k}(\tau\circ d_{1,0})(\alpha)\right),
\]
which proves $(1)$. To prove item $(2)$, we will identify $N^{*}\mathcal{F}\cong G^{*}$. For a closed foliated form $\alpha\in\Gamma(\wedge^{k}T^{*}\mathcal{F})$, note that one can compute $d_{\nu}[\alpha]$ as follows. First extend $\alpha$ by zero on the complement $G$, so that we can view $\alpha$ as an element of $\Omega^{k}(C)$. Then pick the component of 
\[
d\alpha\in\Gamma(\wedge^{k+1}T^{*}\mathcal{F})\oplus\Gamma(\wedge^{k}T^{*}\mathcal{F}\otimes G^{*})\oplus\Gamma(\wedge^{k-1}T^{*}\mathcal{F}\otimes\wedge^{2}G^{*})
\]
lying in $\Gamma(\wedge^{k}T^{*}\mathcal{F}\otimes G^{*})$. This immediately implies item $(2)$. Item $(3)$ is now a consequence of item $(2)$ and the expression \eqref{eq:expressphi}.
\end{proof}
	
We now showed that the canonical map $d_{\nu}$ in cohomology is induced by a cochain map, which can be defined at the expense of choosing a complement $G$ to $T\mathcal{F}$. The cochain map in question is essentially the component $d_{1,0}$. This implies that the space $\ker(d_{\nu})$ is a term on the page $E_2$ of the spectral sequence of the foliation $\mathcal{F}$. This gives more insight into the space of first order deformations, see Def.~\ref{def:firstorderdefs}. We give the details in the following remark.

\begin{remark}
Given an arbitrary foliation $\mathcal{F}$, recall that its spectral sequence $\{E_k,d_k\}$ arises from a descending filtration of the de Rham complex, given by the spaces
\[
\Omega^{r}_{k}=\big\{\omega\in\Omega^{r}(C):\iota_{X_1\wedge\cdots\wedge X_{r-k+1}}\omega=0,\ \ \forall X_1,\ldots,X_{r-k+1}\in\Gamma(T\mathcal{F})\big\}.
\] 
Picking a complement $G$ to $T\mathcal{F}$, we see that $\Omega^{r}_{k}$ is the ideal in $\Omega^{r}(C)$ generated by $\Gamma(\wedge^{k}G^{*})$. The page $E_0$ has terms given by
\[
E_0^{u,v}=\Omega^{u+v}_{u}/\Omega^{u+v}_{u+1}\cong\Gamma(\wedge^{u}G^{*}\otimes\wedge^{v}T^{*}\mathcal{F}),
\]
and it is equipped with the differential $d_{0,1}$ defined in \eqref{eq:components}. More concisely, $E_0=\big(\Omega(C),d_{0,1}\big)$. The page $E_1$ is the cohomology of $E_0$, endowed with the differential induced by $d_{1,0}$, i.e.
\[
\big(H(\Omega(C),d_{0,1}),[d_{1,0}]\big).
\]
Using implicitly the identifications from the claim in Lemma \ref{lem:chainmap}, we see that the first two columns of the page $E_1$ look as follows:
	\begin{equation*}
		\begin{tikzcd}[row sep=small]
			&\vdots & \vdots\\
			&H^{2}(\mathcal{F})\arrow[r,"{[d_{1,0}]}"]&H^{2}(\mathcal{F};G^{*})\arrow[r,"{[d_{1,0}]}"]&\cdots\\
			&H^{1}(\mathcal{F})\arrow[r,"{[d_{1,0}]}"]&H^{1}(\mathcal{F};G^{*})\arrow[r,"{[d_{1,0}]}"]&\cdots\\
			&H^{0}(\mathcal{F})\arrow[r,"{[d_{1,0}]}"]&H^{0}(F;G^{*})\arrow[r,"{[d_{1,0}]}"]&\cdots  
		\end{tikzcd}
	\end{equation*}
The page $E_2$ consists of the cohomology groups of the complex $E_1$, i.e.
\[
H\left(H(\Omega(C),d_{0,1}),[d_{1,0}]\right).
\]
Also using Cor.~\ref{cor:transversediff}, we see in particular that
\begin{equation}\label{eq:page2}
	E^{0,1}_{2}\cong\ker\big(d_{\nu}:H^{1}(\mathcal{F})\rightarrow H^{1}(\mathcal{F};N^{*}\mathcal{F})\big). 
\end{equation}

	This point of view explains in a more conceptual way why the first order deformations in Def.~\ref{def:firstorderdefs} reduce in the integral case to those considered by Ruan \cite{ruan}. We checked this fact by direct computation in Ex.~\ref{ex:integral}. A simpler way is remarking that, in case $\mathcal{F}$ is given by the fibers of a fiber bundle, then the spectral sequence of $\mathcal{F}$ reduces to the Leray spectral sequence of $C\rightarrow C/\mathcal{F}$. Indeed, if $C$ is compact and $\mathcal{F}$ is given by a fiber bundle, then the Leray-Serre theorem \cite[Cor.~I.5.3]{hoster} states that
	\[
	E^{0,1}_{2}\cong H^{0}\big(C/\mathcal{F};\mathcal{H}^{1}\big),
	\]
	where $\mathcal{H}^{1}$ is the flat vector bundle defined in \eqref{eq:vb}. Therefore, $E^{0,1}_{2}$ can be identified with the space of flat sections of $(\mathcal{H}^{1},\nabla)$, showing that Def.~\ref{def:firstorderdefs} is consistent with Ruan's work.
	
\end{remark}

\subsubsection{The proof of Lemma~\ref{lem:extension}}
At last, we give the proof of Lemma~\ref{lem:extension}. It relies on the results proved in \S\ref{subsubsec:can}, namely on the proof of Lemma~\ref{lem:chainmap} and on Cor.~\ref{cor:transversediff}.

	\begin{proof}[Proof of Lemma~\ref{lem:extension}]
		Fix a complement $G$ to $T\mathcal{F}$. By the claim in the proof of Lemma~\ref{lem:chainmap} and Cor.~\ref{cor:transversediff} $(2)$, we have to show that the following two statements are equivalent:
		\begin{enumerate}
			\item $d_{0,1}\beta=0$ and $d_{1,0}\beta$ is exact in $\big(\Omega^{1,\bullet}(C),d_{0,1}\big)$.
			\item There exists an extension $\widetilde{\beta}\in\Omega^{1}(C)$ of $\beta$ such that $\iota_{V}d\widetilde{\beta}=0$ for all $V\in\Gamma(T\mathcal{F})$.
		\end{enumerate}
		For one implication, assume that $d_{0,1}\beta=0$ and that $d_{1,0}\beta=d_{0,1}\gamma$ for some $\gamma\in\Gamma(G^{*})$. Then $\widetilde{\beta}:=\beta-\gamma\in\Gamma(T^{*}\mathcal{F}\oplus G^{*})$ is an extension of $\beta$ satisfying
		\begin{align*}
			d\widetilde{\beta}&= d_{0,1}\beta+d_{1,0}\beta+d_{2,-1}\beta -d_{0,1}\gamma-d_{1,0}\gamma\\
			&=d_{2,-1}\beta-d_{1,0}\gamma.
		\end{align*}
		The latter belongs to $\Gamma(\wedge^{2}G^{*})$, and therefore $\iota_{V}d\widetilde{\beta}=0$ for all $V\in\Gamma(T\mathcal{F})$.
		
		For the converse, we can write the given extension as $\widetilde{\beta}=\beta+\gamma$ for some $\gamma\in\Gamma(G^{*})$. Decomposing $d\widetilde{\beta}$ in the direct sum
		\[
		\Omega^{2}(C)=\Gamma(\wedge^{2}T^{*}\mathcal{F})\oplus\Gamma(G^{*}\otimes T^{*}\mathcal{F})\oplus\Gamma(\wedge^{2}G^{*}),
		\]
		we have
		\[
		d\widetilde{\beta}=d_{0,1}\beta+\big(d_{1,0}\beta+d_{0,1}\gamma\big)+\big(d_{2,-1}\beta+d_{1,0}\gamma\big).
		\]
		Since by assumption, the components in $\Gamma(\wedge^{2}T^{*}\mathcal{F})$ and $\Gamma(G^{*}\otimes T^{*}\mathcal{F})$ vanish, we get
		\[
		d_{0,1}\beta=0\ \ \text{and}\ \ d_{1,0}\beta=-d_{0,1}\gamma.\qedhere
		\]
	\end{proof}

	\subsection{A computation in the Gotay local model}
	\vspace{0.1cm}
	\noindent
	
	Given a coisotropic submanifold $C\subset(M,\omega)$ with characteristic foliation $\mathcal{F}$, consider the Gotay local model $(U,\Omega_G)$ associated with a choice of splitting $TC=T\mathcal{F}\oplus G$. Throughout this paper, we frequently identified vertical fiberwise constant vector fields on $U$ with sections of $T^{*}\mathcal{F}$. In Lemma~\ref{dF} below, we give the explicit formulae underlying this correspondence. This result is essentially \cite[Lemma~4.4]{equivalences}, up to an additional minus sign. To justify this difference, we include a detailed proof in coordinates.

	As before, $p:U\rightarrow C$ denotes the projection and  $j:T^{*}\mathcal{F}\hookrightarrow T^{*}C$ the inclusion induced by $G$. Also, $\Pi_G=-\Omega_G^{-1}$ is the Poisson structure corresponding with $\Omega_G$, $i:C\hookrightarrow U$ is the inclusion map and $r:T^{*}C\rightarrow T^{*}\mathcal{F}$ is the restriction to the leaves of $\mathcal{F}$.

	\begin{lemma}\label{dF}
		There is a natural correspondence
		\[
		\Gamma(T^{*}\mathcal{F})\rightarrow\mathfrak{X}_{vert.const.}(U):\beta \mapsto \Pi_{G}^{\sharp}(p^{*}(j(\beta)))
		\]
		with inverse
		\[
		\mathfrak{X}_{vert.const.}(U)\rightarrow\Gamma(T^{*}\mathcal{F}):V\mapsto -r\left(i^{*}\Omega_{G}^{\flat}(V)\right).
		\]
	\end{lemma}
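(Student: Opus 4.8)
The plan is to identify the forward map $\beta \mapsto \Pi_{G}^{\sharp}(p^{*}(j(\beta)))$ with the tautological correspondence between sections of $T^{*}\mathcal{F}$ and vertical fiberwise constant vector fields, and then to read off the inverse from the relation $\Pi_{G}^{\sharp} = -(\Omega_{G}^{\flat})^{-1}$ coming from $\Pi_{G}=-\Omega_{G}^{-1}$. For $\beta \in \Gamma(T^{*}\mathcal{F})$, let $V_{\beta} \in \mathfrak{X}_{vert.const.}(U)$ denote its vertical lift, i.e. the vector field whose flow is fiberwise translation $\lambda \mapsto \lambda + t\,\beta(p(\lambda))$ on $T^{*}\mathcal{F}$; the assignment $\beta \mapsto V_{\beta}$ is manifestly a bijection onto $\mathfrak{X}_{vert.const.}(U)$. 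The entire lemma will follow once I establish the single identity
\[
\iota_{V_{\beta}}\Omega_{G} = -\,p^{*}(j(\beta)).
\]

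To prove this identity I would argue intrinsically, avoiding coordinates. Writing $\Omega_{G} = p^{*}\omega_{C} - d\theta_{\mathcal{F}}$, where $\theta_{\mathcal{F}} := j^{*}\theta_{can}$ is the tautological one-form of $T^{*}\mathcal{F}$ transported through $j$ (recall $j^{*}\omega_{can}=-d\theta_{\mathcal{F}}$, as in the proof of Prop.~\ref{F}), I note that $V_{\beta}$ is vertical, hence $p_{*}V_{\beta}=0$. This gives $\iota_{V_{\beta}}p^{*}\omega_{C} = 0$ and $\iota_{V_{\beta}}\theta_{\mathcal{F}} = 0$, the latter because the tautological one-form annihilates vertical vectors. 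Cartan's formula then reduces the computation to a Lie derivative: $\iota_{V_{\beta}}\Omega_{G} = -\iota_{V_{\beta}}d\theta_{\mathcal{F}} = -\mathcal{L}_{V_{\beta}}\theta_{\mathcal{F}}$. Since the flow of $V_{\beta}$ is fiber translation by $\beta$, the defining property of $\theta_{\mathcal{F}}$ applied to the shifted section yields the transformation law $\theta_{\mathcal{F}} \mapsto \theta_{\mathcal{F}} + t\,p^{*}(j(\beta))$; differentiating at $t=0$ gives $\mathcal{L}_{V_{\beta}}\theta_{\mathcal{F}} = p^{*}(j(\beta))$, and the identity follows.

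With the identity in hand, both halves of the lemma are immediate, and notably the computation is global on $U$, so no separate verification of verticality or fiberwise-constancy off the zero section is needed. First, the identity rearranges to $\Pi_{G}^{\sharp}(p^{*}(j(\beta))) = -(\Omega_{G}^{\flat})^{-1}(p^{*}(j(\beta))) = V_{\beta}$, so the forward map is exactly $\beta \mapsto V_{\beta}$, which indeed lands in and bijects onto $\mathfrak{X}_{vert.const.}(U)$. Second, using $p \circ i = \mathrm{Id}_{C}$ together with $r(j(\beta)) = \beta$ (restricting $j(\beta)$ to the leaves recovers $\beta$), I compute
\[
-\,r\big(i^{*}\Omega_{G}^{\flat}(V_{\beta})\big) = -\,r\big(i^{*}(-p^{*}(j(\beta)))\big) = r\big(i^{*}p^{*}(j(\beta))\big) = r(j(\beta)) = \beta,
\]
so $V \mapsto -r(i^{*}\Omega_{G}^{\flat}(V))$ inverts the forward map on each $V_{\beta}$, hence on all of $\mathfrak{X}_{vert.const.}(U)$ since every such field is some $V_{\beta}$.

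The main obstacle is pinning down signs, which is precisely what the lemma is about (the paper flags an ``additional minus sign'' relative to \cite{equivalences}). I would track three conventions that all feed into the final sign: the relation $\Pi_{G}^{\sharp} = -(\Omega_{G}^{\flat})^{-1}$ from $\Pi_{G} = -\Omega_{G}^{-1}$; the sign in $\Omega_{G} = p^{*}\omega_{C} - d\theta_{\mathcal{F}}$; and the transformation law of $\theta_{\mathcal{F}}$ under fiber translation. As a cross-check, and to match the paper's stated preference for an explicit coordinate argument, I would recompute in a foliated chart $(y^{\mu},z^{a})$ with $T\mathcal{F} = \mathrm{span}\{\partial_{y^{\mu}}\}$ and fiber coordinates $\xi_{\mu}$ dual to the coframe $\vartheta^{\mu}$ adapted to $G$: there one finds $\Pi_{G}^{\sharp}(p^{*}(j(\beta))) = \beta_{\mu}\partial_{\xi_{\mu}}$ and $-r(i^{*}\Omega_{G}^{\flat}(\beta_{\mu}\partial_{\xi_{\mu}})) = \beta_{\mu}\vartheta^{\mu}$, confirming both the correspondence and the sign.
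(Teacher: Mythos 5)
Your proof is correct, and it takes a genuinely different route from the paper's. The paper proves Lemma~\ref{dF} by a purely coordinate computation: it fixes foliated coordinates $(q_i)$ with conjugate fiber coordinates $(y_i)$, writes the complement $G$ as the graph of a fiberwise linear map $\Psi$, computes the inclusion $j$ and then $j^{*}\omega_{can}$ explicitly, and reads off $-\Omega_G^{\flat}\big(\sum_i g_i\partial_{y_i}\big)=p^{*}\big(j\big(\sum_i g_i\,dq_i\big)\big)$ from the resulting coordinate expression of $\Omega_G$ --- the paper states it does this deliberately, to pin down the sign discrepancy with \cite[Lemma~4.4]{equivalences}. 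You instead reduce everything to the single global identity $\iota_{V_{\beta}}\Omega_G=-p^{*}(j(\beta))$, proved intrinsically from $\Omega_G=p^{*}\omega_C-d(j^{*}\theta_{can})$, verticality of $V_\beta$, Cartan's formula, and the transformation law of $j^{*}\theta_{can}$ under fiber translations. That identity is indeed correct (note it matches the paper's coordinate outcome exactly, since $V_\beta=\sum_i g_i\partial_{y_i}$ for $\beta=\sum_i g_i\,dq_i$), and your deduction of both directions of the correspondence from it --- using $\Pi_G^{\sharp}=-(\Omega_G^{\flat})^{-1}$, $p\circ i=\mathrm{Id}_C$, and $r(j(\beta))=\beta$, plus the observation that the vertical lift $\beta\mapsto V_\beta$ is already a bijection onto $\mathfrak{X}_{vert.const.}(U)$ --- is complete. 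Your approach buys brevity, coordinate-independence, and a statement valid on all of $U$ rather than only along $C$; the paper's computation buys an explicit adapted-coordinate formula for $\Omega_G$ and a mechanical, convention-proof verification of the sign, which was its stated goal. One small point worth making explicit in your write-up: the transformation law $\phi_t^{*}\theta_{\mathcal{F}}=\theta_{\mathcal{F}}+t\,p^{*}(j(\beta))$ uses that $j$ is fiberwise linear, so that fiber translation by $t\beta$ on $T^{*}\mathcal{F}$ intertwines through $j$ with fiber translation by $t\,j(\beta)$ on $T^{*}C$, where the standard cotangent-bundle law applies; this is a one-line check, but it is the place where the "defining property" of the tautological form actually enters.
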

	\begin{proof}
		Choose coordinates $(q_1,\ldots,q_k,q_{k+1},\ldots,q_n)$ on $C$ such that $T\mathcal{F}=\text{Span}\{\partial_{q_1},\ldots,\partial_{q_k}\}$. Let $(y_1,\ldots,y_n)$ denote the conjugate coordinates on $T^{*}C$. We have to show that the correspondence described above reads as follows in these coordinates:
		\begin{equation}\label{checkcorr}
			\sum_{i=1}^{k}g_i(q)dq_i\leftrightarrow\sum_{i=1}^{k}g_i(q)\partial_{y_i}.
		\end{equation}
		We start by expressing the symplectic form $\Omega_G$ in coordinates.
		Since $G$ is transverse to the first summand in the decomposition
		\[
		TC=\text{Span}\{\partial_{q_1},\ldots,\partial_{q_k}\}\oplus\text{Span}\{\partial_{q_{k+1}},\ldots,\partial_{q_n}\},
		\]
		there exists a fiberwise linear map $\Psi$ such that
		\begin{align*}
			G&=\text{Graph}\left(\Psi:\text{Span}\{\partial_{q_{k+1}},\ldots,\partial_{q_n}\}\rightarrow\text{Span}\{\partial_{q_1},\ldots,\partial_{q_k}\}\right)\\
			&=\text{Span}\left\{\partial_{q_{k+1}}+\Psi(\partial_{q_{k+1}}),\ldots,\partial_{q_n}+\Psi(\partial_{q_n})\right\}.
		\end{align*}
		Let us write for $l=k+1,\ldots,n$:
		\[
		\Psi(\partial_{q_l})=\sum_{i=1}^{k}f_{l}^{i}(q)\partial_{q_i}.
		\]
		We then obtain for $i=1,\ldots,k$ that
		\begin{align*}
			\Big\langle j(dq_i),\sum_{l=1}^{n}h_l(q)\partial_{q_l}\Big\rangle&=\Big\langle dq_i,\sum_{l=1}^{k}h_l(q)\partial_{q_l}-\sum_{l=k+1}^{n}h_l(q)\Psi(\partial_{q_l})\Big\rangle\\
			&=\Big\langle dq_i,\sum_{l=1}^{k}h_l(q)\partial_{q_l}-\sum_{l=k+1}^{n}\sum_{\alpha=1}^{k}f_{l}^{\alpha}(q)h_l(q)\partial_{q_{\alpha}}\Big\rangle\\
			&=h_i(q)-\sum_{l=k+1}^{n}f_{l}^{i}(q)h_l(q).
		\end{align*}
		This shows that
		\[
		j(dq_i)=dq_i-\sum_{l=k+1}^{n}f_l^{i}(q)dq_l,
		\]
		and therefore
		\[
		j\left(\sum_{i=1}^{k}y_idq_i\right)=\sum_{i=1}^{k}y_idq_i-\sum_{l=k+1}^{n}\left(\sum_{i=1}^{k}y_if_{l}^{i}(q)\right)dq_l.
		\]
		Hence, expressing the map $j:T^{*}\mathcal{F}\hookrightarrow T^{*}C$ in coordinates gives
		\[
		j(q_1,\ldots,q_n,y_1,\ldots,y_k)=\left(q_1,\ldots,q_n,y_1,\ldots,y_k,-\sum_{i=1}^{k}y_if_{k+1}^{i}(q),\ldots,-\sum_{i=1}^{k}y_if_{n}^{i}(q)\right).
		\]
		It follows that
		\begin{align*}
			j^{*}\omega_{can}&=\sum_{i=1}^{k}dq_i\wedge dy_i - \sum_{l=k+1}^{n}dq_l\wedge d\left(\sum_{i=1}^{k}y_if_l^{i}(q)\right)\\
			&=\sum_{i=1}^{k}dq_i\wedge dy_i-\sum_{l=k+1}^{n}\sum_{i=1}^{k}f_l^{i}(q)dq_l\wedge dy_i -\sum_{l=k+1}^{n}\sum_{i=1}^{k}y_i dq_l\wedge df_{l}^{i}(q),
		\end{align*}
		hence the symplectic form $\Omega_G$ looks like
		\[
		\Omega_G=\sum_{k+1\leq i<j\leq n}\omega_{ij}(q)dq_i\wedge dq_j+\sum_{i=1}^{k}dq_i\wedge dy_i-\sum_{l=k+1}^{n}\sum_{i=1}^{k}f_l^{i}(q)dq_l\wedge dy_i -\sum_{l=k+1}^{n}\sum_{i=1}^{k}y_i dq_l\wedge df_{l}^{i}(q).
		\]
		It is now clear that the correspondence described in the statement of the lemma acts as required in \eqref{checkcorr}, since we have
		\begin{align*}
			-\Omega_{G}^{\flat}\left(\sum_{i=1}^{k}g_i(q)\partial_{y_i}\right)&=\sum_{i=1}^{k}g_i(q)dq_i-\sum_{l=k+1}^{n}\left(\sum_{i=1}^{k}g_i(q)f_l^{i}(q)\right)dq_l\\
			&=p^{*}\left(j\left(\sum_{i=1}^{k}g_i(q)dq_i\right)\right).\qedhere
		\end{align*}
	\end{proof}

	\begin{remark}\label{rem:corres}
		The correspondence from Lemma~\ref{dF} extends to foliated differential forms and vertical fiberwise constant multivector fields. Explicitly, it is given by
		\[
		\Gamma(\wedge^{k}T^{*}\mathcal{F})\rightarrow\mathfrak{X}^{k}_{vert.const.}(U):\beta\mapsto(\wedge^{k}\Pi_G^{\sharp})(p^{*}(j(\beta)))
		\]
		with inverse
		\[
		\mathfrak{X}^{k}_{vert.const.}(U)\rightarrow\Gamma(\wedge^{k}T^{*}\mathcal{F}):V\mapsto(-1)^{k}r\left(i^{*}(\wedge^{k}\Omega_G^{\flat})(V)\right).
		\]
	\end{remark}
	
	\section*{Declarations}
	
	\subsection*{Funding}
	During the preparation of this article, the author was funded by the Max Planck Institute for Mathematics in Bonn and the UCL Institute for Mathematics and Statistical Science (IMSS).
	
	\subsection*{Competing interests}
	The author has no competing interests to declare that are relevant to the content of this article.

	\subsection*{Data availability statement}
	Data sharing not applicable to this article as no datasets
	were generated or analysed during the current study.

\end{document}